\theoremstyle{thmstyleone}%
\newtheorem{theorem}{Theorem}
\newtheorem{proposition}[theorem]{Proposition}%
\theoremstyle{plain}%
\newtheorem{remark}{Remark}%
\newtheorem{notation}{Notation}%
\newtheorem{corollary}{Corollary}%
\newtheorem{lemma}[theorem]{Lemma}
\theoremstyle{thmstylethree}%
\newtheorem{definition}{Definition}%
\DeclareMathOperator*{\Argmax}{Argmax}
\renewcommand{\eqref}[1]{\textup{(\ref{#1})}}
\begin{document}

\title[Article Title]{A cartel-fringe model in a nonrenewable resource economy with many fringe firms extracting from a common deposit}

\renewcommand{\orcidlogo}{\includegraphics[width=.8em]{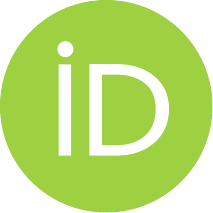}}

\author*[1]{\fnm{Agnieszka} \sur{Wiszniewska-Matyszkiel}}\email{agnese.mimuw@edu.pl}\equalcont{The authors contributed equally to this work.}

\author[2]{\fnm{Maciej} \sur{Wrona}}\email{ms.wrona@student.uw.edu.pl}
\equalcont{The authors contributed equally to this work.}

\affil[1]{\orgdiv{Institute of Applied Mathematics and Mechanics}, \orgname{University of Warsaw}, \orgaddress{\street{Banacha 2}, \postcode{02-097}, \city{Warsaw}, \country{Poland}}}

\affil[2]{\orgdiv{Faculty of Economic Sciences}, \orgname{University of Warsaw}, \orgaddress{\street{Długa 44/50}, \postcode{00-241}, \city{Warsaw}, \country{Poland}}}


\abstract{We study a model of a nonrenewable resource market, e.g. crude oil market. This market consists of a cartel with market power and a fringe consisting of many small firms, whose deposits are interrelated. In addition, the firms face constraints on extraction. Besides the nonrenewable resource, there is also its sustainable substitute, which constrains the price. We fully characterize the resulting Stackelberg equilibrium. Besides typical solution, in which initially the cartel and fringe extract simultaneously, we find that for some model parameters and initial capacities, the cartel may also deter the fringe from extraction, or it may refrain from extraction until the fringe depletes their deposit.
We conduct sensitivity analysis and study the conditions when one of those counterintuitive solutions is optimal. \\
\textbf{AMS subject classification}: 91A10, 91A23, 91B76 }

\keywords{nonrenewable resources, cartel and fringe, dynamic optimization, Stackelberg equilibrium}



\maketitle

\section{Introduction}\label{s_introduction}

Researchers modeling oil market most often assume that there are two agents in it: a cartel with monopolistic power and many small producers referred to as \emph{the fringe}. In such works, the fringe is both treated as one player---a single ``representative agent''---and it is a price taker: in their optimization, they treat the trajectory of oil prices as independent of their choices. In fact, those two assumptions together imply irrationality of the fringe. Moreover, this ``representative agent'' approach makes modeling the actual interactions within the group of small extractors impossible. In order to fully resolve the contradiction between rationality and price taking behavior, a continuum of fringe firms has to be considered instead. Moreover, it gives additional insight into the interaction within this group. One of such interactions, which we are going to focus on in our paper is when, despite the fringe firms having separate land and separate wells, all of them extract from a common deposit. Although, for the sake of brevity, we refer to the nonrenewable resource as \emph{oil} when discussing results and assumptions, our model is not restricted to oil, as similar interactions are inherent for, e.g.,  groundwater or water from deep deposits from the same aquifer. 

To be able to encompass all those aspects: lack of contradiction between and price-taking behavior of the fringe and its rationality, market power of the cartel and interactions among fringe firms, we model the market as a large game \citep[see e.g.][]{KALAI201859, SHITOVITZ1974458} with the cartel as an atom and the fringe as a continuum of small players. Such ''mixed markets'' constitute a convenient way to model imperfect competition.

The first to use a cartel-fringe model was \cite{salant1976exhaustible}, proposing a market model with one big cartel and competing fringe extractors. Salant has assumed that the oil producers' market is a ''dominant extractor'' market: it consists of a cartel, that is, a player proposing a price trajectory and production level in order to maximize profit, and a price-taking fringe, treated as one player.
He has proved that during the game, the cartel eventually becomes the sole entity in the oil market. In other words, it establishes a level of resource extraction such that the fringe players first deplete their resources. Moreover, he has justified the price-taking behavior of the fringe members results from the fact that it is the limiting behavior of a strongly asymmetric Cournot model. To do this, he assumed that constant fringe resources are divided equally among an increasing number of fringe members. In fact, the limit is equivalent to obtaining an atomic player and a continuum of nonatomic players. We will return to this concept in this paper.
Later, \cite{gilbert1978dominant} has proved that if the marginal cost of oil extraction by a collective of smaller extractors is constant, then the optimal cartel strategy is independent of the extraction costs of their resource and depends on the level of marginal cost of extraction by the fringe. He has also proved that the market price of oil steadily increases until the fringe depletes its resources. Along with \cite{newbery1981oil} and \cite{groot1992,groot2000open}, they have considered a case where the cartel is a leader (e.g. he has a first mover advantage) and fringe act as followers. Furthermore, \cite{Lewis1980} and \cite{loury1986} have also justified the assumption that the fringe is a price-taker. Furthermore, \cite{groot2000open} have obtained that the equilibrium price trajectory can be discontinuous.

From the game-theoretic point of view, cartel-versus-fringe models equilibria are Stackelberg equilibria, as the cartel maximizes payoff provided that the fringe best responds to its strategy, while each of fringe members just best responds to the cartel strategy and the strategies of the remaining fringe members. Given a strategy of the leader, the decision-making process of the fringe is a large game.  As mentioned before, a collective of identical fringe members is often modeled as one ``representative'' firm. This corresponds to the idea of \cite{HART1974453} that in a cooperative game such a behavior is a stable von Neumann-Morgenstern solution. Nevertheless, first, it requires that the fringe members cooperate and they all obey to the agreed strategy, optimal for their collective. Secondly, in such an approach, such a ``representative fringe firm'' is equivalent to another cartel, competing with the original cartel, as two atomic firms. They differ by size, but each of them has nonnegligible market power. If we consider a cartel-versus-fringe interaction with the fringe treated as one decision maker neglecting their influence on the price into account, then such a behavior may seem inconsistent. As the oil price has to equal the inverse demand, which also depends on the fringe decision, neglecting this influence in this case is irrational. This influence decreases as the abstract one entity called ``the fringe'' is decomposed into more and more actual small extractors, but as long as the number of fringe extractors is finite, some of this irrationality remains, which makes mathematical modeling problematic. To this end, we consider a large mixed market, as in \cite{SHITOVITZ1974458}, with the fringe modeled as a continuum  and the cartel as a single atomic player. This approach makes the fringe decisions consistent with reality.  Markets with a continuum of players, first introduced by \cite{aumann1964markets}, extended to games with continuum of players by e.g. \cite{schmeidler1973equilibrium} and then studied by e.g. \cite{KHAN199713}, have been introduced to resolve the aforementioned inconsistency, first for a competitive market. Later on, mixed large games have been introduced with both a continuum of small agents and some atomic players. Another extension of this primarily static concept are dynamic problems with large sets of players. Currently, they are sometimes dubbed mean-field games, analogously to mean-field models in physics, as, for instance, in the work of \cite{lasry2007mean} or \cite{chenavaz2021dynamic}. Properties of equilibria in dynamic games with a continuum of players that we need for our model were derived in \cite{wiszniewska2002static,wiszniewska2014open}.

The problem of cartel-versus-fringe models with representative fringe firms was thoroughly studied by Benchekroun and coauthors. They have solved their models by finding open-loop equilibrium strategies depending on the prevailing market phase: whether agents extract their resources simultaneously or only one of them sells at a specified price. \cite{BENCHEKROUN20091867} have given a full characterization of such a cartel-versus-fringe equilibrium. Furthermore, they have proved that at a market equilibrium, there is almost always a phase in which both players simultaneously extract their resources. \cite{BENCHEKROUN20191} have calculated the optimal control for both the cartel and the fringe in each phase. Consequently, they have determined the optimal depletion timing for the resources of both agents in the oil market. Besides, \cite{benchekroun2012price} derive the Nash equilibrium for the cartel-versus-fringe game with one cartel entity and the number of fringe extractors going to infinity and each of them having a separate deposit, both in open-loop and feedback strategies. A Stackelberg equilibrium in feedback strategies was also considered in \cite{GROOT2003287}.

The possibility that the miners extract a common pool deposit has already been considered in  \cite{feichtinger2024optimal}. Nevertheless, the authors have considered Nash equilibrium only, with $n$ symmetric firms. The common-pool situation has been compared with the situation of separate deposits, and the possibility of drilling new wells has been added.

For extensive reviews on resource economics, see, e.g., \cite{van2012applications,long2011dynamic,long2016resource}.
\medskip

In this paper, we consider a differential game of oil extraction closely related to \cite{BENCHEKROUN2020102384}. Typically, the model of \cite{BENCHEKROUN2020102384} considers
one cartel and a set of fringe members treated as one player. The control parameters of the fringe player in \cite{BENCHEKROUN2020102384} are unconstrained. Therefore, depending on cartel's choices, the supremum of the Hamiltonian function associated with
the fringe may be infinite, or the point at which the maximum is attained may be nonunique, or the fringe does not exploit. Thus, finding the optimal control of the fringe is problematic. \cite{BENCHEKROUN2020102384} solves this problem assuming that cartel chooses the control always leading to nonunique maximum of the fringe and choosing the fringe strategy such that the price is a continuous function. The equilibrium assumption is, consequently, that the price resulting from the fringe's choice is exactly the one that leads to the nonuniqueness. 

We modify the model of \cite{BENCHEKROUN2020102384} in three ways. Firstly, we impose constraints on the control parameters, which can be binding for the fringe. 
Secondly, we decompose the abstract fringe into many small companies. Thirdly, we assume that they exploit the same oil deposit using separate wells, as in one of the cases considered in \cite{feichtinger2024optimal}. Finally, we skip the assumption that price must be continuous, similarly to \cite{groot2000open}.

The structure of this paper is as follows. In Section \ref{s_formulation}, we present the model and formulate Stackelberg equilibrium. In section \ref{s_krytyka}, we discuss differences from the closest literature. 
In Section \ref{s_analysis}, we derive properties of the fringe reaction function to cartel's strategies, and calculate the optimal control for the cartel in several classes of strategies. Thus, we solve the optimization problem of the cartel by finding the maximum net profit in each class. After that, we present a graphical analysis within which we discuss how changing of various parameters influences the value of previously found discounted net profits. For transparency of exposition, more technical proofs and optimal control tools are in the Appendix.

\section{Formulation of the model}\label{s_formulation}

We consider a model closely related to \cite{BENCHEKROUN2020102384}. Similarly, we study a nonrenewable resource game between a coherent cartel and a group of fringe members. We start from open-loop strategies, although finally we conclude also about closed-loop or feedback strategies. Similarly, the cartel has monopolistic power, and it acts as a price maker, determining the market price, while fringe extractors are price takers, i.e., they treat the price as given. Similarly, the market presented in this study consists of two types of goods: non-renewable resources (e.g. oil) and its renewable perfect substitute. However, we obtain price-taking behavior of the fringe members as a result, not an assumption. We get it by treating each fringe member as an independent decision-maker. In our problem, each fringe extractor has their own well from which they extract oil, but all wells are drilled in the same oil deposit. The cartel extracts from a separate deposit. Due to a large number of fringe extractors, one should think of each of them as having infinitesimal influence on the joint resource level. 

As in \cite{BENCHEKROUN2020102384}, deposits or types of resource differ, and extracting from the deposit of the cartel is cheaper, with constant marginal costs. In case of oil, those two types may be ''conventional'' oil and  shale oil. The extraction cost is lower for the conventional oil, possessed by the cartel, and higher for the shale oil, extracted by the fringe. 
We use $k^f$ for the marginal cost of extraction from the fringe deposit and $k^c$ for the marginal extraction cost from mines belonging to the cartel. The \emph{initial stock of resources of the fringe} are $S_0^f$ and the \emph{initial stock of resources of the cartel} are $S_0^c$. The \emph{control parameters} for both fringe and cartel are \emph{extraction intensities} of the non-renewable resource: $q^f_i\in [0,m_i^f]$ for ${i \in F}$, where $F=[0,1]$ and $q^c\in \mathbb [0,M^c]$, respectively, where $m_i^f$, $M^c$ are intensity constraints of the firms, respectively. We assume that the function $i \mapsto m_i^f$ is measurable. We also assume that $M^c$ is large (such that a dumping strategy is possible and interior). This is the next essential difference from the original model, which had no constraints for extraction intensities. We consider open-loop controls $q^f_i:\mathbb R_+\to [0,m_i^f]$ for ${i \in F}$ and $q^c:\mathbb R_+\to \mathbb [0,M^c]$, respectively. We assume that strategy profiles of the fringe members $\left[q^f_i\right]_{i \in F}$,  are jointly measurable in $(i,t)$,  and $q^c$ is measurable. The resulting \emph{set of fringe strategy profiles} is denoted by $\mathbb S^f$ while \emph{set of cartel strategies} by $\mathbb S^c$. For technical reasons, we shall need the \emph{set of fringe decision profiles}, i.e., measurable functions $q^f: F\to \mathbb R_+$ with $q^f(i)\in [0,m_i^f]$. For simplicity, we write the player index $i$ in the subscript and write $q^f_i$.

We further denote the aggregate of a fringe strategy or decision profile by $Q^f$:
\begin{equation}\label{eq_Q^f_def}
    Q^f := \int\limits_F q_i^f di,
\end{equation}
where the integral is with respect to the Lebesgue measure. 

We denote the resulting set of aggregates of fringe decision profiles by $\overline {\mathbb D^f}$, while the set of aggregates of the fringe strategy profiles is denoted by $\overline {\mathbb S^f}$.

The controls
$    q_i^f$ (via their aggregate $Q^f$) and $q^c$ 
determine absolutely continuous trajectories of state variables $S^f:\mathbb{R}_+ \rightarrow \mathbb{R}$ for the fringe and $ S^c:\mathbb{R}_+ \rightarrow \mathbb{R}$ for the cartel. They are given by
\begin{equation}\label{zmiana_st_f}
    \dot{S}^f(t)=-Q^f(t) \text{ with } S^f(0)=S^f_0 
\end{equation}
and
\begin{equation}\label{zmiana_st_c}
    \dot{S}^c(t)=-q^c(t) \text{ with }S^c(0)=S^c_0.
\end{equation}

The state variables represent the amounts of respective resources left. 
The time when the fringe depletes its resources is denoted by $T^f$, while the time when the cartel depletes its resources is denoted by $T^c$.

Following \cite{BENCHEKROUN2020102384}, we assume that there is a potential perfect substitute for crude oil, which is renewable and produced at the constant\footnote{Compare with \cite{van2012there} for assuming a constant marginal cost of a backstop energy source.} marginal cost $b>0$. 
By slight abuse of notation, we assume that $b$ is the maximal price at which the production of the substitute is not switched on, which can be related to a very small but present set-up cost. We abstract from technological progress \citep[cf.][]{fischer2017balancing}. Let $p(t)$ be the price of crude oil at time $t$. A perfect renewable substitute for crude oil in the market imposes an upper bound on selling price, $b$, corresponding to the marginal cost of production of the perfectly substitutable good in place of oil. So, if $p(t)$ is set above $b$, then at time $t$ the demand for crude oil is equal to zero. Below, the inverse demand is linear, which we can write as 
\begin{equation}\label{eq_cena}
p(t)=\min\left\{\alpha-\beta\left(Q^f(t)+q^c(t)\right), b\right\},  
\end{equation}
where $\alpha,\beta>0$. 

We denote the discount rate by $r>0$. Moreover, we assume that $k^c<k^f<b<\alpha$ and that sharing the market with fringe extracting maximally always results in a price below $b$, which we obtain by $\alpha - \beta M^f<b$.


The payoff functions in the game are denoted by $J^c$ and $J^f_i$, respectively.

For fringe member $i$, their arguments are $(q^c,q_i^f,[q_j^f]_{j\in F\setminus\{i\}})$. 
So, the aim of each fringe member is to \emph{maximize} $$J_i^f(q^c,p,q_i^f,[q_j^f]_{j\in F\setminus\{i\}})=\int_0^{\infty}e^{-rt}(p(t)-k^f)q^f_idt,$$ which can be reduced to 
$J_i^f(q^c,q^f_i,Q^f)=\int_0^{\infty}e^{-rt}(p(t)-k^f)q^f_idt$\\
with $p(t)$ is given by \eqref{eq_cena}).

For the cartel, the arguments are $(q^c,Q^f)$, so the cartel \emph{maximizes}
$$J^c(q^c,Q^f)=\int_0^{\infty}e^{-rt}\left(\alpha-\beta(Q^f(t)+q^c(t))-k^c\right)q^c(t)dt.$$

Next, we define the main concept that we consider in our market model---Stackelberg equilibrium with multiple followers.

\begin{definition}\label{def_stackelberg}
\begin{enumerate}[(a)]
\item For arbitrary strategy of the leader $q^c\in \mathbb S^c$, define the \emph{reaction function of the followers} $\textup{RF}:\mathbb S^c\to \mathbb S^f$ as the set of  Nash equilibria in the game between fringe members (with $ q^c$ fixed), \\
i.e., the set of profiles of fringe strategies $\left[\bar q_i^f\right]_{i\in F}\in \mathbb S^f$ such that
\begin{equation}\label{eq_def_reaction}
\begin{aligned}
\textup{RF}(q^c)
:= \Bigl\{[\bar q_j^f]_{j\in F}:\,
& J^f_i\!\left(q^c,\bar q_i^f,[\bar q_j^f]_{j\in F\setminus\{i\}}\right)  \\
& \geq J^f_i\!\left(q^c, q_i^f,[\bar q_j^f]_{j\in F\setminus\{i\}}\right)  \\
& \text{for every } q_i^f\in [0,m_i^f], \text{ for a.e. } i\in F
\Bigr\}
\end{aligned}
\end{equation}

and the \emph{joint reaction function of the fringe} to $q^c$, $\textup{JRF}:\mathbb S^c\to \overline{\mathbb S^F}$ as the set of all aggregates of elements of those Nash equilibria, i.e.,
\begin{equation}
    \label{eq_def_joint_reaction}
    \textup{JRF}(q^c):=\left\{\int\limits_F q^f_i\ \textup{d}i: q^f\in \textup{RF}(q^c)\right\}.
\end{equation}
\item  A profile of strategies $\left(\bar q^c,\left[\bar q_i^f\right]_{i\in F}\right)\in \mathbb S^c\times \mathbb S^f$ is a Stackelberg equilibrium with player $c$ being a leader and multiple followers $\{i\in F\}$ if 
    \begin{enumerate}[(i)]
        \item $\left[\bar q_i^f\right]_{i\in F}$ is a Nash equilibrium in the fringe game given $q^c$, i.e.,\\
        $\left[q_i^f\right]_{i\in F}\in \textup{RF}(q^c)$, and
        \item $\bar q^c\in \Argmax\limits_{q^c\in \mathbb S^c}J^c\left(q^c,\textup{JRF}(q^c)\right)$.
    \end{enumerate}
\end{enumerate}
\end{definition}

Typically for Stackelberg problem, we start the description from the followers.

The optimization problem 
of fringe member $i$ remains unchanged: 
\begin{equation}\label{eq_fringe_payoff_stackelberg}
    J^f_i( q^c,q_i,[q_j^f]_{j\in F\setminus \{i\}})=\int_0^{\infty}e^{-rt}(p(t)-k^f)q^f_idt \rightarrow \text{max},
\end{equation}
with constraints $p(t)$ is given by \eqref{eq_cena}, 
$q^f_i(t) \in [0,m^f_i]$,
 $S^f$ is given by \eqref{zmiana_st_f} and 
$S^f(t)\geq 0$. 

As we can see, although $p$ appears as a constraint in such an optimization together with \emph{explicit} dependence on firm $i$ via \eqref{eq_Q^f_def}, this dependence is negligible. So, firm $i$ knows that the price is set such that the market clears and takes it into account together with its own actual influence.

The cartel's maximization problem at the Stackelberg equilibrium becomes
\begin{equation}\label{eq_kartel_calka_nieuproszczona}
    J^c(q^c,Q^f)=\int_0^{\infty}e^{-rt}p(t)q^c(t)dt \rightarrow \text{max},
\end{equation}
under the constraints $q^c(t) \in [0,M^c]$,
$p(t)$ given by \eqref{eq_cena}, $S^c$ given by \eqref{zmiana_st_c}, $Q^f\in \textup{JRF}(q^c)$, and $S^c(t)\geq 0$.

Note that decreasing $q^c(t)$ resulting in $\alpha-\beta(Q^f(t)+q^c(t))-k^c>b$ decreases both the current profit of the cartel and the state of its deposit, so it is never optimal. Thus, we can simplify the problem of the cartel to
\begin{equation}\label{eq_kartel_calka}
    J^c(q^c,Q^f)=\int_0^{\infty}e^{-rt}\left(\alpha-\beta(Q^f(t)+q^c(t))-k^c\right)q^c(t)dt \rightarrow \text{max},
\end{equation}
with constraints $q^c(t) \in [0,M^c]$, $b-\alpha+\beta(q^c+Q^f)\geq 0$,  $Q^f\in \textup{JRF}(q^c)$, 
and $S^c(t)$ given by \eqref{zmiana_st_c} fulfills 
$S^c(t)\geq 0$.


\section{Discussion on model assumptions and solution concepts versus previous literature}\label{s_krytyka}

In the work of \cite{BENCHEKROUN2020102384}, there is one representative fringe firm. The cartel maximizes its net profit and not only does it know the multivalued best response of the fringe to its extraction and price, but also the selection from it chosen by the fringe. Moreover, the chosen selection is the only strategy that guarantees that the market clears.  So, the fringe that neglects its influence on price in its optimization, chooses the strategy that takes into account its influence on market clearing.
Generally, Stackelberg equilibrium works well if the best response of the fringe is well defined and either is unique or, in the case of nonuniqueness, simple and economically justified rules of choice from this best response are needed.
Otherwise, although such a profile is a Stackelberg equilibrium, it is difficult to enforce the required fringe behavior---there is no obvious mechanism design.

 To this end, note that while analyzing the problem of the fringe, the authors have maximized the Hamiltonian in the form 
 \begin{equation}\label{ham_fringe}
    {H}^f(t,S^f(t),Q^f(t),\gamma^f(t))=e^{-rt}(p(t)-k^f)Q^f-\gamma^f(t)Q^f
 \end{equation}
  with $\gamma^f$ denoting the fringe costate variable\\
  over the unbounded set of strategies of the fringe. 
  
  Obviously, without an upper bound on the set of control parameters of the fringe, the supremum of the fringe's Hamiltonian given fixed price trajectory is either infinite or zero. 
 So, as a result of solving the problem in the form without additional assumptions imposed on the set of control parameters for the fringe, either:
\begin{itemize}
    \item there is no solution of the optimization of Hamiltonian (if in \eqref{ham_fringe}, $e^{-rt}(p(t)-k^f)-\gamma^f(t)>0$ for some $t$),
    \item or the solution is equal to zero
    (if $e^{-rt}(p(t)-k^f)-\gamma^f<0$ for some $t$),
    \item or every control leads to the same value of the Hamiltonian
(if $e^{-rt}(p(t)-k^f)-\gamma^f(t)=0$ for some $t$).
\end{itemize}
 The empty best response case is not a problem, as this never happens at equilibrium---the cartel will never restrict extraction that much. On the contrary, $e^{-rt}(p(t)-k^f)-\gamma^f(t)=0$ results in the best $Q^f(t)$ of the fringe completely arbitrary. In \cite{BENCHEKROUN2020102384}, the equilibrium strategy of the fringe was chosen as the unique one at which the market clears at the price $p(t)$ set by the cartel. This is consistent with one of the Stackelberg equilibrium definitions in wide sense for a nonunique best response. Nevertheless, the leader does not have tools to enforce such behavior. On the other hand, the fringe is assumed to neglect its influence on the price, so we cannot expect it to adjust its behavior in this way. Thus, we have no easily justifiable mechanism of the assumed rule of choice. Moreover, for the case of two players, the sense of negligibility means that an agent is not rational. Rational players can feel negligible only if there are many of them.

 As a result of maximization, both Hotelling rents, including $\gamma^f$, are constant. Having calculated $\gamma^f$, we can conclude that arbitrary $Q^f$ is in the set of best response of the fringe to such $p$. The authors have obtained an ambiguous result---after optimizing the fringe problem, the fringe strategy is chosen such that the commodity price remains at the level of $p(t)$, although the fringe does not take into account its influence on price and its best response is arbitrary. So, we have a Stackelberg-type interaction with a non-unique best response of the fringe (acting as the follower) with certain mechanism of choice from multiple best responses. 
 
 This was examined by e.g. \cite{caruso2020regularization} and \cite{carlier2019softening}, who used mathematical terminology of \emph{bilevel optimization} problem (in the language of economics or game theory: a Stackelberg equilibrium problem with single follower). They have presented different kinds of mathematical problems associated with non-uniqueness of the followers' reply and solution concepts to overcome the non-uniqueness and also approximation methods.
 
The mechanism of choice of how the fringe reacts in \cite{BENCHEKROUN2020102384} was not within the scope of those methods. Thus, in this paper, we present a different approach to the cartel-versus-fringe problem. We assume that the cartel chooses extraction and price trajectory at the beginning of the game such that the price has to be equal to market equilibrium price, taking into account the joint reaction of the fringe. In our model, due to explicit modeling of the interaction within the fringe, we obtain that the joint reaction is always unique and it is consistent with rationality of the fringe firms.

Despite the methodology of solving Stackelberg cartel-fringe models using representative fringe firm approach is quite well established, also in dynamic context, in each of them a certain amount of irrationality of the fringe remains.
 Introducing a continuum of fringe firms allows us to obtain rationality of all in coupled optimization problems with many decision makers and to drive results using formal mathematical analysis.


\section{Analysis}\label{s_analysis}
 
\subsection{The fringe game given a fixed strategy of the leader}

We start by showing that the fringe reaction to the cartel policy depends only on current extraction of the cartel, which we can write using an auxiliary \emph{static reaction function} and \emph{static joint best reaction function}. 

\begin{definition}\label{def_static_stackelberg}
\begin{enumerate}[(a)]
\item For an arbitrary decision of a leader $q^c\in [0,M^c]$, define the \emph{static reaction function of the followers} $\textup{SRF}(q^c)$ as the set of  Nash equilibria in the static game with fringe members only (with $ q^c$ fixed), \\
i.e., the set of profiles of fringe decisions $\left[\bar q_i^f\right]_{i\in F}$ with $q_i^f\in [0,m_i^f]$ such that

\begin{equation}\label{eq_def_static_reaction}
\begin{aligned}
\textup{SRF}(q^c)
:= \Bigl\{[\bar q_j^f]_{j\in F}:\,
& (\alpha - \beta (\bar Q^f + \bar q^c)) \bar q_i^f 
 \ge (\alpha - \beta (\bar Q^f + \bar q^c)) q_i^f  \\ 
& \text{for every } q_i^f \in [0, m_i^f], \text{ for a.e. } i \in F
\text{ if }S^f>0,\\
&\ q_i^f=0 \text{ otherwise}\Bigr\}
\end{aligned}
\end{equation}
and the \emph{joint static reaction function of the fringe} to cartel decision $q^c\in [0,M^c]$ as the set of all aggregates of elements of those Nash equilibria, i.e.,
\begin{equation}
    \label{eq_def_static_joint_reaction}
    \textup{JSRF}(q^c,S^f):=\left\{\int\limits_F q^f_i\ \textup{d}i: q^f\in \textup{SRF}(q^c,S^f)\right\}.
\end{equation}
\item By a slight abuse of notation, we extend the definition of \emph{joint static reaction function of the fringe} $\textup{JSRF}$ along entire cartel strategies, $q^c\in \mathbb S^c$, as the set of all $Q^f\in \overline{\mathbb S^f}$ with $Q^f(t)\in \text{JSRF}(q^c(t),S^f(t))$ for a.e. $t$, i.e.,
\begin{equation}
    \label{eq_def_static_joint_reaction_extended}
    \textup{JSRF}(q^c,S^f):=\left\{ Q^f\in \overline{\mathbb S^f}: Q^f(t)\in \textup{JSRF}(q^c(t),S^f(t))\text{ for a.e. }t\right\}.
\end{equation}
\item  A profile of strategies $\left[\bar q^c,\left[\bar q_i^f\right]_{i\in F}\right]$ is a \emph{myopic-follower Stackelberg equilibrium} with player $c$ being the leader and multiple followers $\{i\in F\}$ if 
    \begin{enumerate}[(i)]
        \item for a.e. t, $\left[\bar q_i^f\right]_{i\in F}(t)$ is a Nash equilibrium in the static fringe game given $q^c(t)$, i.e.,\\
        $\left[q_i^f\right]_{i\in F}(t)\in \textup{SRF}(q^c(t),S^f(t))$, and

        \item $\bar q^c\in \Argmax\limits_{q^c}J^c\left(q^c,\textup{JSRF}(q^c,S^f)\right)$.
    \end{enumerate}
\end{enumerate}
\end{definition}

Consequently, each Stackelberg equilibrium is a \emph{myopic-follower Stackelberg equilibrium}. First, there are some values of $q^c$ for which $\textup{SJRF}(q^c)$ is empty. Whenever it is nonempty, the values of $\textup{SJRF}(q^c)(t)$ are either 0 when it is not profitable for the fringe to extract whatever the other fringe firms do, or $M^f$ otherwise. 

\begin{proposition}
    \label{prop_JBR}
    \begin{enumerate}[(a)]
    \item Let $q^c$ be a strategy  of the cartel and let $\left[\bar q_i^f\right]_{i\in F}$ be a profile of strategies of the fringe. Then $\left[\bar q_i^f\right]_{i\in F}\in \textup{RF}(q^c)$ if and only if  for a.e. $t\in \mathbb R_+$, $q^f_{\cdot}(t)\in \textup{SRF}(q^c(t),S^f(t))$.
    \item A profile $\left(q^c,\left[\bar q_i^f\right]_{i\in F}\right)$ is a Stackelberg equilibrium in our game if and only if it is a myopic-follower Stackelberg equilibrium.
        \item  Let $q^c\in [0,M^c]$.   
The joint static reaction function of the fringe is uniquely defined by
\begin{equation}\label{eq_Q^f}\text{JSRF}(q^c,S^f)=
\begin{cases}
    \{0\}&\text{if } q^c\geq \frac {\alpha -k^f}{\beta}  \text{ or } S^f=0\\
    \{M^f\}&\text{if } q^c\leq \frac {\alpha - M^f-k^f}{\beta}\text{ and }S^f>0\\
    \emptyset & \text{otherwise}.
\end{cases}
\end{equation}
\item At every Stackelberg equilibrium for a.e. $t$ at which $S^f(t)>0$, either  $q^c(t)\geq \frac {\alpha -k^f}{\beta}$, $p(t)\leq k^f$ and $Q^f(t)=0$, or $\frac {\alpha - \beta M^f-b}{\beta}\leq q^c(t)\leq \frac {\alpha - \beta M^f-k^f}{\beta}$, $p(t)\geq k^f$ and $Q^f(t)=M^f$.
    \end{enumerate}
\end{proposition}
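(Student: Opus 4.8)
The plan is to prove the four parts in order, using (a) as the structural result that collapses the dynamic fringe game into a time-indexed family of static games, and then reading (b)--(d) off it. For part (a) I would exploit the two defining features of the fringe: each member is negligible, hence a price taker, and all members draw on the single common stock $S^f$. Fix $q^c$ and a candidate profile $[\bar q_i^f]_{i\in F}$ with aggregate $\bar Q^f$; this induces the price path $p(t)=\min\{\alpha-\beta(\bar Q^f(t)+q^c(t)),b\}$ and a depletion time $T^f$. Since firm $i$ has measure zero, changing $q_i^f$ alters neither $p(\cdot)$ nor $T^f$: whatever firm $i$ leaves in the common pool is taken by the others, so it has no conservation motive and the stock carries no individual shadow value. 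Its payoff $\int_0^\infty e^{-rt}(p(t)-k^f)q_i^f(t)\,dt$ is then linear in $q_i^f$ with fixed multiplier $e^{-rt}(p(t)-k^f)$, so it is maximized pointwise in $t$ under $q_i^f(t)\in[0,m_i^f]$ and feasibility $S^f(t)>0$; this pointwise condition is exactly membership in $\textup{SRF}(q^c(t),S^f(t))$. The converse is the standard fact that pointwise maximization of an additively separable objective yields the global maximum. A Fubini argument on the jointly measurable profile lets me exchange ``for a.e.\ $i$'' and ``for a.e.\ $t$''; making the negligibility claim rigorous in the continuum-of-players dynamic framework is the technical cost here, for which I would invoke the equilibrium tools of the cited works on dynamic games with a continuum of players.

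Part (b) is then immediate: integrating the equivalence of (a) over $F$ identifies $\textup{JRF}(q^c)$ with $\textup{JSRF}(q^c,S^f)$, so clause (i) of the Stackelberg definition matches its myopic-follower counterpart and clause (ii) is the same $\Argmax$ in both. For part (c) I would study the fixed points of the aggregate best response. As the margin $p-k^f=\alpha-\beta(Q^f+q^c)-k^f$ strictly decreases in $Q^f$, each firm's linear objective forces a bang-bang reply: extract $m_i^f$ when $Q^f<\tfrac{\alpha-k^f}{\beta}-q^c$ and extract $0$ when $Q^f>\tfrac{\alpha-k^f}{\beta}-q^c$. If $q^c\geq\tfrac{\alpha-k^f}{\beta}$ (or $S^f=0$) the margin is nonpositive already at $Q^f=0$, so the only consistent aggregate is $0$; if $q^c\leq\tfrac{\alpha-\beta M^f-k^f}{\beta}$ the margin is nonnegative even at $Q^f=M^f$, so the only consistent aggregate is $M^f$. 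These two computations give the first two branches.

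The main obstacle is the intermediate regime $\tfrac{\alpha-\beta M^f-k^f}{\beta}<q^c<\tfrac{\alpha-k^f}{\beta}$, where I must show $\textup{JSRF}=\emptyset$. Here neither $Q^f=0$ nor $Q^f=M^f$ is self-consistent, and the only surviving candidate is the interior aggregate $Q^f=\tfrac{\alpha-k^f}{\beta}-q^c\in(0,M^f)$, at which $p=k^f$ and every firm is exactly indifferent. This is precisely the degenerate configuration---zero fringe profit, individually arbitrary extraction---that the representative-fringe literature can pin down only by an ad hoc selection. I would argue it is not an admissible equilibrium in our determinate sense: it is unstable (any perturbation of the aggregate sends every firm to the opposite corner), and reproducing the value $\tfrac{\alpha-k^f}{\beta}-q^c$ demands nonuniform tie-breaking across a continuum of indifferent firms rather than a genuine best-response fixed point. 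Ruling this knife-edge out cleanly, consistently with the equilibrium concept that motivates the whole paper, is the crux.

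Finally, part (d) combines (c) with the cartel's constraints. At a Stackelberg equilibrium the cartel needs $Q^f\in\textup{JSRF}(q^c,S^f)$, so wherever $S^f(t)>0$ the value $q^c(t)$ must fall in one of the two nonempty branches of (c). In the deterrence branch $q^c(t)\geq\tfrac{\alpha-k^f}{\beta}$ we get $Q^f(t)=0$ and hence $p(t)=\alpha-\beta q^c(t)\leq k^f$. In the accommodation branch $Q^f(t)=M^f$, and substituting gives $p(t)=\alpha-\beta(M^f+q^c(t))\geq k^f$ exactly when $q^c(t)\leq\tfrac{\alpha-\beta M^f-k^f}{\beta}$; the companion lower bound $q^c(t)\geq\tfrac{\alpha-\beta M^f-b}{\beta}$ is not a fringe condition but the cartel's own constraint $b-\alpha+\beta(q^c+Q^f)\geq0$ evaluated at $Q^f=M^f$, encoding that the cartel never lets the price rise above the backstop $b$. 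Collecting the two branches yields the stated dichotomy.
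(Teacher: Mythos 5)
Your route is essentially the paper's: part (a) is obtained by decomposing the open-loop fringe game into static games at each $(t,S^f(t))$ via the continuum-of-players machinery (the paper invokes Theorem 5.1 of the cited work of Wiszniewska-Matyszkiel on open-loop games with a continuum of players and only needs to check finiteness of equilibrium payoffs, which your boundedness remark covers), (b) follows by aggregation, and (c)--(d) are the elementary bang-bang fixed-point analysis that the paper dismisses as ``basic calculus.'' Your economic explanation of why the common stock carries no individual shadow value for a negligible firm is a correct elaboration of what the citation delivers.

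The one genuine gap is exactly where you locate it: the emptiness of $\textup{JSRF}$ in the intermediate regime $\frac{\alpha-\beta M^f-k^f}{\beta}<q^c<\frac{\alpha-k^f}{\beta}$. Under the definition of $\textup{SRF}$ actually given in the paper, a profile aggregating to $Q^f=\frac{\alpha-k^f}{\beta}-q^c$ makes every firm exactly indifferent, and an indifferent firm's best-response set is all of $[0,m_i^f]$; hence any such profile \emph{is} a Nash equilibrium of the static game, and $\textup{JSRF}$ would then be the singleton $\left\{\frac{\alpha-k^f}{\beta}-q^c\right\}$ rather than $\emptyset$. Your appeal to instability and to the ``nonuniform tie-breaking'' needed to hit the knife-edge aggregate is a refinement argument, not a consequence of the stated equilibrium concept, so it does not close the gap. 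To be fair, the paper offers no proof of this branch either, and elsewhere (in the feedback-strategies proposition) it explicitly resorts to a tie-breaking rule at $p=k^f$; so you have correctly identified the crux, but neither your proposal nor the paper resolves it without an additional selection or refinement assumption. The remaining branches of (c) and all of (d) are correct and match the paper's intent, including your observation that the lower bound $\frac{\alpha-\beta M^f-b}{\beta}$ in (d) comes from the cartel's own price-cap constraint rather than from the fringe.
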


\begin{proof}
(a) and (b) We consider the notion of \textit{static equilibrium} at the time instance $t$ and state $x$. Following \cite{wiszniewska2014open}, it is a one-shot game played by continuum of players (such as the fringe) at the time instant $t$ and state $x$, in which players' set of strategies and the instantaneous payoff functions depend on both of time and state. By Theorem 5.1 of \cite{wiszniewska2014open} which states that every Nash equilibrium (in our case $q^f_i$ given $q^c$) such that for almost every player $i \in F=[0,1]$ the payoff is finite, then for all $t$, static profiles of players' strategies $q^f_.(t)$ are static equilibria at time $t$. Thus, every open-loop Nash equilibrium is a sequence  of static equilibria. Obviously, in our game, the equilibrium payoffs are nonnegative and bounded from above, thus finite for every player.
\\
(c) and (d) by basic calculus.
\end{proof}


\subsection{Extraction phases}\label{podrozdzial_fazy_cfe}

As the joint reaction of the fringe is not only discontinuous in $q^c$, but may also have empty values, the decision making problem of the cartel is far from standard problems for which Pontryagin Maximum Principle can be applied.

Therefore, following \cite{BENCHEKROUN2020102384}, we split the time interval into phases at which $q^c$ remains in one of subintervals of \eqref{eq_Q^f}. Assume that at the market equilibrium between the cartel and the fringe, there are four distinct phases of oil extraction (depending on who is currently extracting the resource in the market and what is the market price).

\begin{notation} By $F,C,S,L$ we denote the phases of resource extraction such that:
\begin{itemize}\label{ozn_fazy}
   \item  $F$ -- only the fringe firms extract,
   \item  $C$ -- only cartel extracts and sells at the price strictly lower than $b$,
   \item  $S$ -- both cartel and fringe extract,
   \item  $L$ -- only cartel extracts and sells at price $b$ (the upper bound for the price of the nonrenewable resource).  
\end{itemize}
The notation $t \in \Phi$ means that in the market at time $t$ phase $\Phi$ occurs, where $\Phi \in \{F,C,S,L\}$.
\end{notation}

Additionally, let us assume that throughout the entire game (more precisely, during the entire considered time horizon of market equilibrium) the phase can change. Consequently, sequences of phases are formed.

We expect that the market price of crude oil $p(t)$ at the moment of phase transition is discontinuous. Indeed, at majority of  equilibri, the price is going to jump up at least once. If the cartel sets price for which it is optimal for the fringe not to extract, then the jump appears at the moment at which the cartel depletes its deposit, while if it sets a higher price and extracts, as we shall prove, for $M^f$ high enough, the discontinuity will be when the fringe depletes its deposit. 

In economic reality, such discontinuity in the latter case implies that at least a part of the extraction should be stored and sold in the future. In this paper, we do not consider the availability of a sufficient storage technology. The storage may be omitted in the model for the following reasons: the fringe is myopic, it does not know when the market price will increase, or lacks substantial storage capacity due to high storage costs, resulting in a negative payoff under any market conditions. The model with introduced storage technology can be a source of further research.  Nevertheless, to reflect the real life conditions, storage capacities are not only bounded, but also small compared to extraction levels.

\subsection{Classes of cartel strategies}\label{podrozdzial_strategie}
We focus on finding the optimal strategy of the cartel given the reaction function of the fringe, considering the strategies it can adopt during the game. 

In the further part of the study, we have decided to consider the following three theoretical scenarios, i.e. classes of strategies that the cartel can adopt during the market game. Each of them consists of a single phase or a sequence of phases\footnote{We have introduced the phases $C,\ F,\ S,\ L$ in Subsection \ref{ozn_fazy}.}. Each phase depends on who is the active market participant (who extracts and sells oil) and the resulting selling price. Those classes of strategies can be characterized as follows:
\begin{itemize}\label{def_scenarios}

\item \textbf{1. ''Deter'' [the fringe from extraction]} The cartel extracts intensively, with $q^c\geq \frac{\alpha -k^f}{\beta}$, which decreases the price to at most $k^f$ until it depletes its reserves, which excludes the fringe from extraction ($Q^f(t)=0$). So, initially, phase $C$ occurs, and is final from the point of view of the cartel.

\item \textbf{2. ''Share'' [the market]} The cartel extracts at the same time as the fringe (phase $S$) and 

a) the cartel depletes its resources earlier than the fringe,

b) the cartel depletes its resources later than the fringe. Thus, at time $T^f$, which marks the depletion of fringe's resource, the cartel switches to the phase of oil extraction by sole agent---that is, phase $C$ or $L$.

\item \textbf{3. ''Wait'' [till the fringe depletes]}. The cartel waits until the fringe members deplete their resources, and after the depletion of fringe's resources at time $T^f$, phase $C$ or $L$ occurs. However, before the depletion of fringe's resource, the cartel does not extract the resource until the resources are depleted at time $T^f$, and after that phase $F$ occurs. Therefore, $q^c(t)=0$ for all $t<T^f$, which corresponds to phase $F$.
\end{itemize}

Note that the ''deter'' class of strategies is characterized by transitioning from phase $C$ to $F$, ''share'' strategy with cartel depleting first by transitioning from phase $S$ to $F$, ''share'' class of strategies with fringe depleting first by transitioning from phase $S$ to $C$ or $L$ and ''wait'' class of strategies by transitioning from phase $F$ to $C$ or $L$. Therefore, for all classes of strategies, there is a switch from one phase to another. In the case where we have a sequence of phases such that in each phase constituting the sequence, the cartel extracts oil, we calculate the optimal control of the cartel for each phase, starting from the last stage and proceeding backwards. Hence, in this study, we focus on analyzing phases $S$, $C$ and $L$ individually. This type of analysis is necessary because Pontryagin Maximum Principles (see:  Theorems \ref{ZMP_zwiazane} and \ref{zmp_zwykla} and Appendix \ref{appendix_fixed_state} and \ref{appendix_free_State}) require continuity of payoff in control, which is not satisfied when transitioning from one phase to another as $\textup{JSRF}$ is discontinuous.

The profit obtained from choosing a particular strategy is a combination of the profits from the initial phase and the final phase. We refer to the strategy for which the cartel's profit is the highest as the optimal strategy of the cartel.

\begin{definition}\label{startegia_optymalna}
   We refer to the optimal strategy of the cartel as the strategy that leads to achieving by the cartel the highest discounted net profit given by \eqref{eq_kartel_calka} for $Q^f=\textup{JSRF}(q^c,S^f)$ given by the formula \eqref{eq_def_static_joint_reaction} within one of the three strategy classes: ''deter'', ''share'' and '' wait''.
\end{definition}


In the sequel, first, we compute the optimal controls for the cartel in each considered phase, taking into account the adjusted model assumptions, posed in Chapter \ref{s_formulation}. Secondly, we determine the optimal extraction times for the cartel in each considered of ''deter'', ''share'' and ''wait'' classes of strategies and, for each class, we calculate the discounted net profits. Based on these results, we discuss the selection of the most advantageous strategy for the cartel.

\subsection{Candidates for cartel's optimal strategy and depletion time for both players}\label{obliczenie_ster_opt}
Similarly to \cite{BENCHEKROUN2020102384}, we solve the problem by the Pontryagin Maximum Principle. 

The Hamiltonian of the cartel can be initially written as $
   {H}^c(t,S^c, S^f,q^c,\gamma^c, \gamma^{cf})=e^{-rt}((\alpha -\beta (q^c+\textup{JSRF}(q^c,S^f)))-k^c)q^c-\gamma^c q^c - \gamma^{cf}\textup{JSRF}(q^c,S^f).$

\begin{remark}\label{rem_costate}
   \begin{enumerate}[a)]
       \item    The only dependence of $H^c$ on $S^f$ is via $\textup{JSRF}$, which is constant in each of the phases, and it does not influence the optimizer of $H^c$ within the phase. So, it is enough to consider the Hamiltonian in a simpler form
    \begin{equation}\label{eq_hamiltonian_cartel}
   {H}^c(t,S^c, S^f,q^c,\gamma^c)=e^{-rt}((\alpha -\beta (q^c+\textup{JSRF}(q^c,S^f)))-k^c)q^c-\gamma^c q^c.   
\end{equation}

\item At the Stackelberg equilibrium, $H^c$ is always well defined, while $q^c(t)$ for $t$ for which $S^f(t)>0$, is restricted to two disjoint intervals $\left[\frac{\alpha-k^f}{\beta},\frac{\alpha}{\beta}\right]$, $\left[\frac{\alpha-\beta M^f-b}{\beta},\frac{\alpha-\beta M^f-k^f}{\beta}\right]$. So, calculating a Stackelberg equilibrium requires considering a smaller, disjoint set of controls.

\item As $H^c$ is independent of $S^c$, $\gamma^c$ is constant in each of the phases.
    \end{enumerate}
\end{remark}

\subsubsection{The final phase: the cartel remains the sole producer, extracting resources after the fringe depleted their resources}\label{faza_koncowa_1}
This phase occurs after the phase of joint extraction of resources by the cartel and the fringe firms, or after the depletion of resources by the fringe, who was the sole player in the market due to waiting by the cartel. We show that if fringe firms deplete the resource and the cartel becomes the sole player in the market, then the cartel extracts its resource at the market price $b$. Thus, in accordance with the Notation \ref{ozn_fazy}, the last phase will be 
$L$. For the purposes of the proof, we consider the boundary values of the interval that restrict the set of cartel control parameters.

Now, we check whether the optimal control of the cartel is equal to the upper or lower bound of the interval $[\frac{\alpha-b}{\beta},\frac{\alpha}{\beta}]$.

\begin{proposition}\label{stw_o_brzegowym_ster_opt}
    There exists $\overline{t}<T^c$ such that the optimal control of the cartel in the discussed phase is equal to the lower bound of the interval $q^c=\frac{\alpha-b}{\beta}$ for a.e. $t\in [\overline{t},T^c]$, and $q^c>\frac{\alpha-b}{\beta}$ for a.e. $t$ in this phase before $\overline{t}$.
\end{proposition}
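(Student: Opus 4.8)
The plan is to treat this final sole-producer phase as a free-terminal-time optimal control problem and apply the Pontryagin Maximum Principle (Theorems \ref{ZMP_zwiazane} and \ref{zmp_zwykla}). Since $S^f=0$ throughout the phase, Proposition \ref{prop_JBR}(c) gives $\textup{JSRF}=\{0\}$, so the Hamiltonian \eqref{eq_hamiltonian_cartel} reduces to $H^c=e^{-rt}(\alpha-\beta q^c-k^c)q^c-\gamma^c q^c$, and by Remark \ref{rem_costate}c) the costate $\gamma^c$ is constant on this phase. As $H^c$ is strictly concave in $q^c$ (the coefficient of $(q^c)^2$ is $-\beta e^{-rt}<0$), its maximizer over the admissible interval $[\frac{\alpha-b}{\beta},\frac{\alpha}{\beta}]$ is the projection onto that interval of the unconstrained stationary point $q^*(t)=\frac{\alpha-k^c-\gamma^c e^{rt}}{2\beta}$. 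Thus the whole argument reduces to locating $q^*(t)$ relative to the lower bound, which in turn hinges on the sign of $\gamma^c$.

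First I would pin down $\gamma^c$ using the transversality condition at the free depletion time, $H^c(T^c)=0$. Since the admissible controls are bounded below by $\frac{\alpha-b}{\beta}>0$, we have $q^c(T^c)>0$, so $H^c(T^c)=0$ forces $e^{-rT^c}(\alpha-\beta q^c(T^c)-k^c)=\gamma^c$. Comparing this with the first-order condition $e^{-rT^c}(\alpha-2\beta q^c(T^c)-k^c)=\gamma^c$ that would hold were $q^c(T^c)$ interior yields $q^c(T^c)=0$, a contradiction; hence $q^c(T^c)$ lies at a boundary of the interval. The upper boundary $\frac{\alpha}{\beta}$ is excluded because there the price is $0$ and the margin $-k^c<0$: using $H^c(T^c)=0$ to evaluate $\gamma^c=-e^{-rT^c}k^c$ in that case, a direct comparison of Hamiltonian values shows $H^c\!\left(\frac{\alpha-b}{\beta}\right)=e^{-rT^c}b\frac{\alpha-b}{\beta}>0=H^c\!\left(\frac{\alpha}{\beta}\right)$, contradicting maximality. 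Therefore $q^c(T^c)=\frac{\alpha-b}{\beta}$, the price at $T^c$ equals $b$ (confirming the phase ends in $L$), and substituting back gives $\gamma^c=e^{-rT^c}(b-k^c)>0$ because $b>k^c$.

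With $\gamma^c>0$ established, $q^*(t)=\frac{\alpha-k^c-\gamma^c e^{rt}}{2\beta}$ is strictly decreasing and bounded above by the static monopoly level $\frac{\alpha-k^c}{2\beta}<\frac{\alpha}{\beta}$, so the upper bound is never active. Evaluating at the endpoint gives $q^*(T^c)=\frac{\alpha-b}{2\beta}<\frac{\alpha-b}{\beta}$, so by continuity $q^*<\frac{\alpha-b}{\beta}$ on a whole terminal interval, where the constrained control is pinned at the lower bound. I would then set $\bar t:=\inf\{t:\,q^*(t)\le\frac{\alpha-b}{\beta}\}$; strict monotonicity together with $q^*(T^c)<\frac{\alpha-b}{\beta}$ guarantees $\bar t<T^c$, the control equals $\frac{\alpha-b}{\beta}$ on $[\bar t,T^c]$, and for $t<\bar t$ one has $q^*(t)\in\left(\frac{\alpha-b}{\beta},\frac{\alpha}{\beta}\right)$, so the interior optimal control satisfies $q^c(t)=q^*(t)>\frac{\alpha-b}{\beta}$. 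If the phase already starts with $q^*\le\frac{\alpha-b}{\beta}$, then $\bar t$ coincides with the start and the ``before'' part is vacuous, still consistent with the claim.

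The main obstacle is justifying the sign, indeed the exact value, of the constant scarcity rent $\gamma^c$: without it the monotonicity direction of $q^*$, and hence the entire phase structure, is undetermined. The free-terminal-time transversality condition combined with the strictly positive extraction lower bound is what resolves it, and the only other mildly delicate point is ruling out the upper boundary $\frac{\alpha}{\beta}$, which the direct Hamiltonian-value comparison above handles. The remaining steps (strict concavity, monotonicity of $q^*$, and the continuity argument fixing $\bar t$) are routine.
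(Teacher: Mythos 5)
Your proof is correct and follows essentially the same route as the paper: apply the free-terminal-time PMP, use the transversality condition $\max_{q^c}H^c(T^c)=0$ together with the interior first-order condition to force the terminal control onto the boundary, identify $\gamma^c=(b-k^c)e^{-rT^c}>0$, and exploit the monotone decrease of the unconstrained stationary point $q^*(t)$ to locate $\overline t$. Your explicit Hamiltonian-value comparison ruling out the upper bound $\frac{\alpha}{\beta}$, and the direct check $q^*(T^c)=\frac{\alpha-b}{2\beta}<\frac{\alpha-b}{\beta}$ guaranteeing $\overline t<T^c$ strictly, are in fact spelled out more carefully than in the paper's version of the argument.
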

\begin{proof}
  See Appendix \ref{ss_proofs_stw_o_brzegowym_ster_opt}.
\end{proof}

Based on condition \eqref{max_zero} of Theorem \ref{ZMP_zwiazane}, we have $\max_{q^c}H^c\big(T^c,S^c_*(T^c),q^c,\gamma^c(T^c)\big)=0$. Substituting the form of the optimal control at the final time $q_{\max}^c(T^c)=\frac{\alpha-b}{\beta}$ into the Hamiltonian function, we obtain
\begin{equation*}
    \frac{\alpha-b}{\beta}\Big(\big(\alpha-k^c-\beta \cdot \frac{\alpha-b}{\beta}\big)e^{-rT^c}-\gamma^c\Big)=0,
\end{equation*}
thus $(b-k^c)e^{-rT^c}-\gamma^c(T^c)=0$, which implies $\gamma^c(T^c)=(b-k^c)e^{-rT^c}$. Consequently, $\gamma^c(T^c)=\gamma^c>0$, since it is given that $b>k^c$.

Now, we impose appropriate conditions on the parameters in the model so that the cartel's control function is \textit{everywhere boundary}, meaning for every $t$ such that $T^f\leq t \leq T^c$, which is for every $t$ in the discussed phase, the equality $q^c_*(t)=\frac{\alpha-b}{\beta}$ holds almost everywhere.

\begin{proposition}\label{stw_o_ograniczeniu_wszedzie_brzegowa} 
If $2b-k^c\leq \alpha$, then for every $t$ such that $T^f\leq t \leq T^c$ in the discussed phase, the optimal control $q^c_*$ is equal to $\frac{\alpha-b}{\beta}$, the price is $b$ and the depletion time is $\hat{T}^c=T^f+\frac{S_0^c \cdot \beta}{\alpha-b}$.
\end{proposition}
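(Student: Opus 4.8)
The plan is to show that, under the additional condition $2b-k^c\le\alpha$, the interior (unconstrained) maximizer of the cartel's Hamiltonian never enters the admissible interval, so the optimal control sits on its lower boundary for the entire phase, rather than only on a terminal subinterval as in Proposition \ref{stw_o_brzegowym_ster_opt}.

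First I would recall that in this final phase $\textup{JSRF}=0$, so by \eqref{eq_hamiltonian_cartel} the Hamiltonian reduces to
\begin{equation*}
H^c(t,q^c)=e^{-rt}\bigl(\alpha-\beta q^c-k^c\bigr)q^c-\gamma^c q^c,
\end{equation*}
which is a strictly concave (downward) quadratic in $q^c$ on the admissible interval $\left[\frac{\alpha-b}{\beta},\frac{\alpha}{\beta}\right]$, its leading coefficient $-\beta e^{-rt}$ being negative. Since $H^c$ does not depend on $S^c$, by Remark \ref{rem_costate} the costate $\gamma^c$ is constant throughout the phase, and from the terminal condition already used above it equals $\gamma^c=(b-k^c)e^{-rT^c}>0$. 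Hence, for the pointwise maximization required by the Pontryagin Maximum Principle (Theorem \ref{ZMP_zwiazane}), it suffices to locate the vertex of this parabola relative to the left endpoint $\frac{\alpha-b}{\beta}$.

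The key step is the sign analysis. Setting $\partial_{q^c}H^c=0$ gives the interior maximizer
\begin{equation*}
\tilde q^c(t)=\frac{\alpha-k^c-\gamma^c e^{rt}}{2\beta},
\end{equation*}
which is decreasing in $t$. A direct rearrangement shows that $\tilde q^c(t)\le\frac{\alpha-b}{\beta}$ is equivalent to $\gamma^c e^{rt}\ge 2b-\alpha-k^c$. The hypothesis $2b-k^c\le\alpha$ says precisely that the right-hand side is nonpositive, while the left-hand side is strictly positive for every $t$; therefore the inequality holds for all $t$ in the phase, uniformly and regardless of $T^f$. By concavity, $H^c(t,\cdot)$ is then nonincreasing on the whole admissible interval, so its maximum is attained at the left endpoint. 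Pointwise maximization thus yields $q^c_*(t)=\frac{\alpha-b}{\beta}$ for a.e.\ $t\in[T^f,T^c]$; equivalently, the threshold $\overline t$ of Proposition \ref{stw_o_brzegowym_ster_opt} collapses to $T^f$ (the monotonicity of $\tilde q^c$ explains why, without the hypothesis, the boundary arc could be only terminal).

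The remaining conclusions are immediate. Substituting $q^c_*=\frac{\alpha-b}{\beta}$ into \eqref{eq_cena} with $Q^f=0$ gives $p=\alpha-\beta\frac{\alpha-b}{\beta}=b$, so the cartel sells at the cap $b$ throughout the phase. Finally, integrating the state equation \eqref{zmiana_st_c} with the constant rate $\frac{\alpha-b}{\beta}$ over $[T^f,T^c]$, starting from the stock $S_0^c$ available when the cartel begins extracting, yields $S_0^c=\frac{\alpha-b}{\beta}(T^c-T^f)$, i.e.\ $\hat T^c=T^f+\frac{S_0^c\beta}{\alpha-b}$. I expect the only delicate point to be the sign comparison in the key step---verifying that $2b-k^c\le\alpha$ is exactly what prevents the vertex from ever rising into the admissible interval---since everything else is the routine concave maximization and integration already set up by the preceding propositions.
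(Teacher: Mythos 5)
Your proposal is correct and follows essentially the same route as the paper: constancy and positivity of $\gamma^c=(b-k^c)e^{-rT^c}$, the decreasing interior maximizer $\frac{\alpha-k^c}{2\beta}-\frac{\gamma^c}{2\beta}e^{rt}$, and the observation that under $2b-k^c\le\alpha$ this vertex never rises above the lower endpoint $\frac{\alpha-b}{\beta}$, followed by the same integration for $\hat T^c$. Your sign comparison (positive left-hand side versus nonpositive right-hand side) is in fact a slightly cleaner rendering of the paper's bound $b\bigl(2-e^{r(t-T^c)}\bigr)+k^c\bigl(e^{r(t-T^c)}-1\bigr)-\alpha\le 2b-k^c-\alpha\le 0$.
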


\begin{proof}
See Appendix \ref{ss_proofs_stw_o_ogr_wszedzie_brzegowa}.
 \end{proof}

The assumption $2b-k^c\leq \alpha$ makes economic sense, because $\alpha$ represents the lowest price at which consumer demand drops to zero, i.e. is a choke price (there is no demand for the good at such a low price). On the other hand, $b$ is the upper bound on prices set by the monopolist to prevent entry of competitors producing perfect substitutes for crude oil, i.e. is a limit price (it is a maximum price that the cartel is willing to pay for oil). For further analysis, we can additionally assume that $\alpha$ is significantly higher than the other price parameters, namely the marginal costs $k^c$ and $k^f$.




\subsubsection{The final phase from the cartel's perspective: the cartel extracts as a sole extractor at a price $p(t)\leq k^f$}

First, we note that decreasing price by increasing extraction below $\frac{\alpha-k^f}{\beta}$ is suboptimal, which we can prove analogously to the case of extraction by the cartel after the fringe depleted.

This is the final phase from the cartel's perspective, denoted as $C$ according to Notation \ref{ozn_fazy}. By Proposition \ref{prop_JBR}, $Q^f(t)=0$ for this extraction phase. 
\begin{proposition}\label{prop_k_f_boundary}
    If 2$k_f-k^c\leq \alpha$, the for a.e. $t$ in this phase $q^c=\frac{\alpha-k^f}{\beta}$, the price is $k^f$ and the cartel's depletion time is $\tilde{T}^c=\frac{S_0^c \cdot \beta}{\alpha-k^f}$.
\end{proposition}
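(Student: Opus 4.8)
The plan is to mirror the argument used for the final phase $L$ (Propositions \ref{stw_o_brzegowym_ster_opt} and \ref{stw_o_ograniczeniu_wszedzie_brzegowa}), replacing the limit price $b$ by the fringe marginal cost $k^f$. Indeed, by Proposition \ref{prop_JBR} we have $\textup{JSRF}=0$ throughout this phase, so by Remark \ref{rem_costate}(a) the Hamiltonian \eqref{eq_hamiltonian_cartel} reduces to
\begin{equation*}
H^c(t,S^c,S^f,q^c,\gamma^c)=e^{-rt}\big(\alpha-\beta q^c-k^c\big)q^c-\gamma^c q^c,
\end{equation*}
which is strictly concave in $q^c$, while by Remark \ref{rem_costate}(b) the admissible controls that keep the fringe deterred form the interval $\left[\frac{\alpha-k^f}{\beta},\frac{\alpha}{\beta}\right]$, exactly the analogue of the interval $\left[\frac{\alpha-b}{\beta},\frac{\alpha}{\beta}\right]$ treated in phase $L$. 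I would first record that, since $H^c$ does not depend on $S^c$, the costate $\gamma^c$ is constant on the phase (Remark \ref{rem_costate}(c)).

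Next I would pin down $\gamma^c$ from the terminal condition. This is the final phase from the cartel's point of view, with free terminal time $\tilde T^c$ and fixed terminal state $S^c(\tilde T^c)=0$ (depletion). Hence condition \eqref{max_zero} of Theorem \ref{ZMP_zwiazane} gives $\max_{q^c}H^c(\tilde T^c,\cdot)=0$; substituting the conjectured boundary control $q^c=\frac{\alpha-k^f}{\beta}$ (so that $p=\alpha-\beta q^c=k^f$) yields
\begin{equation*}
\frac{\alpha-k^f}{\beta}\Big((k^f-k^c)e^{-r\tilde T^c}-\gamma^c\Big)=0,
\end{equation*}
and since $\alpha>k^f$ this forces $\gamma^c=(k^f-k^c)e^{-r\tilde T^c}$, which is strictly positive because $k^c<k^f$. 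This step is the exact counterpart of the derivation $\gamma^c=(b-k^c)e^{-rT^c}$ preceding Proposition \ref{stw_o_ograniczeniu_wszedzie_brzegowa}.

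I would then verify that the maximizer sits at the lower boundary for a.e.\ $t$ in the phase. The unconstrained stationary point of $H^c$ is $q^c_{\mathrm{unc}}(t)=\frac{\alpha-k^c-\gamma^c e^{rt}}{2\beta}$, and by strict concavity it suffices to check $q^c_{\mathrm{unc}}(t)\le\frac{\alpha-k^f}{\beta}$, which rearranges to $2k^f-k^c-\gamma^c e^{rt}\le\alpha$. Because $\gamma^c>0$ and $e^{rt}>0$, the left-hand side is strictly below $2k^f-k^c$, so the hypothesis $2k^f-k^c\le\alpha$ makes the inequality hold uniformly in $t$, with no endpoint analysis required. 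Concavity then places the constrained maximizer at the lower bound, giving $q^c_*(t)=\frac{\alpha-k^f}{\beta}$ a.e.\ and $p(t)=k^f$. Finally, integrating the state equation \eqref{zmiana_st_c} with this constant extraction rate from $S^c(0)=S_0^c$ down to $S^c(\tilde T^c)=0$ yields $\tilde T^c=\frac{S_0^c\beta}{\alpha-k^f}$.

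The bulk of the work is a direct transcription of the phase-$L$ proofs, so the only genuine point requiring attention is that the sign conditions still go through with $k^f$ in place of $b$: one needs $k^f<\alpha$ for positivity of the lower-bound control and of $\tilde T^c$, and $k^c<k^f$ for $\gamma^c>0$, both of which hold by the standing assumption $k^c<k^f<b<\alpha$. I do not expect a substantive obstacle beyond confirming that the corner solution is valid on the whole phase rather than only near $\tilde T^c$; here this is immediate, since the inequality $2k^f-k^c-\gamma^c e^{rt}\le\alpha$ holds for every $t$ as soon as $2k^f-k^c\le\alpha$ is assumed.
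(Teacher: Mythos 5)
Your proposal is correct and follows essentially the same route as the paper, whose proof of this proposition is literally ``analogously to Proposition \ref{stw_o_ograniczeniu_wszedzie_brzegowa} with $b$ replaced by $k^f$'': you pin down the constant costate from the transversality condition \eqref{max_zero}, use $2k^f-k^c\le\alpha$ to show the unconstrained maximizer lies below the lower endpoint $\frac{\alpha-k^f}{\beta}$ of the admissible interval, and integrate the state equation for $\tilde T^c$. The only (harmless) difference is that you invoke concavity to place the maximizer at the corner directly, whereas the paper's phase-$L$ template first rules out interior optimizers via a separate lemma; the conclusion and the role of the hypothesis are identical.
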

\begin{proof}
    Analogously to proof of Proposition \ref{stw_o_ograniczeniu_wszedzie_brzegowa} with $b$ replaced by $k^f$.
\end{proof}

It is straightforward to derive that since $p(t)=\alpha-\beta q^c$ during this phase and $p(t)=k^f$, the cartel's control under this constraint is $q^c_*(t)=\frac{\alpha-k^f}{\beta}$. Now, we calculate the extraction time for the cartel resulting from this control (denoted as $\tilde{T}^c$). We need to compute the integral:
\begin{equation*}
    \int_0^{\tilde{T}^c}\frac{\alpha-k^f}{\beta}dt=S^c_0.
\end{equation*}
Hence, $\frac{\alpha-k^f}{\beta}t\Big|_0^{\tilde{T}^c}=S_0^c$ and after simple transformations, we obtain $\tilde{T}^c=\frac{S_0^c \cdot \beta}{\alpha-k^f}$.

\subsubsection{The initial phase: the cartel extracts resources at the same time as the competitors}\label{ref_faza_poczatkowa}

In accordance with Notation \ref{ozn_fazy}, we consider the phase of simultaneous oil supply denoted as $S$. In this subsection, we calculate the value of the constant $\gamma^c$ for this phase, demonstrate the existence of the optimal extraction time for the cartel, and then compute the optimal control level $q^c(t)$ for the cartel in this phase. 

Based on the information that the cartel and the fringe extract oil simultaneously in this phase, we conclude that the cartel sets the price within the range $p(t) \in (k^f,b)$, where $k^f$ is the marginal cost of extraction for the fringe and $b$ is the limit selling price of crude oil. According to the definition of this phase, we assume that the price is greater than $k^f$, as the cartel sets a price at which the competitors will make profits upon entering the market. Hence, according to equation \eqref{eq_Q^f}, they will extract resources at intensity $Q^f=M^f$. 

We assume that the optimal control for the cartel in this phase does not lead to price $b$.
To exclude the possibility of obtaining the price $b$ as optimal in this phase, we impose an additional constraint on the model parameters later in this subsection.

In the discussed phase, the price is set within the interval $(k^f,b)$, which means that the set of optimal control parameters for the cartel is bounded on both sides. We determine the lower bound of this interval by considering the upper limit imposed on the price (assuming that the price in this phase is less than $b$):
\begin{equation*}
    p(t)=\alpha-\beta(q^c+M^f)< b,
\end{equation*}
where $Q^f=M^f$ represents the optimal level of resource extraction by the competitors. From this it follows that $q^c(t)> \frac{\alpha-b-M^f\beta}{\beta}$.

The upper bound is determined similarly from the lower limit on the price:
\begin{equation*}
   p(t)= \alpha-\beta(q^c+M^f) > k^f,
\end{equation*}
which implies $q^c(t)< \frac{\alpha-k^f-M^f\beta}{\beta}$. Hence, the optimal control for the cartel belongs to the interval:
\begin{equation}\label{scen3_przedzial_par_ster}
    q^c(t) \in \Big(\frac{\alpha-b-M^f\beta}{\beta},\frac{\alpha-k^f-M^f\beta}{\beta}\Big).
\end{equation}
We assume that $\frac{\alpha-b-M^f\beta}{\beta}>0$. Note that in Subsection \ref{faza_koncowa_1} we have assumed that $\frac{\alpha-b}{\beta}>0$, so $\frac{\alpha-b}{\beta}>M^f$. Hence, we strengthen the previous assumption.

In proving the existence of the optimal exhaustion time $T^c$ and while calculating $\gamma^c$, we rely on the assumption that the cartel will deplete its resources later than the competitors, provided that it allows them to exploit theirs. To ensure this, we  impose appropriate constraints on the resources of the cartel and the fringe: $S_0^c$, $S_0^f$. This assumption is reasonable because we assume that the reserves of the competitors are much smaller than those of the cartel.

\begin{proposition}\label{stw_o_ograniczeniu_zasobow}
If $2b-k^c\leq \alpha$ and $\frac{S_0^f}{M^f} < \frac{S_0^c \beta}{\alpha-\beta M^f - k^f}$, then the cartel will not deplete its resources before the fringe at the simultaneous extraction phase.
\end{proposition}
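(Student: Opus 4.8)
The plan is to reduce the statement to a single integral estimate: I will show that the most the cartel can possibly extract during the simultaneous phase, up to the instant the fringe exhausts its deposit, is strictly less than the cartel's own initial stock $S_0^c$, so that the cartel still holds reserves when the fringe runs out.

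First I would pin down the fringe's depletion time. Throughout the simultaneous phase $S$ the price satisfies $p(t)\in(k^f,b)$, hence in particular $p(t)\ge k^f$, so by Proposition~\ref{prop_JBR} and the formula \eqref{eq_Q^f} the joint static reaction of the fringe equals $Q^f(t)=M^f$, constant in $t$. Integrating the fringe state equation \eqref{zmiana_st_f} with $S^f(0)=S_0^f$, the fringe depletes exactly at the right endpoint of this phase,
\[
T^f=\frac{S_0^f}{M^f}.
\]

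Next I would bound the cartel's cumulative extraction. Since we remain in phase $S$, the cartel control lies in the interval \eqref{scen3_przedzial_par_ster}; in particular $q^c(t)\le \frac{\alpha-\beta M^f-k^f}{\beta}$ for a.e.\ $t$ in the phase. (This upper endpoint is positive under the strengthened assumption $\frac{\alpha-b}{\beta}>M^f$ made just before this proposition, which simultaneously guarantees that the denominator $\alpha-\beta M^f-k^f$ in the hypothesis is positive.) Integrating the cartel state equation \eqref{zmiana_st_c}, the remaining cartel stock at the moment the fringe is exhausted satisfies
\[
S^c(T^f)=S_0^c-\int_0^{T^f}q^c(t)\,dt\ \ge\ S_0^c-\frac{\alpha-\beta M^f-k^f}{\beta}\,T^f .
\]

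Finally I would substitute $T^f=S_0^f/M^f$ and rearrange the hypothesis. The assumption $\frac{S_0^f}{M^f}<\frac{S_0^c\beta}{\alpha-\beta M^f-k^f}$ is exactly equivalent to $\frac{\alpha-\beta M^f-k^f}{\beta}\cdot\frac{S_0^f}{M^f}<S_0^c$, so the displayed lower bound for $S^c(T^f)$ is strictly positive. Hence the cartel still possesses resources at the instant the fringe depletes, i.e.\ it does not exhaust its deposit before the fringe during the simultaneous phase, which is the claim. The standing assumption $2b-k^c\le\alpha$ enters only through the surrounding phase structure established in Proposition~\ref{stw_o_ograniczeniu_wszedzie_brzegowa} and is not needed for the estimate itself. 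I do not anticipate a genuine obstacle here: the only points requiring care are that $Q^f\equiv M^f$ holds on the \emph{entire} phase (so that $T^f$ is precisely $S_0^f/M^f$) and that the control's upper bound holds almost everywhere, both of which follow directly from Proposition~\ref{prop_JBR} and the admissible-control interval \eqref{scen3_przedzial_par_ster}; the remainder is the elementary rearrangement above.
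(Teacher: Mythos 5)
Your proof is correct and follows essentially the same route as the paper: both arguments reduce to the observation that the cartel's cumulative extraction over $[0,T^f]$ with $T^f=S_0^f/M^f$ is bounded by $\frac{\alpha-\beta M^f-k^f}{\beta}\cdot\frac{S_0^f}{M^f}$, which the hypothesis makes strictly smaller than $S_0^c$. The only (cosmetic) difference is that the paper routes the same estimate through an auxiliary depletion time $T^c_{aux}$ using the post-$T^f$ control $\frac{\alpha-b}{\beta}$, whereas you bound $S^c(T^f)$ directly and correctly observe that $2b-k^c\le\alpha$ is not needed for the estimate itself.
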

\begin{proof}
See Appendix \ref{ss_proofs_stw_o_ograniczeniu_zasobow}.
\end{proof}

Now, we calculate the constant $\gamma^c$, which is useful for proving the existence of the optimal depletion time for the cartel's resources. First, let us introduce the definition of the function $\pi^c_L$ and impose additional constraints on the model parameters.

Since we consider the phase of simultaneous supply $S$, the depletion time of the resource of the fringe $T^f$ is fixed and smaller than $T^c$ and there are no switches between phases. For the purpose of optimization, we can use the maximum principle with fixed time and free final state, i.e., Theorem \ref{zmp_zwykla}. Using it, we obtain a terminal condition for $\gamma^c$:
\begin{equation}\label{war_kon_gamma}
  \gamma^c(T^f)=\frac{\partial \pi^c_L\big(T^f,S^c_*(T^f)\big)}{\partial S^c}e^{-rT^f},    
\end{equation}
where $\gamma^c(T^f)$ is the costate variable of the cartel, satisfying the terminal condition \eqref{zmp_2_wk} in Theorem \ref{zmp_zwykla}.

We define the function $\pi^c$ as the cartel's terminal payoff function, which depends on the depletion time of the competitors' resources, denoted as the end of the discussed phase:
 \begin{equation}\label{wypl_konc}
     \pi^c_L\big(T^f,S^c_{T^f}\big):=\int_{T^f}^{T^c}e^{-rt}\big(\alpha-k^c-\beta q^c_*(t)\big)q^c_*(t) dt,
 \end{equation}
 where $q^c_*(t)$ is the optimal control in phase $L$, according to Proposition \ref{stw_o_ograniczeniu_wszedzie_brzegowa} equal to $\frac{\alpha-b}{\beta}$. Here, $S^c_{T^f}:=S^c(T^f)$ represents the state of the cartel's resources at time $T^f$. We changed the notation to emphasize that the function $\pi^c_L$ is a function of $\pi^c_L(t,S^c)$ (where we substitute $t$ with $T^f$ in the next part of the work).

The equation in which the optimal depletion of the cartel's resource appears is obtained by solving the integral:
\begin{equation}\label{caleczka}
S_0^c=\int_0^{T^c}q^c(t) dt=\int_0^{T^f}q^c(t)dt+\int_{T^f}^{T^c}\frac{\alpha-b}{\beta}dt.
\end{equation}
The optimal control of the cartel over the interval $t \in [0,T^f]$, which is necessary to establish the existence of the moment $T^c$, is found by maximizing the Hamiltonian of the cartel problem as stated in Theorem \ref{zmp_zwykla} (we use Hamiltionian of the form given by \ref{ham_fringe}). For now, we assume that the set of optimal control parameters is within the interior of the interval given in \eqref{scen3_przedzial_par_ster}. The necessary condition \eqref{argmax_u_zwykla} of Theorem \ref{zmp_zwykla} for points $q^c$ from the interior of the interval \eqref{scen3_przedzial_par_ster} looks as follows:

\begin{equation}\label{war_kon_trojka}
e^{-rt}(\alpha-k^c-M^f\beta-2\beta q^c)-\gamma^c(T^f)=0.
\end{equation}
After simple transformations, we obtain:
\begin{equation}\label{opt_ster_trojka}
q^c_*(t)=\frac{\alpha-k^c-M^f\beta}{2\beta}-\frac{\gamma^c}{2\beta}e^{rt}.
\end{equation}
Substituting the optimal control into \eqref{caleczka}, we get:
\begin{equation*}
S_0^c=\int_0^{T^c}q^c(t)dt=
\int_0^{T^f}\frac{\alpha-k^c-M^f\beta}{2\beta}-\frac{\gamma^c}{2\beta}e^{rt}dt+
\int_{T^f}^{T^c}\frac{\alpha-b}{\beta}dt.
\end{equation*}
By calculating the integrals of functions of the form $\text{constant}_1\cdot e^{\text{constant}_2\cdot t}$ and constant functions, we obtain:
\begin{equation}\label{S^c_0_strategia_druga}
\frac{\alpha-k^c-M^f\beta}{2\beta}T^f-\frac{\gamma^c(T^f)}{2\beta r}e^{rT^f}+\frac{\gamma^c(T^f)}{2\beta r}+\frac{\alpha-b}{\beta}(T^c-T^f)=S_0^c.
\end{equation}

Notice that $S^c(T^f)$, the remaining cartel's resource at time $T^f$, is equal to the last term in equation \eqref{S^c_0_strategia_druga}, i.e. $S^c(T^f)=S^c_{T^f}=\frac{\alpha-b}{\beta}(T^c-T^f)$. Thus $T^c=T^f+\frac{S^c_{T^f}\cdot\beta}{\alpha-b}$.
Substituting this derived $T^c$ into equation \eqref{wypl_konc}, we obtain:
\begin{equation*}
   \pi^c\big(T^f,S^c_{T^f}\big)=\int_{T^f}^{T^c}e^{-rt}\Big(\alpha-k^c-\beta \cdot \frac{\alpha-b}{\beta}\Big)\cdot\frac{\alpha-b}{\beta} dt.
\end{equation*}
After integrating a constant function and substituting $T^c$ and $T^f$ as the integration limits, we get:
\begin{equation}\label{pi^c}
    \pi^c\big(T^f,S^c_{T^f}\big)=\frac{1}{r}(b-k^c)\cdot\frac{\alpha-b}{\beta}\cdot\big(e^{-rT^f}-e^{-rT^c}\big).
\end{equation}

\begin{proposition}\label{stw_o_gamma^c}
If $2b-k^c\leq \alpha$ and $\frac{S_0^f}{M^f} < \frac{S_0^c \beta}{\alpha-\beta M^f - k^f}$, then the constant $\gamma^c$ fulfils $\gamma^c=e^{-rT^f-r\frac{S^f(T^f)\beta
}{\alpha -b}}(b-k^c)=(b-k^c)e^{-r(T^f+T^c)}$.
\end{proposition}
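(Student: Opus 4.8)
The plan is to regard phase $S$ as a fixed-horizon, free-terminal-state control problem on $[0,T^f]$ whose scrap value is the continuation profit $\pi^c_L$ inherited from the sole-producer phase $L$, and then to read $\gamma^c$ off the transversality condition \eqref{war_kon_gamma}. Before doing any calculus I would check that the two hypotheses supply the required phase structure. The bound $2b-k^c\le\alpha$ lets me invoke Proposition \ref{stw_o_ograniczeniu_wszedzie_brzegowa}: throughout phase $L$ the cartel extracts at the boundary $q^c_*=\frac{\alpha-b}{\beta}$ and sells at price $b$, which is precisely what collapses $\pi^c_L$ to the closed form \eqref{pi^c}. The resource bound $\frac{S_0^f}{M^f}<\frac{S_0^c\beta}{\alpha-\beta M^f-k^f}$ lets me invoke Proposition \ref{stw_o_ograniczeniu_zasobow}, so the cartel outlasts the fringe and phase $L$ is genuinely nonempty, i.e. $T^c>T^f$; in particular the decomposition \eqref{caleczka} is legitimate and the inherited stock obeys $S^c_{T^f}=\frac{\alpha-b}{\beta}(T^c-T^f)$, equivalently $T^c=T^f+\frac{S^c_{T^f}\beta}{\alpha-b}$.

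The computational heart is then the derivative $\partial\pi^c_L/\partial S^c$. By Remark \ref{rem_costate}, $\gamma^c$ is constant on the phase, so only its value is at stake, and \eqref{war_kon_gamma} pins it to the marginal continuation value. The decisive observation is that in \eqref{pi^c} the inherited stock $S^c_{T^f}$ enters $\pi^c_L$ only through the upper limit $T^c=T^f+\frac{S^c_{T^f}\beta}{\alpha-b}$, with $\partial T^c/\partial S^c_{T^f}=\frac{\beta}{\alpha-b}$. Differentiating \eqref{pi^c} by the chain rule, the prefactor $\frac{1}{r}\cdot\frac{\alpha-b}{\beta}$, the factor $\frac{\beta}{\alpha-b}$ coming from $\partial T^c/\partial S^c_{T^f}$, and the $r$ produced by differentiating $e^{-rT^c}$ all cancel, leaving $\partial\pi^c_L/\partial S^c=(b-k^c)e^{-rT^c}$. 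Feeding this into \eqref{war_kon_gamma} fixes the value of $\gamma^c$, and substituting the depletion-time relation $T^c=T^f+\frac{S^c_{T^f}\beta}{\alpha-b}$ rewrites the exponent in the two forms asserted in the proposition.

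The step I expect to be the genuine obstacle is justifying the transversality relation \eqref{war_kon_gamma} itself, i.e. the envelope-type identity that the costate carried into the junction at $T^f$ equals the marginal value of the stock handed on to phase $L$. This requires that $\pi^c_L$ be differentiable in the inherited stock $S^c_{T^f}$ — which follows from the explicit form \eqref{pi^c} once Proposition \ref{stw_o_ograniczeniu_wszedzie_brzegowa} guarantees the control stays at the boundary — and that this marginal value match the left-limit of $\gamma^c$, i.e. continuity of the costate across the phase boundary (no jump, since $S^c$ is continuous and the switching time $T^f$ is inherited from the fringe rather than chosen freely by the cartel). Establishing this junction condition cleanly, and confirming consistency with the endpoint condition $\gamma^c(T^c)=(b-k^c)e^{-rT^c}$ already derived for phase $L$, is where the care is needed; once it is in place, the differentiation above and the depletion-time substitution are routine.
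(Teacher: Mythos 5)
Your proposal is correct and takes essentially the same route as the paper: differentiate the continuation payoff \eqref{pi^c} with respect to $S^c_{T^f}$ through the dependence $T^c=T^f+\frac{S^c_{T^f}\beta}{\alpha-b}$ to obtain $(b-k^c)e^{-rT^c}$, then apply the transversality condition \eqref{war_kon_gamma} to get $\gamma^c=(b-k^c)e^{-r(T^f+T^c)}$. The junction condition you single out as the delicate step is handled in the paper simply by invoking Theorem \ref{zmp_zwykla}.
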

\begin{proof}
    See Appendix \ref{ss_proofs_stw_o_gamma_c}.
\end{proof}

Now we want to ensure that $q^c_*(t)$ given by \eqref{opt_ster_trojka} is greater than $\frac{\alpha-b-M^f\beta}{\beta}$ for every $t<T^f$. We aim to satisfy condition \eqref{argmax_u_zwykla} from Theorem \ref{zmp_zwykla}---this condition is only met for $q^c$ inside the interval specified by \eqref{scen3_przedzial_par_ster}. At this point, claiming that the obtained optimal control would imply a price lower than $b$ is not a consequence of any previous assumptions in our model; thus, it is yet unjustified. Additional constraints on the parameters of the considered model must be imposed. 

\begin{proposition}\label{stw_o_nierownosci_z_dyskontem}
    If  $2b-k^c\leq \alpha$, $b(2-e^{-rT^c})>\alpha-M^f\beta+k^c(1-e^{-rT^c})$ and  $\frac{S_0^f}{M^f} < \frac{S_0^c \beta}{\alpha-\beta M^f - k^f}$, then the optimal control of the cartel $q^c_*(t)$ for this phase is greater than $\frac{\alpha-\beta M^f-b}{\beta}$  and $p(t)<b$.
\end{proposition}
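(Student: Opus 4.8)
The plan is to reduce the two assertions of the proposition to a single scalar inequality evaluated at the endpoint $t=T^f$, and then to recognize that inequality as exactly the second standing hypothesis. First I would record that in phase $S$ the fringe extracts at $Q^f=M^f$, so by \eqref{eq_cena} the price satisfies $p(t)=\alpha-\beta(q^c(t)+M^f)$; hence $p(t)<b$ holds if and only if $q^c(t)>\frac{\alpha-\beta M^f-b}{\beta}$. The two claims of the proposition are therefore literally the same inequality, and it suffices to establish the lower bound on $q^c_*$.

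Next I would use the explicit optimal control \eqref{opt_ster_trojka}, namely $q^c_*(t)=\frac{\alpha-k^c-M^f\beta}{2\beta}-\frac{\gamma^c}{2\beta}e^{rt}$. Since Proposition \ref{stw_o_gamma^c}---whose hypotheses $2b-k^c\leq\alpha$ and the resource bound $\frac{S_0^f}{M^f}<\frac{S_0^c\beta}{\alpha-\beta M^f-k^f}$ are precisely two of the present assumptions---gives $\gamma^c=(b-k^c)e^{-r(T^f+T^c)}>0$ (using $b>k^c$), the map $t\mapsto q^c_*(t)$ is strictly decreasing. Consequently its minimum over the phase $[0,T^f]$ is attained at $t=T^f$, so it is enough to verify $q^c_*(T^f)>\frac{\alpha-\beta M^f-b}{\beta}$; the strict inequality then propagates to every earlier $t$ automatically, covering the whole phase.

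I would then substitute $\gamma^c$ into the endpoint value, obtaining $q^c_*(T^f)=\frac{\alpha-k^c-M^f\beta}{2\beta}-\frac{(b-k^c)e^{-rT^c}}{2\beta}$, and clear denominators. Multiplying the target inequality by $2\beta$ and collecting terms turns $q^c_*(T^f)>\frac{\alpha-\beta M^f-b}{\beta}$ into $2b-k^c+\beta M^f-(b-k^c)e^{-rT^c}>\alpha$, and splitting $(b-k^c)e^{-rT^c}=be^{-rT^c}-k^ce^{-rT^c}$ rewrites this as $b(2-e^{-rT^c})>\alpha-M^f\beta+k^c(1-e^{-rT^c})$. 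This is exactly the second hypothesis of the proposition, so the desired bound holds and the argument closes.

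I do not expect a genuine obstacle: the real content is just the observation that the optimal control is monotone in $t$ (so only the endpoint $t=T^f$ matters), together with the bookkeeping that converts the endpoint inequality into the stated parameter condition. The single point demanding care is legitimacy of the value of $\gamma^c$ being substituted---that is, confirming that the first and third hypotheses assumed here are precisely the preconditions of Propositions \ref{stw_o_ograniczeniu_wszedzie_brzegowa}, \ref{stw_o_ograniczeniu_zasobow} and \ref{stw_o_gamma^c}, so that phase $S$ is indeed followed by phase $L$ with $q^c_*=\frac{\alpha-b}{\beta}$ and the terminal condition \eqref{war_kon_gamma} yielding $\gamma^c$ is valid in this setting.
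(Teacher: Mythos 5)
Your proposal is correct and follows essentially the same route as the paper's proof: both reduce the claim to the endpoint inequality $q^c_*(T^f)>\frac{\alpha-\beta M^f-b}{\beta}$ (the paper by taking the limit $t\to T^f$, you by observing that $\gamma^c>0$ makes $q^c_*$ decreasing), substitute $\gamma^c=(b-k^c)e^{-r(T^f+T^c)}$ from Proposition \ref{stw_o_gamma^c}, and rearrange to recover the hypothesis $b(2-e^{-rT^c})>\alpha-M^f\beta+k^c(1-e^{-rT^c})$. Your explicit justification of why only the endpoint matters, and your check that the remaining hypotheses license the value of $\gamma^c$, are welcome additions but do not change the argument.
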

\begin{proof}
    See Appendix \ref{ss_proofs_stw_o_nierownosci_z_dyskontem}.
\end{proof}

Now, we show that the optimal depletion time $T^c$ exists. Notice that by rearranging equation \eqref{S^c_0_strategia_druga} we obtain:
\begin{equation}\label{tece}
T^c=\frac{(S_0^c+s)\beta}{\alpha-b},
\end{equation}
where $s$ is a positive constant, independent of $S_0^c$. Using the formula for $\gamma^c$ from equation \eqref{gamma_c}, we can express $s$ explicitly as follows:
\begin{equation}\label{es}
s=\frac{\alpha+k^c+M^f\beta-2b}{2\beta}\cdot T^f+\frac{(b-k^c)}{2\beta r}\cdot\Big(e^{-rT^f}-1\Big)e^{-r(T^c+T^f)}, \text{ where } T^f=\frac{S_0^f}{M^f}.
\end{equation}

\begin{proposition}\label{stw_o_skonczonsci_optymalnego_momentu_wyczerpania}
    The optimal depletion time $T^c$ for the cartel's resources exists and it is finite.
\end{proposition}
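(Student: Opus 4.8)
The plan is to eliminate the costate $\gamma^c$ from the resource–balance equation \eqref{S^c_0_strategia_druga} and thereby reduce the whole problem to a single scalar equation for the unknown $T^c$, which I would then solve by the intermediate value theorem together with a monotonicity argument. Substituting the value $\gamma^c=(b-k^c)e^{-r(T^f+T^c)}$ from Proposition \ref{stw_o_gamma^c} into \eqref{S^c_0_strategia_druga} (recall that $T^f=S_0^f/M^f$ is already fixed) collapses the two $\gamma^c$–terms into a single exponential in $T^c$, so that \eqref{S^c_0_strategia_druga} becomes $\Psi(T^c)=S_0^c$, where
\[
\Psi(x):=\frac{\alpha-b}{\beta}\,(x-T^f)+\frac{\alpha-k^c-M^f\beta}{2\beta}\,T^f+\frac{b-k^c}{2\beta r}\,e^{-rx}\bigl(e^{-rT^f}-1\bigr).
\]
This is exactly the self-referential relation already displayed in \eqref{tece}--\eqref{es}, now read as a genuine zero-finding problem: $T^c$ enters linearly through the first term and, through the bounded factor $e^{-rx}$, also on the right-hand side via $s$.

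Next I would record the two analytic properties of $\Psi$ that close the existence argument. First, $\Psi$ is continuous and strictly increasing on $\mathbb{R}$: a direct differentiation gives $\Psi'(x)=\frac{\alpha-b}{\beta}+\frac{b-k^c}{2\beta}\,e^{-rx}\bigl(1-e^{-rT^f}\bigr)$, and both summands are strictly positive because $\alpha>b$, $b>k^c$ and $T^f>0$. Second, the linear term dominates at infinity, so $\Psi(x)\to+\infty$ as $x\to+\infty$, while $\Psi(x)\to-\infty$ as $x\to-\infty$ (there the exploding factor $e^{-rx}$ multiplies the negative number $e^{-rT^f}-1$). Hence $\Psi$ is a continuous strictly increasing bijection of $\mathbb{R}$ onto $\mathbb{R}$, so the equation $\Psi(T^c)=S_0^c$ has a unique real solution for every admissible $S_0^c>0$; being a point of $\mathbb{R}$, it is automatically finite. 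This already yields existence, uniqueness and finiteness.

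The step I expect to require the most care is not existence itself but making the reduction legitimate and the solution economically admissible. The delicate point is the self-reference: the formula for $\gamma^c$ in Proposition \ref{stw_o_gamma^c} was derived under the presumption that the simultaneous-extraction phase $S$ genuinely lasts until $T^f<T^c$, so I must check that the root produced above actually satisfies $T^c\ge T^f$ (equivalently $\Psi(T^f)\le S_0^c$) rather than falling below $T^f$ or below $0$, which would be inconsistent with the phase structure. This is precisely where the standing hypotheses enter: the assumption $2b-k^c\le\alpha$ controls the sign of the constant part of $\Psi$ (the term $A$ in \eqref{es}), and the resource condition $\frac{S_0^f}{M^f}<\frac{S_0^c\beta}{\alpha-\beta M^f-k^f}$ of Proposition \ref{stw_o_ograniczeniu_zasobow}, which is exactly $S_0^c>\frac{(\alpha-\beta M^f-k^f)}{\beta}\,T^f$, is what I would use to bound $\Psi(T^f)$ from above by $S_0^c$. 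Verifying that single inequality places the unique root in $[T^f,\infty)$, confirms it is a genuine depletion time consistent with phase $S$, and completes the proof.
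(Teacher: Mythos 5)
Your proposal is correct and follows essentially the same route as the paper: both reduce the problem to a single scalar equation in $T^c$ (your $\Psi(T^c)=S_0^c$ is an algebraic rearrangement of the paper's $\tau^c(T^c)=0$ from \eqref{rownanie_z_tece}) and conclude by continuity, monotonicity and the intermediate value theorem, with the sign condition at $t=T^f$ secured by the resource restriction of Proposition \ref{stw_o_ograniczeniu_zasobow}. Your version is slightly cleaner in that it makes the strict monotonicity of $\Psi$ explicit (hence uniqueness of the root), while the final admissibility check $\Psi(T^f)\le S_0^c$ is only sketched in your write-up — but the paper's own verification of that inequality is comparably informal, so this is not a substantive gap.
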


\begin{proof}
    See Appendix \ref{ss_proofs_stw_o_skonczonosci_optymalnego_momentu}.
\end{proof}
    
Finally, we calculate the explicit form of optimal control in the considered phase $S$.

\begin{proposition}
    If  $2b-k^c\leq \alpha$, $b(2-e^{-rT^c})>\alpha-M^f\beta+k^c(1-e^{-rT^c})$ and  $\frac{S_0^f}{M^f} < \frac{S_0^c \beta}{\alpha-\beta M^f - k^f}$, then the optimal cartel control at phase $S$ is given by $q^c_*(t)=\frac{\alpha-k^c-M^f\beta}{2\beta}-\frac{(b-k^c)}{2\beta}\cdot e^{r(t-T^f-T^c)}$. If the last inequality is not fulfilled, then $q^c_*(t)=\max\left\{\frac{\alpha-b-M^f\beta}{\beta} ,\frac{\alpha-k^c-M^f\beta}{2\beta}-\frac{(b-k^c)}{2\beta}\cdot e^{r(t-T^f-T^c)}\right\}$.
\end{proposition}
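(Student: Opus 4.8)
The plan is to obtain the stated expression by substituting the already-computed value of the costate into the interior first-order condition, and then to check that the resulting candidate is genuinely the Hamiltonian maximizer over the admissible control set. First I would recall that in phase $S$ the fringe extracts at $Q^f=M^f$, so by Remark \ref{rem_costate} the Hamiltonian \eqref{eq_hamiltonian_cartel} reduces to the concave quadratic $e^{-rt}\big((\alpha-\beta(q^c+M^f))-k^c\big)q^c-\gamma^c q^c$, whose interior stationary point is exactly \eqref{opt_ster_trojka}. Under the two standing hypotheses $2b-k^c\le\alpha$ and $\frac{S_0^f}{M^f}<\frac{S_0^c\beta}{\alpha-\beta M^f-k^f}$, Proposition \ref{stw_o_gamma^c} supplies $\gamma^c=(b-k^c)e^{-r(T^f+T^c)}$, and plugging this into \eqref{opt_ster_trojka} gives directly
\[
q^c_*(t)=\frac{\alpha-k^c-M^f\beta}{2\beta}-\frac{(b-k^c)}{2\beta}e^{r(t-T^f-T^c)},
\]
the asserted interior formula. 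This first step is purely a substitution and is routine.

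The substantive part is to verify that this candidate lies in the admissible interval \eqref{scen3_przedzial_par_ster} for every $t\in[0,T^f]$, since only there does the stationarity condition \eqref{war_kon_trojka} characterize the maximizer. Because $b>k^c$, the exponential is increasing, so $q^c_*$ is strictly decreasing on $[0,T^f]$; hence its minimum is at $t=T^f$ and its maximum at $t=0$. The lower-bound constraint $q^c>\frac{\alpha-\beta M^f-b}{\beta}$ (equivalently $p<b$) is therefore most stringent at $t=T^f$, and a direct rearrangement shows that $q^c_*(T^f)>\frac{\alpha-\beta M^f-b}{\beta}$ is \emph{precisely} the middle inequality $b(2-e^{-rT^c})>\alpha-M^f\beta+k^c(1-e^{-rT^c})$; this is the content of Proposition \ref{stw_o_nierownosci_z_dyskontem}, so under that hypothesis the lower bound never binds. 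The upper-bound constraint $q^c<\frac{\alpha-\beta M^f-k^f}{\beta}$ (equivalently $p>k^f$) is most stringent at $t=0$ and holds under the standing assumption that $\alpha$ is large relative to the marginal costs $k^c,k^f$. Consequently $q^c_*$ stays interior throughout $[0,T^f]$, where it maximizes $H^c$, which establishes the first assertion.

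Finally, when the middle inequality fails, the decreasing candidate dips below the admissible lower bound on a terminal subinterval of $[0,T^f]$. Since $H^c$ is strictly concave in $q^c$ (leading coefficient $-\beta<0$), its maximizer over the closed admissible interval is the projection of the free stationary point onto that interval, so where the stationary point falls below $\frac{\alpha-\beta M^f-b}{\beta}$ the maximizer equals this endpoint, which yields the stated $\max\{\cdot,\cdot\}$ form. The hard part will be the self-consistency of this construction: once the control is pinned to the boundary on part of the phase, the extraction balance \eqref{caleczka}, and hence $T^c$ and through it $\gamma^c$, are altered, so one must argue that the depletion time, the costate, and the switching time are determined simultaneously and that the resulting profile still satisfies the maximum principle. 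I would close this by invoking the concavity of $H^c$ together with the terminal matching condition \eqref{war_kon_gamma} at $T^f$, which fixes the (phase-constant) value of $\gamma^c$ from the phase-$L$ continuation independently of whether the constraint binds inside phase $S$, leaving only the scalar equations for $T^c$ and the switch to be solved consistently.
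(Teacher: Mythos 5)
Your first paragraph is exactly the paper's proof: the published argument consists solely of substituting the costate value $\gamma^c=(b-k^c)e^{-r(T^f+T^c)}$ from Proposition \ref{stw_o_gamma^c} into the interior first-order condition \eqref{opt_ster_trojka}, with $T^c$ solving \eqref{rownanie_z_tece}. So for the main case you take the same route and your answer is correct.

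What you add goes beyond the paper, and usefully so. The paper's proof never verifies that the candidate actually lies in the admissible interval \eqref{scen3_przedzial_par_ster} on all of $[0,T^f]$ (that role is implicitly delegated to Propositions \ref{stw_o_nierownosci_z_dyskontem} and \ref{prop_k_f_boundary}, which you correctly identify as supplying the binding check at $t=T^f$), and it says nothing at all about the second assertion of the statement --- the $\max\{\cdot,\cdot\}$ formula when the middle inequality fails. Your projection-onto-the-interval argument via strict concavity of $H^c$ in $q^c$ is the right mechanism for that case, and you are also right to flag the genuine subtlety the paper glosses over: once the control is clamped at $\frac{\alpha-\beta M^f-b}{\beta}$ on a terminal subinterval of phase $S$, the resource balance \eqref{caleczka} changes, so $T^c$, and through \eqref{war_kon_gamma} the costate $\gamma^c$, must be re-solved jointly with the switching time. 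Your closing remark that $\gamma^c$ is ``fixed independently of whether the constraint binds'' is slightly too quick --- it is fixed only through $S^c(T^f)$ and hence through $T^c$, which does shift --- but you immediately qualify this by noting the scalar system must be solved consistently, so the sketch is sound. In short: same approach as the paper on the stated case, plus a treatment of the boundary case that the paper's one-line proof omits entirely.
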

\begin{proof}

    The calculation of the optimal control follows from the necessary condition of Theorem \ref{zmp_zwykla}, i.e. equation \eqref{argmax_u_zwykla}. Based on this condition, we obtain the form of the optimal control \eqref{opt_ster_trojka}. Substituting the obtained expression for $\gamma^c(T^f)=(b-k^c)e^{-r(T^f+T^c)}$ into the cartel's control function, we ultimately get the explicit form $q^c_*(t)=\frac{\alpha-k^c-M^f\beta}{2\beta}-\frac{(b-k^c)}{2\beta}\cdot e^{r(t-T^f-T^c)}$, with $T^c$ being the solution to \eqref{rownanie_z_tece}.
\end{proof}

\subsubsection{Various information structures}

Equilibria in open-loop strategies of the players are often criticized for not being strongly subgame perfect and being unrealistic. Equilibria in feedback strategies, i.e., strategies dependent on the current state, are strongly subgame perfect and are regarded as more realistic. Therefore, an obvious question is what if we change the form of strategies of the players to \emph{feedback strategies}, i.e., we consider strategies of the form $q^{c,F}(S^c,S^f,t)$ and $q_i^{f,F}(S^c,S^f,t)$ and we want the equilibrium condition to hold for every initial condition, not only the fixed one. 
We are interested in how this change influences the equilibrium extractions and trajectories of the resources. 
For this comparison, we need the \emph{open-loop forms} of feedback strategies, i.e., open-loop strategies describing the choices at various time instants. They are defined as follows.

\begin{equation}
    \label{eq_feedback_OL_forms} 
\begin{aligned}
\textup{OL}\Bigl(q^{c,F}\bigl(S^c,S^f,t\bigr)\Bigr)(t) 
&= q^{c,F}\bigl(S^c(t),S^f(t),t\bigr), \\[2mm]
\textup{OL}\Bigl(q_i^{f, F}\bigl(S^c,S^f,t\bigr)\Bigr)(t) 
&= q_i^{f,F}\bigl(S^c(t),S^f(t),t\bigr).
\end{aligned}
\end{equation}

\begin{proposition}
    The equilibrium in feedback strategies is equivalent to the equilibrium calculated for open-loop strategies, i.e., for every initial condition $\left(S_0^c,S_0^f\right)$,  the open-loop form of feedback equilibrium strategies coincide almost everywhere with open-loop equilibrium strategies.
\end{proposition}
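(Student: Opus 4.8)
The plan is to exploit the myopia of the fringe, established in Proposition \ref{prop_JBR}, to collapse the feedback differential game into exactly the single-agent optimal control problem already solved for the open-loop case, and then to close the argument with the principle of optimality. First I would dispose of the followers. By Proposition \ref{prop_JBR}(a), a profile $[\bar q_i^f]_{i\in F}$ belongs to $\textup{RF}(q^c)$ if and only if, for a.e.\ $t$, the static profile $q^f_\cdot(t)$ is a Nash equilibrium of the static game at the current state, i.e.\ $q^f_\cdot(t)\in\textup{SRF}(q^c(t),S^f(t))$. This characterization is intrinsically \emph{pointwise in time}: the fringe's equilibrium decision at $t$ depends only on the current leader extraction $q^c(t)$ and the current stock $S^f(t)$, not on the whole trajectory. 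Hence, evaluating along any admissible trajectory, the realized fringe aggregate equals the value $\textup{JSRF}(q^c(t),S^f(t))$, which by Proposition \ref{prop_JBR}(c) is unique. In particular, the open-loop form \eqref{eq_feedback_OL_forms} of any feedback-equilibrium fringe strategy and the open-loop-equilibrium fringe strategy induce the same aggregate at a.e.\ $t$, irrespective of the information structure.

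Second, I would reduce the leader's problem. The decisive point is that, because each fringe member is myopic---its payoff is instantaneous and it does not anticipate the leader's future behaviour---a commitment by the cartel to a state-contingent (feedback) reaction confers no additional leverage over the fringe beyond committing to the induced realized path. Concretely, given the static map $q^c(t)\mapsto\textup{JSRF}(q^c(t),S^f(t))$, the cartel's problem \eqref{eq_kartel_calka} is a single-agent optimal control problem in \emph{both} information structures. Since the dynamics \eqref{zmiana_st_c}, \eqref{zmiana_st_f} are deterministic and the fringe aggregate depends only on the current pair $(q^c(t),S^f(t))$, the set of achievable trajectories $(S^c,S^f)$ and payoffs realizable by feedback controls coincides with that realizable by open-loop controls. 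Therefore the two formulations share the same optimal value and the same set of optimal trajectories.

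Third, I would combine the structural simplifications of Remark \ref{rem_costate} with the principle of optimality. Because $H^c$ is independent of $S^c$ and $\textup{JSRF}$ is constant within each phase, the costate $\gamma^c$ is constant per phase and the phase-wise maximizer of \eqref{eq_hamiltonian_cartel} is the uniquely determined control derived in the preceding propositions---namely \eqref{opt_ster_trojka} in phase $S$ and the boundary controls $\frac{\alpha-b}{\beta}$ and $\frac{\alpha-k^f}{\beta}$ in phases $L$ and $C$ (Propositions \ref{stw_o_ograniczeniu_wszedzie_brzegowa} and \ref{prop_k_f_boundary})---with switches governed by the state conditions $S^f=0$ and $S^c=0$. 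Since both the switching rules and the phase-wise controls are functions of the current state, the open-loop optimal solution is itself realizable as a feedback policy and is time-consistent. A feedback equilibrium, being optimal from \emph{every} initial condition, is in particular optimal from the fixed $(S_0^c,S_0^f)$; by the principle of optimality its open-loop form solves the same control problem, and by the uniqueness of the optimal control established phase by phase it coincides a.e.\ with the open-loop equilibrium for the cartel. The fringe parts then coincide at the level of the aggregate by the first step.

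The hard part will be the rigorous justification of the second step: showing that the myopia of the followers genuinely eliminates the intertemporal manipulation channel that ordinarily makes feedback and open-loop Stackelberg equilibria differ. This amounts to proving that no feedback policy of the cartel can influence the fringe beyond the current instantaneous price, which is precisely where the static characterization of Proposition \ref{prop_JBR} must be invoked to convert what is a priori a differential game with a leader and a continuum of followers into an ordinary optimal control problem for the cartel.
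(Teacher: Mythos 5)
Your proposal is correct and follows essentially the same route as the paper: decompose the fringe's equilibrium behaviour into static equilibria so that the aggregate reaction depends only on the current value of the cartel's control, then observe that the leader's problem collapses to a deterministic optimal control problem for which open-loop and feedback optima coincide. The only substantive difference is that the paper justifies the static decomposition for \emph{feedback} fringe equilibria by invoking Theorem 5.1(d) of \cite{wiszniewska2002static} (the feedback analogue of the result underlying Proposition \ref{prop_JBR}(a)), whereas you reuse Proposition \ref{prop_JBR}(a) directly, which is stated for open-loop profiles; this is exactly the point you flag as the ``hard part'' and is closed by that citation.
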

\begin{proof}
Consider a feedback Stackelberg equilibrium $\left(\bar q^{c,F},  \left[ \bar q_i^{f,F}\right]_{i\in F}\right)$ and an initial condition $\left(S_0^c,S_0^f\right)$  leading to the trajectory of the state $\left(\bar S^c,\bar S^f\right)$. Consider the game played by the fringe firms given $\bar q^{f,F}$. It is a differential game with a set of players $[0,1]$ and the decision space $\mathbb R_+$ and all the functions are continuous, so, assumptions of Theorem 5.1(d) of \cite{wiszniewska2002static} are satisfied. 
Thus, for a.e. $t$, each feedback Nash equilibrium is composed of static Nash equilibria at time $t$ and state $\left(\bar S^c(t), \bar S^f(t)\right)$. Let the symbol OL of a profile denote its open-loop form, i.e., strategies of all players rewritten as a function of time. The best response of a fringe firm $i$ is $m_i^f$ if $\textup{OL}\left(q^{c,F}\right)(t)+\textup{OL}\left(Q^{f,F}\right)(t)$ results in $p(t)>k^f$ , is unique and equal to $m_i^f$, and this holds for every $i$. Otherwise, if $p(t)<k^f$, then firm $i$ will not produce, and this holds for every $i$. If $p(t)=k^f$, then firm $i$ is indifferent between arbitrary levels of production as the profit is 0. So, the joint reaction of the fringe with tie-breaking rule is given by \eqref{eq_Q^f}. At the aggregate level, ``it is not profitable to produce'' reduces to $\textup{OL}\left(q^{c,F}\right)(t)\geq \frac{\alpha -k^f}{\beta}.$ ``It is profitable to produce'' implies that $\textup{OL}\left(q^{c,F}\right)(t)<\frac{\alpha -k^f-\beta M^f}{\beta}$ and there are no equilibria with $\textup{OL}\left(q^{c,F}\right)(t)$ in between. So, the behavior of the fringe depends on $q^{c,F}$ only by the open-loop form of $\textup{OL}\left(q^{c,F}\right)(t)$.
After incorporating the reaction of the fringe, the leader's problem becomes purely an optimal control problem. Thus, the optimal control in the class of open-loop controls, is the open-loop form of the optimal feedback control.
\end{proof}

\subsection{Comparison of cartel's net profit between strategies}

In this subsection, we calculate the discounted net profits for ''deter'', ''share'' for fringe depleting first \footnote{We do not calculate the profit for the other type of ''share'' strategy, as in Section \ref{s_formulation}, we assumed that the cartel will deplete its resources later than the fringe firms.} and ''wait''. Then, we compare and discuss them to determine what information about the model parameters the cartel should possess to indicate the optimal strategy at the beginning of the game.

\subsubsection{Optimal profit over the class of ''Deter'' strategies (see page \pageref{def_scenarios})}\label{s1}

We first calculate the profit for the optimal ''Deter'' strategy of the cartel. For this strategy, finding the result involves calculating the integral:
\begin{equation*}
\pi^c_1:=\int_{0}^{\tilde{T}^c}e^{-rt}\big(\alpha-\beta \cdot \frac{\alpha-k^f}{\beta}-k^c\big)\cdot\frac{\alpha-k^f}{\beta} dt.
\end{equation*}
After evaluating the integral of the elementary function, we obtain:
\begin{equation*}
-\frac{1}{r}e^{-rt}(k^f-k^c)\cdot\frac{\alpha-k^f}{\beta} \Big |^{\tilde{T}^c}_0=\frac{1}{r}(k^f-k^c)\cdot\frac{\alpha-k^f}{\beta}\cdot(1-e^{-r\tilde{T}^c}),
\end{equation*}
where $\tilde{T}^c= \frac{S_0^c \cdot \beta}{\alpha - k^f}$. Therefore, the net profit in the first case is given by:
\begin{equation}\label{zysk1}
\pi^c_1=\frac{1}{r}(k^f-k^c)\cdot\frac{\alpha-k^f}{\beta}\cdot(1-e^{-r\frac{S_0^c \cdot \beta}{\alpha - k^f}}).
\end{equation}

\subsubsection{Profit for optimal over ''Share'' strategies of the cartel (see page \pageref{def_scenarios})}\label{s2}

The discounted net profit for the cartel under ''share'' strategy with fringe depleting first is calculated based on the integral
\begin{equation*}
\pi^c_2:=\int_0^{T^f}e^{-rt}f^c\big(t,S^c(t),q^c(t)\big)dt+\
\int_{T^f}^{T^c}e^{-rt}f^c\big(t,S^c(t),q^c(t)\big)dt.
\end{equation*}

Let $I_1^c=\int_0^{T^f}e^{-rt}f^c\big(t,S^c(t),q^c(t)\big)dt$ and $I^c_2=\int_{T^f}^{T^c}e^{-rt}f^c\big(t,S^c(t),q^c(t)\big)dt$. We have shown that in the phase where both players extract resources simultaneously, the optimal cartel control $q^c(t)$ is $\frac{\alpha-k^c-M^f\beta}{2\beta}-\frac{(b-k^c)}{2\beta}\cdot e^{r(t-T^f-T^c)}$. In the case where the cartel remains the only player in the oil market, we have $q^c(t)=\frac{\alpha-b}{\beta}$.

Below, we have calculated the components of the integral $\pi^c_2$, namely $I_1^c$ and $I_2^c$. Let us start by computing the integral $I_1^c$. Here is the explicit form of the integral $I_1^c$:
\begin{equation*}
\begin{aligned}
I_1^c = \int_0^{T^f} e^{-rt} \, 
& \Biggl(\frac{\alpha + k^c + M^f \beta - 2 k^c}{2} 
   + \frac{(b - k^c) e^{\, r(t - T^f - T^c)}}{2} \Biggr) \\ 
& \cdot \Biggl(\frac{\alpha - k^c - M^f \beta}{2 \beta} 
   - \frac{b - k^c}{2 \beta} e^{\, r(t - T^f - T^c)} \Biggr) \, dt.
\end{aligned}
\end{equation*}

After evaluating the integral of the exponential function, we obtain:
\begin{equation*}
I_1^c=-\frac{\alpha-k^c-M^f\beta}{2\beta r}\cdot\frac{\alpha-k^c+M^f\beta}{2}\cdot e^{-rt}\Big|_0^{T^f}+\frac{b-k^c}{2}\cdot\frac{\alpha-k^c-M^f\beta}{2\beta}\cdot e^{-r(T^f+T^c)}\cdot t \Big|_0^{T^f}-
\end{equation*}
\begin{equation*}
-\frac{\alpha-k^c+M^f\beta}{2}\cdot \frac{b-k^c}{2\beta}\cdot e^{-r(T^f+T^c)}\cdot t \Big|_0^{T^f}+\frac{1}{r \beta}\Big(\frac{b-k^c}{2}\Big)^2 e^{r(t-2T^f-2T^c)}\Big|_0^{T^f}.
\end{equation*}
After simplifying, we obtain:
\begin{equation*}
I_1^c=\frac{\alpha-k^c-M^f\beta}{2\beta r}\cdot\frac{\alpha-k^c+M^f\beta}{2}\cdot\Big(1-e^{-rT^f}\Big)+\frac{b-k^c}{2}\cdot\frac{\alpha-k^c-M^f\beta}{2\beta}\cdot e^{-r(T^f+T^c)}\cdot T^f-
\end{equation*}
\begin{equation*}
-\frac{\alpha-k^c+M^f\beta}{2}\cdot \frac{b-k^c}{2\beta}e^{-r(T^f+T^c)}\cdot T^f+\frac{1}{r \beta}\Big(\frac{b-k^c}{2}\Big)^2 e^{-r(T^f-2T^c)}\Big(1-e^{-rT^f}\Big).
\end{equation*}

Now, we calculate $I_2^c$, taking into account the optimal level of cartel control in this period:
\begin{equation*}
I_2^c=\frac{1}{r}(b-k^c)\cdot\frac{\alpha-b}{\beta}\cdot\big(e^{-rT^f}-e^{-rT^c}\big),
\end{equation*}
where $T^f=\frac{S_0^f}{M^f}$ and $T^c$ is the solution to equation \eqref{rownanie_z_tece}. Calculating the net profit $\pi^c_2$ requires summing the two components $I_1^c$ and $I_2^c$. Therefore,
\begin{equation*}
\pi^c_2=\frac{\alpha-k^c-M^f\beta}{2\beta r}\cdot\frac{\alpha-k^c+M^f\beta}{2}\cdot\Big(1-e^{-rT^f}\Big)+\frac{b-k^c}{2}\cdot\frac{\alpha-b-M^f\beta}{2\beta}\cdot e^{-r(T^f+T^c)}\cdot T^f-
\end{equation*}
\begin{equation*}
-\frac{\alpha-k^c+M^f\beta}{2}\cdot \frac{b-k^c}{2\beta}e^{-r(T^f+T^c)}\cdot T^f-\frac{1}{r \beta}\Big(\frac{b-k^c}{2}\Big)^2 e^{-rT^f-2T^c}\Big(1-e^{-rT^f}\Big)+
\end{equation*}
\begin{equation}\label{zysk2}
+\frac{1}{r}(b-k^c)\cdot\frac{\alpha-b}{\beta}\cdot\big(e^{-rT^f}-e^{-rT^c}\big).
\end{equation}

\subsubsection{Profit for optimal over ''Wait'' class of strategies (see page \pageref{def_scenarios})}\label{s3}
We calculate the profit for the third cartel strategy. For this strategy, finding the result involves computing the integral:
\begin{equation*}
    \pi^c_3:=\int_{T^f}^{\hat{T}^c}e^{-rt}\big(\alpha-\beta \cdot \frac{\alpha-b}{\beta}-k^c\big)\cdot\frac{\alpha-b}{\beta} dt.
\end{equation*}
After evaluating the integral, we obtain:
\begin{equation*}
    -\frac{1}{r}e^{-rt}(b-k^c)\cdot\frac{\alpha-b}{\beta} \Big |_{T^f}^{\hat{T}^c}=\frac{1}{r}(b-k^c)\cdot\frac{\alpha-b}{\beta}\cdot(e^{-rT^f}-e^{-r\hat{T}^c}),
\end{equation*}
where $\hat{T}^c= T^f+\frac{S_0^c \cdot \beta}{\alpha - b}$ and $T^f=\frac{S_0^f}{M^f}$. Therefore, the net profit in the third case is:

\begin{equation}\label{zysk23}
\begin{aligned}
\pi^c_3 
&= \frac{1}{r} (b - k^c) \cdot \frac{\alpha - b}{\beta} 
   \cdot \Biggl( e^{-r T^f} - e^{-r \Bigl(T^f + \frac{S_0^c \cdot \beta}{\alpha - b}\Bigr)} \Biggr) \\[1mm]
&= \frac{1}{r} (b - k^c) \cdot \frac{\alpha - b}{\beta} \cdot e^{-r T^f} 
   \Biggl( 1 - e^{-r \left(\frac{S_0^c \cdot \beta}{\alpha - b}\right)} \Biggr).
\end{aligned}
\end{equation}

\subsubsection{Conclusions based on calculated profits}\label{wnioski}

By comparing the explicit forms of the discounted net profits obtained for each strategy, given by \eqref{zysk1}, \eqref{zysk2} and \eqref{zysk23}, it is evident that they depend collectively on the following set of parameters: $k^c, k^f, b$, $\alpha, S_0^c, S_0^f, r$ and $M^f$. We arrive at the following conclusion: for the cartel to choose its optimal strategy, it must have knowledge of values of all the parameters included in the model, including those that control the actions of the rival extractors. Therefore, full information is necessary. To select the optimal cartel strategy, one could calibrate the model based on the most current data available (similarly to the approach taken in the work of \cite{BENCHEKROUN2020102384}, where, for example, data from the Energy Information Administration for the year 2013 were utilized). 

Instead of calibrating the model and obtaining static results, we can also conduct a sensitivity analysis based on the values of individual parameters. Thus, in the next section, we perform a graphical sensitivity analysis of the calculated discounted net profits obtained for each strategy.

\subsection{Graphical analysis}\label{s_graphical}

In this section, we present a graphical sensitivity analysis of cartel’s discounted net profits obtained from selecting
given strategy. The strategies are characterized in Subsection \ref{podrozdzial_strategie}. We use the following initial set of parameter values that constitute a starting point of our analysis. Further, we calibrate the model for manipulating set of chosen parameters at once. To this end, figure captions will include in parentheses the values of parameters that differ from the benchmark. If no values are given in parentheses, this means that the benchmark calibration is used.

\newpage
\begin{table}\label{table_benchmark}
\centering
\begin{tabular}{ |p{3cm}||p{6cm}|p{1.5cm}|p{1.5cm}|  }
 \hline
 \textbf{Parameters} & \textbf{Description} &\textbf{Value}&\textbf{Unit}\\
 \hline
 $\alpha$   & choke price    &225.5&   US\$/bbl\\
 $\beta$&   slope inverse demand function  & 4.3   &US\$/bbl\\
 $b$ & renewables price & 102.5&  US\$/BOE\\
 $k^c$   & marginal extraction cost of the cartel  & 18 &  US\$/bbl\\
 $k^f$ &   marginal extraction cost of the fringe  & 62.5 & US\$/bbl\\
 $r$ & interest rate  & 0.028   & percentage\\
 $S^c_0$ & initial stock of the cartel  & 1212 & billion bbl\\
 $S^f_0$ & initial stock of the fringe  & 619.5 & billion bbl\\
 \hline
 
\end{tabular}
\centering \caption{Benchmark ("status quo") calibration}
\end{table}

For the next three figures, the maximal depletion time of the cartel---being the end point of the analysis---was calculated numerically and is the solution to equation \eqref{rownanie_z_tece}.

Figure \ref{graph_q} illustrates the relationship between the optimal control of the cartel corresponding to a given strategy and the duration of the game, with the vertical lines representing discontinuity points. From now on, we can interpret optimal controls as extraction rates (as interpreted in \cite{BENCHEKROUN2020102384}). The length of the horizontal axis represents the duration of the nonrenewable resource game
(in years) and the length of the vertical axis represents the value of cartel’s optimal control in Strategy $i=1,2,3$, which is a dimensionless quantity. We can see that after about 64.5 years the resource of the cartel is depleted. Clearly, in case of the first and third
strategy, the value of cartel’s optimal control is constant over time corresponding to a given strategy. In case of the second strategy, the optimal control of the cartel exponentially decreases over time to the depletion time of the fringe, given by \eqref{teef}.
Afterwards, as cartel's optimal control strategy is a piecewise function, its value jumps to a constant value $\frac{\alpha-b}{\beta}$ corresponding to the price $p(t)=b$ for $t \in L$ (see: Notation \ref{ozn_fazy}). 

On the other hand, Figure \ref{graph_p} illustrates the relationship between price functions corresponding to the optimal cartel controls and the fringe in the given strategy. The vertical lines mark points of discontinuity. Obviously, in the case of the first and third strategies, the price function remains constant, which follows from the price-related assumptions underlying these strategies. In case of the first strategy, in the moment of cartel's depletion, the price jumps to $\min\left(\alpha-\beta M^F, b\right)=b$, as the value of $M^f$ is relatively small. In the second strategy, the price increases exponentially over time to the fringe depletion moment and then is constant and equal to $b$. 

Figure \ref{graph_whole_profit} shows the present value of accumulated profits up to time $t$ resulting from undertaking each type of strategy (which are functions of optimal controls and price paths). Each curve is an increasing function as we consider accumulated profits with respect to time. Discernibly, the second strategy yields the highest profit, approximately 53,500, followed by the first strategy with about 35,600. The lowest discounted profit, around 32,300, is implied by the third strategy.

\begin{figure}[!htb]
    \centering 
\includegraphics[width=10cm]{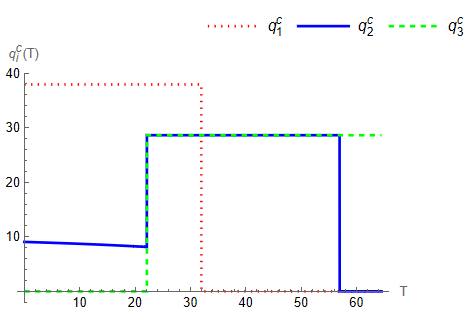}
\caption{Cartel’s  optimal control in three types of strategies: 1. ''Deter'', 2. ''Share'', 3. ''Wait''}\label{graph_q}\
\end{figure}

Now, we analyze how sensitive the discounted net profit of the cartel is to the change of following parameters: $M^f$, $S^f$, $k^f$ and $k^c$.

Figure \ref{graph_pi_mf} shows how a change in the parameter $M^f$, being the optimal control of the fringe within a simultaneous extraction phase, affects the value of the cartel's net profit corresponding to a given strategy. The value of $M^f$ is strictly bounded by $\frac{\alpha-b}{\beta}$, what follows from imposed condition \eqref{odcinek_zbior} on the set of control parameters of the cartel. Using the data concerning calibration of the model from \cite{BENCHEKROUN2020102384}, we obtain $\frac{\alpha-b}{\beta}\approx 28.6$, thus we bound the horizontal axis to a value of 28.5. We can observe that within the first strategy, the value of the $\pi^c_1(M^f)$ is independent of the $M^f$ value, so the former remains constant. On the other hand, the value of cartel's discounted net profit in case of the second and third strategy is a decreasing and increasing function of $M^f$, respectively. However, regardless of the value of $M^f$ lying between 0 and 28.5, cartel's net profit obtained from choosing: the second strategy is the highest, the first strategy is the second highest and the third strategy is the lowest.

Figure \ref{graph_pi_mf_r_sens} (left) shows the relationship between the change of $M^f$ parameter and the net profit of the cartel, when the value of an interest rate is set to be almost ten times lower than before (we set $r=0.2 \%$). We can observe that in such market conditions with $M^f$ values being remarkably high ($25.5$ or more), we get an unusual result---the best strategy for the cartel is to wait until the fringe depletes its resource and after that the cartel starts mining its resource and selling it at price $b$ (the close-up of intersection point is shown in Figure \ref{graph_pi_mf_high_r_closeup}, left). The profit resulting from taking the third strategy is highly sensitive on the change of $r$ parameter. When the value of $M^f$ is about $2.0$, the net profit from the third strategy is even the lowest from all three options. After rapid growth of considered profit function for relatively small values of $M^f$, its growth becomes milder for higher values of $M^f$.

For transparency of exposition, we now present a graph depicting a situation, where the strategy "Deter" is the optimal one. Figure \ref{graph_pi_mf_r_sens} (right) shows the relationship between the change of $M^f$ parameter and the net profit of the cartel, when the value of an interest rate is set to be almost ten times higher than before (we set $r=20 \%$) i.e., when agents are highly impatient. We can observe that when the parameter equal to the optimal control of the fringe is close to its upper bound (close to 28.6), the first strategy "Deter" indeed prevails over the second and third strategies, which is further shown in Figure \ref{graph_pi_mf_high_r_closeup} (right).   

\begin{figure}[!htb]
    \centering 
\includegraphics[width=10cm]{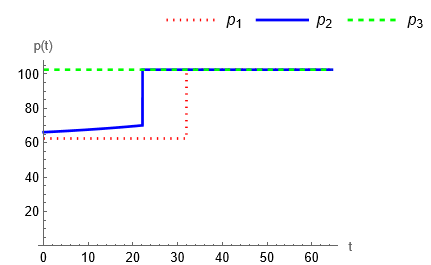}
\caption{Price corresponding to cartel’s controls optimal in three classes of strategies: 1. ''Deter'', 2. ''Share'', 3. ''Wait''}\label{graph_p}
\end{figure}

\begin{figure}[!htb]
    \centering 
\includegraphics[width=10cm]{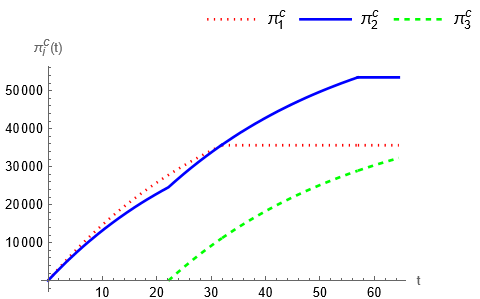}
\caption{Cartel’s optimal accumulated profit over time in three classes of strategies: 1. ''Deter'', 2. ''Share'', 3. ''Wait''}\label{graph_whole_profit}\
\end{figure}

\begin{figure}[!htb]
    \centering 
\includegraphics[width=10cm]{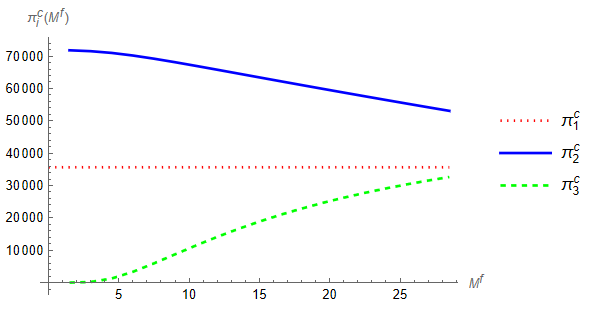}
\caption{Relationship between cartel's net profit and value of $M^f$ parameter (for $r=0.028$) }\label{graph_pi_mf}
\end{figure}

\begin{figure}[!htb]
    \centering 
\includegraphics[width=6.5cm]{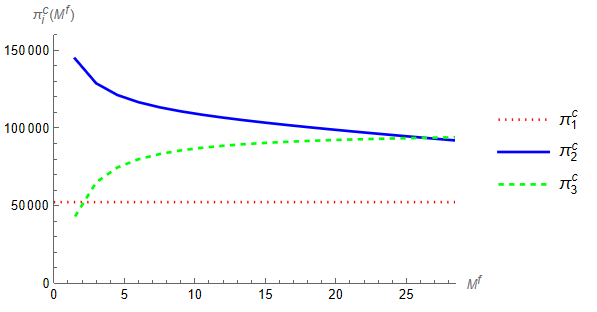}
\includegraphics[width=6.5cm]{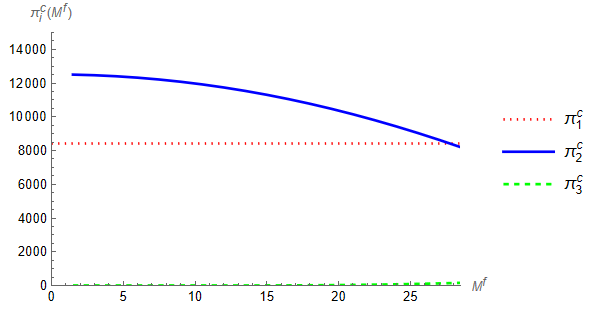}
\caption{Relationship between cartel's net profit and value of $M^f$ parameter (left: for $r=0.002$, right: for $r=0.2$ )}\label{graph_pi_mf_r_sens}
\end{figure}

\begin{figure}[!htb]
    \centering 
\includegraphics[width=6.5cm]{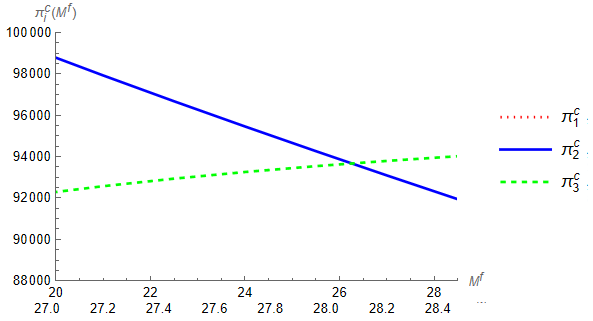}
\includegraphics[width=6.5cm]{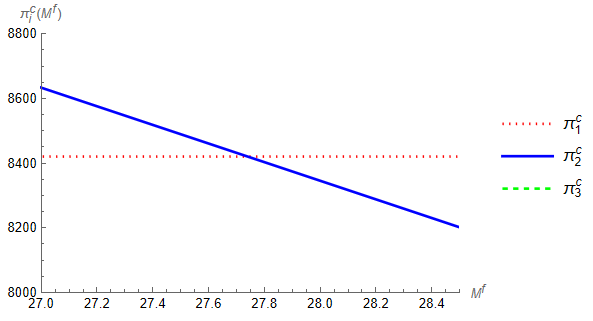}
\caption{Close-up on the intersection point of cartel's net profits (left: for $r=0.002$, right: for $r=0.2$)}\label{graph_pi_mf_high_r_closeup}
\end{figure}


\begin{figure}[!htb]
    \centering 
\includegraphics[width=6.7cm]{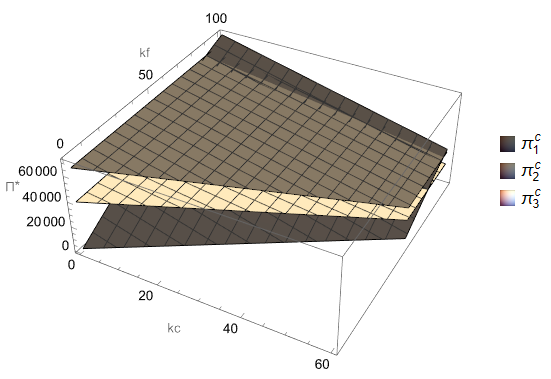}
\includegraphics[width=6.7cm]{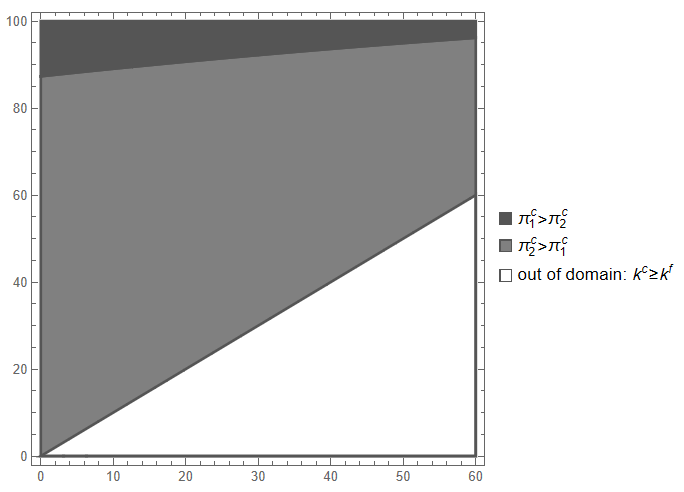}
\caption{Relationship between cartel's net profit and value of $k^f$ (range $0-100$) and $k^c$ (range 0-60) (left: 3-d plot, right: region plot)}\label{graph_pi_kc_kf}
\end{figure}

Figure \ref{graph_pi_kc_kf} (left) illustrates the relationship between the profit of the cartel corresponding to a given strategy and the marginal extraction cost: $k^f$ and $k^c$. We account for the assumption posed in Section \ref{s_formulation} that $k^f<k^c$ by excluding from the domain of cartel's profit function the half-plane $\{(k^c,k^f,\pi^c): k^c\geq k^f\}$. Interestingly, the discounted net profit of the cartel in case of the first strategy can be the highest, when the value of $k^f$ is relatively high to the marginal cost of the fringe, i.e. $k^f$. Thus, cartel's net profit is highly sensitive to the relation between $k^f$ and $k^c$. Based on this plot, we also conclude that cartel's net profit value is highly sensitive of $k^f$ and not that much on the value of $k^c$. Moreover, regardless of the value of marginal costs, the profit obtained from choosing the second strategy is higher than the profit obtained from the third strategy. 

For the purpose of a more detailed marginal costs profile of cartel's profit, we analyze Figure \ref{graph_pi_kc_kf} (right) -- this plot shows the regions where the profit obtained from choosing strategy $i$ is higher than one from choosing strategy $j$, such that $i,j=1,2,3$ and $i\neq j$. The numerical relationship between $k^f$ and $k^c$, resulting in the highest profit obtained from strategy $i$, is disclosed in Appendix \ref{s_graphical_appendix}.

Figure \ref{graph_pi_s0f} illustrates the impact of $S^f_0$ (the initial resource of the fringe) on the cartel's net profit. We do not conduct sensitivity analysis with respect to $S^c_0$, as the cartel is aware of its initial resource level. We can see that when a resource level is relatively low, then the third strategy, i.e. wait until the fringe depletes and then extract and sell at the price $b$ results in higher profit than the first strategy, i.e. do not allow the fringe to enter the market and sell your product at price $k^f$ until your resources are depleted. Nevertheless, both of these strategies are outperformed by the second strategy, i.e., first extract your resources simultaneously and after fringe's depletion, continue extracting but sell the product at the price $b$. The profit resulting from choosing the second strategy nearly does not change with the change of $S^f_0$ parameter. Thus, the size of joint resource of the fringe in considered range, that is their market power, almost does not affect the profit -- cartel's net profit is low-sensitive to the change of fringe's initial stock. Should we consider a bigger initial resource stock of the fringe, it would not be much realistic as the fringe has significantly smaller resource level (in particular it should not be higher than the initial resource level of the cartel).

\begin{figure}[!htb]
    \centering 
\includegraphics[width=10cm]{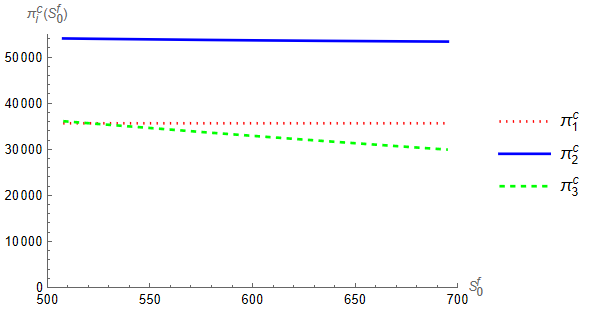}
\caption{Relationship between cartel's net profit and value of $S_0^f$ (the initial level of fringe's resource stock).}\label{graph_pi_s0f}
\end{figure}

\section{Conclusions and further research}\label{s_conclusion}
\subsection{Conclusions}
In this study, we have presented a model of renewable resource extraction with one cartel and a continuum of fringe extractors and a backstop technology substantially more expensive than cost of resource extraction from the most costly deposit. This differs from the standard cartel-fringe interaction with fringe treated as one decision maker. Resigning from the standard ``representative fringe firm'' approach allowed us to both model interactions among the fringe firms and model them as rational agents. In our model, the cartel has a separate deposit, while the fringe members pump from the same deposit, although each of them has a separate well. We have obtained myopia of the fringe as the result of ``the tragedy of the commons'': if they decided cooperatively, they would reduce current extraction, which would increase the price and consequently, profit, but for each single fringe firm, extracting more increases current profit as it has only small influence on price, while the cost of decreasing the resource stock is shared between all fringe members.  Besides, we relaxed the standard price continuity assumption and we imposed constraints on strategies.
We have calculated the Stackelberg equilibrium of this game with the cartel as the leader and the fringe as multiple followers. We obtained that the joint reaction function of fringe firms is unique and discontinuous despite the components of the model besides those mentioned above are similar to those analyzed in the literature, and identical with \cite{BENCHEKROUN2020102384}. 
As far as final results are concerned, it is typical for cartel-fringe extraction models in the literature that both firms extract at the same stage until the fringe depletes. However, besides this typical case, we obtained two nonstandard results: for some parameters values, it is better for the cartel to wait until the fringe depletes and then start extracting, while for some other parameters values, it is better for the cartel to set a dumping price to deter the higher-cost fringe from extraction.

\subsection{Immediate extensions and  further research}
There are two immediate model extensions. First, we can introduce increasing or decreasing returns to scale of the cartel's technology by subtracting a quadratic component from the costs, $\frac{\xi^c}{2} \left(q^c\right)^2$, with $\xi$ positive or negative, respectively. Then, the results do not change substantially unless this changes the property that the cartel's current payoff, taking into account also the influence on price, remains concave in $q_i^c$. Second, we can consider increasing returns to scale of the fringe in an analogous way, by subtracting a quadratic component from the costs, $\frac{\xi^f}{2} \left(q_i^f\right)^2$. This changes only the cartel's strategy and payoff in the ''deter'' strategies, as the required maximal price changes to $k^f-\xi^f\max_i\{m^f_i\}$. What is new is the fact that it may be too costly to deter all fringe members if their pumping constraints are not identical.

Our model can be a source for further research.  


A further recommendation for model development is to introduce the possibility of storing the extracted goods. Similarly to \cite{benchekroun2015harvests}, one may consider that each of the players can store their extracted oil at some storage costs, with constraints on the storage capacity and/or costs of building such an oil terminal. If the cartel (leader) notices that the fringe limits sales and stores its extracted oil, then it faces market disequilibrium and should recalculate its optimization problem, taking into account the storage cost of the fringe. However, the followers have limited storage capacity, which leads to a small change in value of cartel's net profit. If the storage capacity is small, the market price can still remain $b$ after the depletion of fringe's resource if the cartel slightly delays its extraction.

One could also consider a stochastic control problem for this model. The modeling of resource stocks level under this framework could address the aspect of uncertainty due to disturbances occurring during the process of extracting nonrenewable resources. Therefore, stochastic differential equations could be a good framework for modeling resource extraction with potential problems with the extraction process itself \citep[see Chapter 3.4 in][]{yong2012stochastic}.

On the other hand, in the considered model one could assume that the indifference price $b$ is time dependent. In this framework, the function $b(t)$  could be exponential and decreasing, as proposed by e.g. \cite{JAAKKOLA2019101}. The latter assumption is justified---over time, the firms can develop more efficient sourcing technologies, which leads to increased competitiveness of a product.

Another extension is to add state-dependent constraints on controls for both the cartel and the fringe for low state of their deposit: $q_i^f(t)\leq \min\{m_i^f,\nu_i^f S^f(t)\}$, $q^c(t)\leq \min\{m^c, \nu^c S^c(t)\}$ (where $\nu_i^f$ and $\nu^c$ are some positive constants). This makes full depletion of the resource impossible at any finite time due to technical constraints. An analogous result but due to economic constraints can be obtained for allowing the marginal cost to be decreasing in state, especially for state values close to 0. 

We can also decrease the number of the fringe members using the commons deposit and then take into account the spatial allocation of the fringe's resource with flows among wells of various players determined by adjacency matrix, as in e.g. \cite{fabbri2024competition}. In this case, the fringe behavior is no longer myopic.

\section*{Acknowledgments}
This research was financed by a grant 2019/33/B/HS4/02953 of the National Science Center, Poland.

\section*{Declarations}

The authors declare no conflict of interest.

\newpage



\begin{appendices}

\setcounter{equation}{0}
\renewcommand{\theequation}{\Alph{section}\arabic{equation}}

\section{Methodology}\label{s_methodology}
The presence of the fringe, whose decisions may be discontinuous, result in discontinuity of the functions defining the optimal control problem of the cartel. Therefore, the standard usual tools on existence of the optimal control and the necessary conditions do not work. Thus, to prove the results, we decompose the initial problem into two coupled problems: a fixed time horizon free endpoint optimal control problem and free terminal time defined by terminal constraint. 

To do this, we introduce two versions of the Pontryagin's Maximum Principle (abbreviated as PMP) that we use in further analysis: first for the problem with free terminal time and fixed terminal state, and latter for the problem with fixed terminal time and free terminal state. Then, present the theorem about the existence of optimal control.

\subsection{Tools}
In the statements provided below, we consider the Bolza problem to maintain consistency with the later part of the work. The general form of the Bolza problem with a fixed terminal time $T_1$
is derived from the work of \cite{bressan2007introduction}, but we present it in a simplified version.

Let $x(t) \in \mathbb{R}$ and $f:\mathbb{R}^3 \rightarrow \mathbb{R}$, $\varphi:\mathbb{R}^3\ \rightarrow \mathbb{R}$, $g:\mathbb{R}^2 \rightarrow \mathbb{R}$. We maximize the payoff given by the functional

\begin{equation}\label{Bolza_funkcjonal}
    J\big(x(\cdot),u(\cdot)\big)=\int_0^{T_1}e^{-rt}f\big(t, x(t), u(t)\big)dt+g\big(T_1,x(T_1)\big)e^{-rT_1},
\end{equation}
subject to
\begin{equation}\label{bolza_zmianastanu}
    \dot{x}(t)=\varphi\big(t, x(t), u(t)\big) \quad\textit{a.e.}
\end{equation}
with the initial condition $x(0)=x_0$,\\
where $r>0$ is continuous compounding interest rate.

Additionally, there are constraints on control parameters $u(t) \in [0,M]$ for all $t$, where $M>0$. 
We assume that control functions $u$ are measurable and the corresponding trajectory $x$ is absolutely continuous. 

In the PMP theorem with free terminal time and fixed terminal state, we consider constraints on the terminal state. These constraints are posed on control parameters for which there exists a depletion time of resource $T$ and function $\psi:\mathbb{R}^2 \rightarrow \mathbb{R}$ such that
\begin{equation}\label{bolza_stan}
    \psi\big(T^*,x(T^*)\big)=0.
\end{equation}
In the considered case $x(T^*)$ represents the terminal state of resources for a given agent. Moreover, $T^*$ is the first such moment in the game when the equation \eqref{bolza_stan} is satisfied.

Before we introduce two versions of PMP, we define present value of Hamiltonian function.
\begin{definition}\label{definicja_hamiltonian}
    A Hamiltonian $H: \mathbb{R}_+ \times \mathbb{R} \times [0,M] \times \mathbb{R}\times \mathbb{R}_+ \rightarrow \mathbb{R}$ is the function 
    \begin{equation*}
    H(t,x,u,\gamma, \lambda_0):=\lambda_0 e^{-rt} f(t,x,u) + \langle \gamma,\varphi(t,x,u) \rangle,
\end{equation*}
where $\gamma \in \mathbb{R}$ and $\lambda_0 \in \mathbb{R}_+$. In case when $\lambda_0=1$, we omit $\lambda_0$ for simplicity's sake.
\end{definition}

\subsubsection{Pontryagin's Maximum Principle with free terminal time and fixed terminal state}\label{appendix_fixed_state}

The Maximum Principle for the Bolza problem with free terminal time and fixed terminal state is derived from \cite{bressan2007introduction}, labeled there as Theorem 6.5.1. In this work, we have presented this theorem in the maximization version and in a simplified form.

\begin{theorem}[PMP with free terminal time and fixed terminal state]\label{ZMP_zwiazane}
Let the functions $f,\varphi$ be continuous with respect to all their variables and of class $C^1$ with respect to $(t,x)$. Let the control $u_*$ 
in the Bolza problem \eqref{Bolza_funkcjonal}-\eqref{bolza_stan} be bounded and let the gradient $\nabla\psi\left(T^*,x(T^*)\right)$ be nonzero. 
Then there exists a nontrivial, absolutely continuous function $\gamma$ and constants $\lambda_0 \geq 0,\lambda_1 \in \mathbb{R}$ such that for almost all $t \in [0,T^*]$
\\
\begin{equation}\label{zmp_zwiazane_1}
    \dot{\gamma}(t)=-\frac{\partial H\big(t, x_*(t), u_*(t), \gamma(t), \lambda_0\big)}{\partial x},
\end{equation}

 \begin{equation}\label{argmax_u}
u_*(t) \in \Argmax_{u \in [0,M]}H\big(t, x_*(t), u,\gamma(t)\big),
\end{equation}
 \begin{equation}
   \gamma(T^*)=\lambda_1 \frac{\partial \psi}{\partial x}\big(T^*,x_*(T^*)\big),
\end{equation}
\begin{equation}\label{max_zero}
    \max_{u \in [0,M]} H\big(T^*,x_*(T^*),u,\gamma(T^*),\lambda_0\big)=-\lambda_1 \frac{\partial \psi}{\partial t}\big(T^*,x_*(T^*)\big),
\end{equation}
\begin{equation}
    \frac{d}{dt}H\big(t,x_*(t),u_*(t),\gamma(t),\lambda_0\big)=\big\langle \gamma, \frac{\partial }{\partial t} \varphi\big(t,x_*(t),u_*(t)\big)\big \rangle + \lambda_0\frac{\partial}{\partial t} f\big(t,x_*(t),u_*(t)\big).
\end{equation}
\end{theorem}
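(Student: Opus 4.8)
The plan is to derive the statement as a specialization of the free-terminal-time Maximum Principle of \cite{bressan2007introduction} (their Theorem 6.5.1), by first recasting the discounted Bolza problem \eqref{Bolza_funkcjonal}--\eqref{bolza_stan} into the standard Mayer form to which that theorem applies, and then translating its conclusions back into the present-value Hamiltonian notation of Definition \ref{definicja_hamiltonian}. Since the result is stated as a simplified, maximization-version restatement, the work is essentially one of reduction and careful bookkeeping rather than a new variational argument.

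First I would eliminate the integral cost by adjoining an auxiliary state $x^0$ with $\dot{x}^0(t)=e^{-rt}f(t,x(t),u(t))$ and $x^0(0)=0$, so that the running payoff becomes the single terminal value $x^0(T^*)$; the discounted terminal term $g(T^*,x(T^*))e^{-rT^*}$ is then folded directly into the terminal payoff of a pure Mayer problem in the augmented state $(x^0,x)$. Because $f$ and $\varphi$ depend explicitly on $t$, I would further adjoin a clock variable $\tau$ with $\dot{\tau}=1$ and $\tau(0)=0$, which renders the augmented dynamics autonomous and converts the explicit time dependence into state dependence. The control constraint $u\in[0,M]$ is carried over unchanged, and the free terminal time is encoded by the target manifold $\{(x^0,x,\tau):\psi(\tau,x)=0\}$, whose defining gradient is nonzero exactly because $\nabla\psi(T^*,x_*(T^*))\neq 0$ is assumed.

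Next I would verify that the regularity hypotheses of the cited theorem transfer to the augmented data: continuity of $f,\varphi$ in all variables and $C^1$ dependence on $(t,x)$ are preserved under the reduction, and boundedness of the optimal control $u_*$ is a standing assumption. Invoking Theorem 6.5.1 of \cite{bressan2007introduction} then yields a nontrivial, absolutely continuous adjoint together with a scalar multiplier for the target manifold. The adjoint component conjugate to $x^0$ is constant (the augmented dynamics are independent of $x^0$) and, after normalization, plays the role of $\lambda_0\ge 0$; the maximality of the augmented Hamiltonian over $u\in[0,M]$ collapses, upon identifying this component with $\lambda_0 e^{-rt}$, to condition \eqref{argmax_u} for the present-value Hamiltonian, while the adjoint equation for the $x$-component becomes \eqref{zmp_zwiazane_1}. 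The transversality conditions arise from orthogonality of the full adjoint to the tangent space of the manifold $\{\psi=0\}$: the $x$-direction furnishes $\gamma(T^*)=\lambda_1\,\partial\psi/\partial x$, and the clock direction, combined with the Mayer terminal data, produces the free-time identity \eqref{max_zero}, namely $\max_u H(T^*,\cdot)=-\lambda_1\,\partial\psi/\partial t$. The final identity for $\tfrac{d}{dt}H$ is the standard Hamiltonian evolution law along an optimal arc, obtained by differentiating $t\mapsto H(t,x_*(t),u_*(t),\gamma(t),\lambda_0)$ and using \eqref{zmp_zwiazane_1} and \eqref{argmax_u} to cancel the state- and control-variation terms, leaving only the explicit time derivatives.

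I expect the main obstacle to be the transversality bookkeeping at the free terminal time: correctly tracking how the clock-adjoint, the manifold multiplier $\lambda_1$, and the discounted terminal payoff combine to yield the precise signs and the $-\lambda_1\,\partial\psi/\partial t$ term in \eqref{max_zero}, and confirming that nontriviality of $(\lambda_0,\lambda_1,\gamma)$ and the sign constraint $\lambda_0\ge 0$ survive the Mayer reduction under the maximization convention. A more self-contained route would bypass the appeal to \cite{bressan2007introduction} and argue directly by needle variations: perturbing $u_*$ on shrinking subintervals to generate the cone of attainable terminal displacements in the augmented state, then separating this cone from the improving direction to produce $(\lambda_0,\lambda_1,\gamma)$; the same transversality subtlety reappears there as the requirement that the separating hyperplane be compatible with the one-parameter freedom in the terminal time.
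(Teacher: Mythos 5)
The paper offers no proof of this theorem: it is quoted from \cite{bressan2007introduction} (their Theorem 6.5.1), merely restated in a simplified maximization form, so your derivation---the standard Mayer reduction via an adjoined running-cost state and clock variable, followed by an appeal to that same theorem and transversality bookkeeping on the target manifold $\{\psi=0\}$---is correct and consistent with the paper's treatment.
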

\subsubsection{Pontryagin's Maximum Principle with fixed terminal time and free terminal state}\label{appendix_free_State}

In the context of optimizing control for phases of extraction that are not the terminal ones in the market game, we introduce the maximum principle for the Bolza problem with fixed time and free terminal state, stated by equations \eqref{Bolza_funkcjonal}-\eqref{bolza_zmianastanu}. This form of the Bolza problem was formulated e.g. in \cite{zabczyk2020mathematical}. 

\begin{theorem}[PMP with fixed terminal time and free terminal state]\label{zmp_zwykla}
    Let $f$, $g$ i $\varphi$ and their partial derivatives with respect to $x$ be continuous functions on their domains. 
    \setlength{\parindent}{20pt}If $u_*$ is the control maximizing the payout $J\big(x(\cdot),u(\cdot)\big)$, and $x_*$ be the trajectory determined by $u^*$, then in Definition \ref{definicja_hamiltonian} occurs $\lambda_0=1$ and there exists an absolutely continuous function $\gamma:[0,T_1] \rightarrow \mathbb{R}$ such that for all $t$, in which exists a left-sided derivative $x_*(t)$ equal to $\varphi\big(t,x_*(t),u_*(t)\big)$ occurs:
    \begin{equation}
    x_*(t)=\frac{\partial H\big(t,x_*(t),u_*(t),
    \gamma(t)\big)}{\partial \gamma} \text{ with the initial condition } x_*(0)=x_0,
    \end{equation}
    \begin{equation}\label{zmp_2_wk}
    \dot{\gamma}(t)=-\frac{\partial H\big(t,x_*(t),u_*(t),
    \gamma(t)\big)}{\partial x} \text{ with the terminal condition } \gamma(T_1)=\frac{\partial g\big(t,x_*(T_1)\big)}{\partial x}e^{-rT_1}, 
    \end{equation}
    \begin{equation}\label{argmax_u_zwykla}
    u_*(t) \in \Argmax_{u \in [0,M]}H\big(t,x_*(t),u,\gamma(t)\big).         
    \end{equation}

\end{theorem}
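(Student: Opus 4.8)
The plan is to establish this version of the Pontryagin Maximum Principle by the classical method of \emph{needle} (spike) \emph{variations}, noting that the statement is the standard free-endpoint, fixed-horizon result and could alternatively be quoted verbatim from \cite{bressan2007introduction} or \cite{zabczyk2020mathematical}. First I would recast the Bolza functional \eqref{Bolza_funkcjonal} in Mayer form by appending an auxiliary state $x^0$ with $\dot x^0(t)=e^{-rt}f(t,x(t),u(t))$ and $x^0(0)=0$, so that maximizing $J$ is equivalent to maximizing the terminal functional $\Phi\big(x^0(T_1),x(T_1)\big):=x^0(T_1)+e^{-rT_1}g(T_1,x(T_1))$ over the augmented dynamics. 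The continuity and $C^1$-in-$x$ hypotheses on $f,\varphi,g$ are exactly what guarantees that the augmented right-hand side is continuous in $(t,u)$ and $C^1$ in the state, while compactness of the control set $[0,M]$ supplies the uniform bounds used below.

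Next I would fix a Lebesgue point $\tau\in(0,T_1)$ of the map $t\mapsto\varphi\big(t,x_*(t),u_*(t)\big)$ and an arbitrary admissible value $v\in[0,M]$, and define the perturbed control $u_\varepsilon$ equal to $v$ on $[\tau-\varepsilon,\tau]$ and to $u_*$ elsewhere. A Gronwall estimate shows that the trajectory $x_\varepsilon$ stays within $O(\varepsilon)$ of $x_*$, that the variation vector $w(t):=\lim_{\varepsilon\to0^+}\big(x_\varepsilon(t)-x_*(t)\big)/\varepsilon$ exists for $t\ge\tau$ with $w(\tau)=\varphi\big(\tau,x_*(\tau),v\big)-\varphi\big(\tau,x_*(\tau),u_*(\tau)\big)$, and that $w$ solves the linearized equation $\dot w(t)=\partial_x\varphi\big(t,x_*(t),u_*(t)\big)\,w(t)$. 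I would then introduce the costate $\gamma$ as the solution of the adjoint equation \eqref{zmp_2_wk}, i.e. $\dot\gamma=-\partial_x H$, with the transversality condition $\gamma(T_1)=e^{-rT_1}\partial_x g$ inherited from $\nabla\Phi$. The point of this choice is that $\tfrac{d}{dt}\langle\gamma(t),w(t)\rangle=0$ along the variational equation, so the first-order change of the objective equals $\langle\gamma(\tau),w(\tau)\rangle$.

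Optimality of $u_*$ forces $J(u_\varepsilon)-J(u_*)\le0$, hence this first-order change is nonpositive, which gives $\langle\gamma(\tau),\varphi(\tau,x_*(\tau),v)\rangle\le\langle\gamma(\tau),\varphi(\tau,x_*(\tau),u_*(\tau))\rangle$; adding the common running-cost term, this is precisely $H(\tau,x_*(\tau),v,\gamma(\tau))\le H(\tau,x_*(\tau),u_*(\tau),\gamma(\tau))$. Since $\tau$ ranges over a full-measure set and $v\in[0,M]$ is arbitrary, the maximum condition \eqref{argmax_u_zwykla} holds for a.e. $t$; the state equation in the statement is just \eqref{bolza_zmianastanu} written in Hamiltonian form $\dot x_*=\partial_\gamma H$. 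Normality ($\lambda_0=1$) follows because the endpoint is free: were $\lambda_0=0$, the transversality condition would give $\gamma(T_1)=0$ and the homogeneous adjoint equation would force $\gamma\equiv0$, contradicting nontriviality of $(\lambda_0,\gamma)$; hence $\lambda_0\neq0$ and we normalize it to $1$. I expect the only genuinely delicate step to be the differentiable dependence of the trajectory on the needle width — establishing existence of $w$ and the linearized equation rigorously for merely measurable $u_*$ and absolutely continuous $x_*$ — which is handled by restricting to Lebesgue points together with a standard Gronwall argument exploiting the compactness of $[0,M]$.
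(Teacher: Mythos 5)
The paper does not prove this theorem at all: it is imported as a known result, ``formulated e.g.\ in \cite{zabczyk2020mathematical}'' (and its free-terminal-time companion is quoted from \cite{bressan2007introduction}), and only its hypotheses are verified later for the cartel's problem. So your needle-variation proof is not an alternative route to the paper's argument --- it is a self-contained derivation where the paper offers only a citation. The architecture you describe (Mayer reformulation via the auxiliary state $x^0$, spike perturbations at Lebesgue points, Gronwall control of $x_\varepsilon-x_*$, the variational equation for $w$, the adjoint with transversality $\gamma(T_1)=e^{-rT_1}\partial_x g$, and normality from the free endpoint) is the standard and correct one for \eqref{Bolza_funkcjonal}--\eqref{bolza_zmianastanu}, and your observation that $\lambda_0=0$ would force $\gamma\equiv 0$ by backward uniqueness is exactly why the theorem can be stated with $\lambda_0=1$.

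There is, however, a bookkeeping slip in the middle step that you should repair before this counts as a proof. With $\gamma$ defined by the \emph{Bolza} adjoint \eqref{zmp_2_wk}, i.e.\ $\dot\gamma=-\partial_x H=-e^{-rt}\partial_x f-\gamma\,\partial_x\varphi$, the pairing $\langle\gamma,w\rangle$ is \emph{not} conserved along $\dot w=\partial_x\varphi\,w$; one has $\tfrac{d}{dt}\langle\gamma,w\rangle=-e^{-rt}\partial_x f\cdot w$. What is conserved is the \emph{augmented} pairing $\gamma^0 w^0+\gamma w$ with $\gamma^0\equiv 1$ and $\dot w^0=e^{-rt}\partial_x f\cdot w$, $w^0(\tau)=e^{-r\tau}\bigl(f(\tau,x_*(\tau),v)-f(\tau,x_*(\tau),u_*(\tau))\bigr)$. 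Consequently the first-order change of the objective is not $\langle\gamma(\tau),w(\tau)\rangle$ but $w^0(\tau)+\gamma(\tau)w(\tau)=H\bigl(\tau,x_*(\tau),v,\gamma(\tau)\bigr)-H\bigl(\tau,x_*(\tau),u_*(\tau),\gamma(\tau)\bigr)$, which yields \eqref{argmax_u_zwykla} directly. Your intermediate inequality $\langle\gamma(\tau),\varphi(\tau,x_*(\tau),v)\rangle\le\langle\gamma(\tau),\varphi(\tau,x_*(\tau),u_*(\tau))\rangle$ followed by ``adding the common running-cost term'' is not valid, because the running-cost contributions $e^{-r\tau}f(\tau,x_*(\tau),v)$ and $e^{-r\tau}f(\tau,x_*(\tau),u_*(\tau))$ are not common --- $f$ depends on the control. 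The fix is entirely within the Mayer framework you already set up in your first paragraph: carry the $w^0$ component through the conservation identity rather than dropping it. With that correction the proof is complete and standard.
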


\subsubsection{Theorem on the existence of optimal control}

At the end of this methodological chapter, we introduce the theorem about the existence of optimal control, which is a solution to the Bolza problem. We extract it from the work of \cite{bressan2007introduction}. However, we present it in versions applicable to two simpler maximization problems: one described by equations \eqref{Bolza_funkcjonal}-\eqref{bolza_stan} and the other described by equations \eqref{Bolza_funkcjonal}-\eqref{bolza_zmianastanu}. Denote one of the assumptions of the discussed theorem as $(\mathbb{\hat{H}})$:
\\

  $(\mathbb{\hat{H}})$ Let $[0,M]$ be a set of control functions such that $[0,M] \in [0,M]$ for some constant $M>0$. Furthermore, let $\varphi:[0,\infty)\times\mathbb{R}\times[0,M] \rightarrow \mathbb{R}$ be a continuous function with respect to all its variables and of class $C^1$ with respect to trajectory $x$ such that it satisfies the constraint
  \begin{equation}
      |\varphi(t,x,u)|\leq C(1+|x|)
  \end{equation}
  for some constant $C>0$.

  Besides, we need to define family of target sets $\mathbb{S}$.

\begin{definition}
    Let $S(t)$ be a family of sets such that for the problem in form \eqref{Bolza_funkcjonal}-\eqref{bolza_stan} is given by
    \begin{equation}\label{celu_swobodny}
        \mathbb{S}(t)=\{x: \psi(t,x)=0\},
    \end{equation}
    and for the problem in form \eqref{Bolza_funkcjonal}-\eqref{bolza_zmianastanu} with fixed terminal time $T_1$
    \[\mathbb{S}(t)=
    \begin{cases}
    \mathbb{R} \quad \quad t=T_1& \\
    \emptyset \quad \quad \text{otherwise}.            &
\end{cases}
\]  
\end{definition}

\begin{theorem}[Existence of optimal control in Bolza problem]\label{tw_o_istnieniu}
Assume that $\varphi$, $\mathbb{U}$ satisfy the condition $(\mathbb{\hat{H}})$. Let $\mathbb{S}(t)$ be a family of closed target sets in a problem of the form \eqref{Bolza_funkcjonal}-\eqref{bolza_stan} or of the form \eqref{Bolza_funkcjonal}-\eqref{bolza_zmianastanu}. Assume that there exists a control $u$ and the corresponding trajectory $x$ satisfying \eqref{bolza_zmianastanu} such that $x(\overline{T}) \in \mathbb{S}(\overline{T})$ for some $\overline{T}$. Furthermore, let $f$ be a continuous function. If $g$ is a continuous function and the sets of the form
\begin{equation}\label{zbiory_wypukle}
    F^-(t,x)=\{(y_0,y) \in \mathbb{R}^2: y_0 \leq f(t,x,u)e^{-rt}, \quad y=\varphi(t,x,u) \quad \text{for } u \in \mathbb{U}\}
\end{equation}
are convex, then the problem \eqref{Bolza_funkcjonal}-\eqref{bolza_stan} with free terminal time or the problem \eqref{Bolza_funkcjonal}-\eqref{bolza_zmianastanu} with fixed terminal time has an optimal solution (namely, there exists an optimal control).
     
\end{theorem}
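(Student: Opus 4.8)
The plan is to argue by the direct method of the calculus of variations (Filippov--Cesari existence theory), so that the stated hypotheses become exactly those needed to invoke the general existence theorem of \cite{bressan2007introduction}. First I would fix whichever of the two problem forms is under consideration and set $V := \sup J$, the supremum of the payoff over admissible pairs $(u,x)$. By the feasibility assumption this supremum is taken over a nonempty set, and it is finite because the growth bound in $(\mathbb{\hat{H}})$, together with boundedness of the controls in $[0,M]$ and continuity of $f$, controls the running payoff on compact time intervals. I would then take a maximizing sequence $(u_n,x_n)$ with $J(x_n,u_n)\to V$, and, in the free-terminal-time case, associated terminal times $T_n$ with $x_n(T_n)\in\mathbb{S}(T_n)$.

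Second, I would extract compactness. The bound $|\varphi(t,x,u)|\le C(1+|x|)$ from $(\mathbb{\hat{H}})$, inserted into Gronwall's inequality with the common initial condition $x_n(0)=x_0$, yields a uniform bound on $\|x_n\|$ on each compact interval; the same bound shows $\{\dot x_n\}$ is uniformly bounded in $L^\infty$ on compacts, so the $x_n$ are uniformly Lipschitz and equicontinuous. By Arzel\`a--Ascoli a subsequence $x_n\to x_*$ converges uniformly on compacts, with $\dot x_n\rightharpoonup \dot x_*$ weakly in $L^1_{\mathrm{loc}}$, and $x_*$ is absolutely continuous with $x_*(0)=x_0$. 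In the free-terminal-time case I would additionally show the $T_n$ remain in a bounded interval (so a subsequence $T_n\to T_*$), using target feasibility together with the growth bound to exclude escape to infinity.

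The crux is the closure step, and this is where convexity of the sets $F^-(t,x)$ enters. For a.e.\ $s$ the augmented pair $\big(\,e^{-rs}f(s,x_n(s),u_n(s)),\;\dot x_n(s)\big)$ lies in $F^-(s,x_n(s))$. Converting the weak convergence of the velocities into strong convergence of convex combinations via Mazur's lemma, and using the convexity of $F^-$ together with the upper closedness in the payoff coordinate encoded in the inequality $y_0\le f(t,x,u)e^{-rt}$, I would invoke Cesari's lower-closure theorem to conclude that for a.e.\ $t$ the limiting velocity $\dot x_*(t)$ and the limiting running-payoff density lie in $F^-(t,x_*(t))$. Equivalently, Filippov's measurable-selection lemma produces a measurable control $u_*(t)\in[0,M]$ with $\dot x_*(t)=\varphi(t,x_*(t),u_*(t))$ a.e.\ and with running payoff dominating the limit of those of $u_n$; continuity of $g$ handles the terminal term, while closedness of $\mathbb{S}(T_*)$ together with $x_n(T_n)\to x_*(T_*)$ shows the target is reached. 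Hence $(u_*,x_*)$ is admissible with $J(x_*,u_*)\ge V$, so it is optimal.

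I expect the lower-closure step to be the genuine obstacle: one must guarantee that no payoff is lost in the limit and that the limiting velocity is still realized by an \emph{admissible} control, which is precisely what the convexity hypothesis on $F^-$ delivers---without it the attainable velocity set could strictly enlarge under weak limits. The downward inequality $y_0\le f(t,x,u)e^{-rt}$ is what renders the theory compatible with maximization, since it permits the limiting payoff to be dominated by an attainable one. A secondary technical point, specific to the free-terminal-time version, is the convergence of the terminal times and the continuity of $(T,x(\cdot))\mapsto x(T)$ along the sequence, which I would control using the uniform Lipschitz bound on the $x_n$.
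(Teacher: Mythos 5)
The paper does not prove this theorem: it is imported (in simplified form) from \cite{bressan2007introduction} as a ready-made tool, so there is no internal proof to compare yours against. Your sketch reconstructs the standard Filippov--Cesari direct-method argument that underlies the cited result --- maximizing sequence, Gronwall plus Arzel\`a--Ascoli compactness, Mazur's lemma and lower closure using convexity of the augmented velocity sets $F^-(t,x)$, and a measurable selection to recover an admissible control --- and in outline this is the right skeleton. The convexity hypothesis is used exactly where you place it, to prevent the attainable set of pairs (running payoff density, velocity) from enlarging under weak limits; closedness of $F^-(t,x)$, which lower closure also needs, comes for free here from compactness of $\mathbb{U}=[0,M]$ and continuity of $f$ and $\varphi$.

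There is, however, one step that is asserted rather than proved and does not follow from the stated hypotheses: in the free-terminal-time case you claim the terminal times $T_n$ of a maximizing sequence stay bounded ``using target feasibility together with the growth bound.'' Feasibility supplies a single admissible pair reaching $\mathbb{S}(\overline{T})$ at some finite $\overline{T}$; it does not prevent a maximizing sequence from having $T_n\to\infty$, and the linear growth bound on $\varphi$ is irrelevant to this. Free-terminal-time problems can genuinely fail to admit optimal solutions precisely because the supremum is approached only as $T\to\infty$. One must either add a hypothesis (a uniform bound on admissible terminal times, or a coercivity/discounting condition making late arrivals suboptimal) or exploit problem-specific structure, as the paper does in its application: there the control in the terminal phase is bounded below by the positive constant $\frac{\alpha-b}{\beta}$, so the state decreases at a uniformly positive rate and the depletion time defined by \eqref{bolza_stan} is uniformly bounded. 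As written, your argument has a gap at this point; the fixed-terminal-time branch is unaffected.
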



\subsection{Technical proofs}\label{appendix_tech_proofs}

\subsubsection{Proof of the existence of the optimal control for the cartel}\label{podrozdzial_weryf_tw_o_istn}
To prove the existence of the optimal control, it is necessary to positively verify the assumptions of Theorem \ref{tw_o_istnieniu}. These assumptions need to be checked for the two problems considered in the paper: the maximization problem with free terminal time and fixed terminal state, described by equations \eqref{Bolza_funkcjonal}-\eqref{bolza_stan}, and the maximization problem with fixed terminal time and free terminal state, described by equations \eqref{Bolza_funkcjonal}-\eqref{bolza_zmianastanu}

\begin{proposition}
The assumptions of Theorem \ref{tw_o_istnieniu} are satisfied for the problem with free terminal time and fixed terminal state described by equations \eqref{Bolza_funkcjonal}-\eqref{bolza_stan} without switching between phases.
\end{proposition}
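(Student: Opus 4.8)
The plan is to verify, one by one, the three structural hypotheses of Theorem \ref{tw_o_istnieniu} for the cartel's free-terminal-time, fixed-terminal-state problem \eqref{Bolza_funkcjonal}--\eqref{bolza_stan}, specialized to the model data. Here the dynamics are $\varphi(t,S^c,q^c)=-q^c$ (from \eqref{zmiana_st_c}), the running payoff is $f(t,S^c,q^c)=(\alpha-\beta(q^c+Q^f)-k^c)q^c$ with $Q^f$ the constant aggregate reaction in the phase under consideration, the terminal constraint is $\psi(t,S^c)=S^c$ (depletion $S^c(T^c)=0$), and the control set is $[0,M^c]$. First I would check the growth condition $(\mathbb{\hat H})$: since $\varphi(t,S^c,q^c)=-q^c$ is linear (indeed independent of $S^c$) and $|{-q^c}|\le M^c\le C(1+|S^c|)$ trivially for $C=M^c$, the bound holds, and continuity together with $C^1$-dependence on $S^c$ is immediate because $\varphi$ does not depend on $S^c$ at all.

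Second I would verify the remaining regularity and admissibility hypotheses. Continuity of $f$ in $(t,S^c,q^c)$ is clear since $f$ is polynomial in $q^c$ with the constant $Q^f$ fixed throughout the phase (by Remark \ref{rem_costate}a, $\textup{JSRF}$ is constant within a phase, so no discontinuity intervenes here---this is exactly why the statement restricts to the case \emph{without} switching). The terminal-constraint set $\mathbb S(t)=\{S^c:\psi(t,S^c)=0\}=\{0\}$ is closed. For admissibility I would exhibit one control steering the state into the target: taking any constant $q^c\in(0,M^c]$ drives $S^c$ from $S^c_0$ down to $0$ at the finite time $\overline T=S^c_0/q^c$, so $S^c(\overline T)\in\mathbb S(\overline T)$. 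The terminal payoff $g$ is continuous (in the relevant phases it is either absent or a smooth expression such as $\pi^c_L$ of \eqref{wypl_konc}).

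The step I expect to be the main obstacle---and the only one requiring genuine argument rather than inspection---is the convexity of the sets $F^-(t,S^c)$ in \eqref{zbiory_wypukle}. I would parametrize the image by eliminating the control: as $q^c$ ranges over $[0,M^c]$, the pair $(y_0,y)$ traces $y=-q^c$ and the boundary $y_0=f(t,S^c,q^c)e^{-rt}$, so $q^c=-y$ and the upper boundary is $y_0=e^{-rt}\bigl(\alpha-\beta(-y+Q^f)-k^c\bigr)(-y)$, a quadratic in $y$ with leading coefficient $-\beta e^{-rt}<0$. Hence $y_0$ is a \emph{concave} function of $y$ over the interval $y\in[-M^c,0]$, and $F^-(t,S^c)=\{(y_0,y): y\in[-M^c,0],\ y_0\le(\text{concave function of }y)\}$ is the subgraph of a concave function over a convex set, which is convex. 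This concavity---stemming from the strict concavity of the cartel's instantaneous profit in its own extraction, noted after \eqref{eq_kartel_calka}---is the crux, and I would state it carefully because it is exactly the property that can fail once a quadratic cost term $\tfrac{\xi^c}{2}(q^c)^2$ with the wrong sign is added, as flagged in the extensions. Having assembled all hypotheses, I would conclude by invoking Theorem \ref{tw_o_istnieniu} to assert the existence of an optimal cartel control for this phase.
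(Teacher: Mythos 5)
Your proposal is correct and follows essentially the same route as the paper's proof: verify attainability of the target set $\{S^c=0\}$ via a control bounded away from zero, check the linear-growth and regularity conditions on $\varphi^c=-q^c$ and $f^c$ (which are trivial since neither depends on $S^c$), and establish convexity of $F^-(t,x)$ from the concavity of the cartel's instantaneous profit in $q^c$ — the paper does this by directly verifying that convex combinations stay in the set, while you phrase the same fact as the hypograph of a concave function over $y\in[-M^c,0]$, which is an equivalent and slightly cleaner formulation. No gaps.
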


\begin{proof}
 Consider the optimization problem for the cartel with free terminal time, described by equations \eqref{Bolza_funkcjonal}-\eqref{bolza_stan}. This problem corresponds to the last phase in each of strategy classes deter, share or wait which has been described in Subsection \ref{podrozdzial_strategie}. 

 Consider the target set given by formula \eqref{celu_swobodny}. Certainly, the target set $\mathbb{S}$ is attainable for the trajectory $S^c$ given by \eqref{zmiana_st_c}. The price-setting cartel can set the price at any level higher than $k^c$ and in that case, it will generate profits. In this phase, there is a lower bound set on cartel's control parameter, as stated in the set of inequalities given by \eqref{scen3_przedzial_par_ster}: $q^c>\frac{\alpha-\beta M^f-b}{\beta}$. This means that the resource state decreases at a rate greater than zero at any moment during the game. This proves the existence of a moment $T^c$ such that $S^c(T^c)=0$, implying $S^c(T^c) \in \mathbb{S}(T^c)$.

 When it comes to the continuity and differentiability conditions of the functions $f^c$ and $\varphi^c$, it is sufficient to show that the functions
 \begin{itemize}
    \item $f^c(t,S^c,q^c)=(\alpha-\beta(Q^f+q^c)-k^c)q^c$,
    \item $\varphi^c\big(t,S^c\big)=-q^c(t)$
\end{itemize}
are continuous with respect to all variables and $C^1$ with respect to $(t,S^c)$. Note that none of the functions $f^c, \varphi^c$ depend explicitly on $S^c$. Each of them depends quadratically and linearly on $q^c$, so the statement is obvious. 

Now, we show that $\varphi^c$ satisfies $(\mathbb{\hat{H}})$. We need to find a constant $C>0$ such that
\begin{equation*}
    |-q^c(t)|\leq C\Big(1+\big|-\int_t^{T^c}q^c(t)\big|\Big).
\end{equation*}
As the constraint $p(t)=\alpha-\beta q^c \geq 0$ implies $|q^c|\leq\frac{\alpha}{\beta}$, it is sufficient to take $C=\frac{\alpha}{\beta}$. 

Finally, it needs to be shown that the sets $F^-(t,x)$ defined as in equation \eqref{zbiory_wypukle} are convex. Consider points $(y_0,y), (z_0,z) \in F^-(t,x)$. We aim to prove that the convex combination $(ay_0+(1-a)z_0, ay+(1-a)z) \in F^-(t,x)$ for $a \in [0,1]$. The sets $F^-(t,x)$ are of form 
\begin{equation}\label{zb_wypukle}
    F^-(t,x)=\{(y_0,y) \in \mathbb{R}^2: y_0 \leq \big(\alpha - \beta(Q^f+q^c)-k^c\big)q^ce^{-rt}, y=-q^c \text{ dla } q^c \in\mathbb{U} \}.
\end{equation}
If $(y_0,y)$ and $(z_0,z)$ belong to $F^-(t,x)$, then there exist $q^c_y, q^c_z \in \mathbb{U}$ such that 
 \begin{equation}\label{wklesle_1}
 \begin{cases}
    y_0 \leq \big(\alpha - \beta(Q^f+q^c_y)-k^c\big)q^c_ye^{-rt}& \\
    y=-q^c_y             &
\end{cases}
 \end{equation}
 \begin{equation}\label{wklesle_2}
 \begin{cases}
    z_0 \leq \big(\alpha - \beta(Q^f+q^c_z)-k^c\big)q^c_ze^{-rt}& \\
    z=-q^c_z             &
\end{cases}
\end{equation}
Multiplying the first system of equations and inequalities by $a$ and adding it to the second system multiplied by $1-a$ does not take us out of the set $F^-(t,x)$. This follows from the linearity of the second line in the systems given by \eqref{wklesle_1} and \eqref{wklesle_2} along with the concavity with respect to $q^c_y$ and $q^c_z$ of the functions on the right-hand side of the inequalities in the first line of the systems given by \eqref{wklesle_1} and \eqref{wklesle_2}. Therefore, $(ay_0+(1-a)z_0, ay+(1-a)z) \in F^-(t,x)$. Hence, we conclude that in the analyzed optimization problem an optimal control exists.
\end{proof}

The proof for the problem with a fixed terminal time \eqref{Bolza_funkcjonal}-\eqref{bolza_zmianastanu} is significantly analogous. This problem corresponds to the initial phase in ''share'' type of strategy, which is phase $S$.

\begin{proposition}\label{stw_zal_tw_o_istnieniu}
    The assumptions of Theorem \ref{tw_o_istnieniu} are satisfied for the problem with a fixed terminal time $T_1$ and a free terminal state, described by \eqref{Bolza_funkcjonal}-\eqref{bolza_zmianastanu}, assuming continuity of the final payoff and without switching between phases.
\end{proposition}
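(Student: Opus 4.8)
The plan is to verify, one hypothesis at a time, the assumptions of Theorem \ref{tw_o_istnieniu} for the data of phase $S$, reusing as much as possible the argument of the preceding proposition, since the two optimal control problems differ only in the type of terminal condition. In phase $S$ the fringe extracts at its cap, so $Q^f\equiv M^f$ is a fixed constant and the cartel's problem becomes a genuine single-player optimal control problem with running payoff $f^c(t,S^c,q^c)=(\alpha-\beta(M^f+q^c)-k^c)q^c$, dynamics $\varphi^c(t,S^c,q^c)=-q^c$, bounded control set $q^c\in[0,M^c]$, fixed terminal time $T_1=T^f$, and terminal payoff $g=\pi^c_L$ given by \eqref{pi^c}. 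The relevant family of target sets is the degenerate one, $\mathbb{S}(t)=\mathbb{R}$ for $t=T_1$ and $\mathbb{S}(t)=\emptyset$ otherwise.

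First I would check condition $(\mathbb{\hat{H}})$. The control set $[0,M^c]$ is bounded; $\varphi^c$ is continuous in all variables and, being independent of $S^c$, is trivially of class $C^1$ in $(t,S^c)$; and the linear growth bound $|\varphi^c(t,S^c,q^c)|=|q^c|\le M^c\le M^c(1+|S^c|)$ holds with $C=M^c$ (equivalently $C=\alpha/\beta$, using $|q^c|\le\alpha/\beta$). These are verbatim the estimates from the preceding proposition. Continuity of $f^c$ is immediate, as it is a quadratic polynomial in $q^c$ with coefficients continuous in the remaining variables.

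The two genuinely different points are the target set and the terminal payoff. Reachability of the target is now \emph{automatic}: since $\mathbb{S}(T_1)=\mathbb{R}$, every admissible trajectory satisfies $S^c(T_1)\in\mathbb{S}(T_1)$, so, unlike in the free-terminal-time case, there is no need to invoke a lower bound on $q^c$ to force depletion; closedness of $\mathbb{S}(t)$ is clear. Continuity of $g=\pi^c_L$ is exactly the hypothesis ``continuity of the final payoff'' in the statement, and it is justified by the explicit closed form \eqref{pi^c}, which exhibits $\pi^c_L$ as a smooth function of $(T^f,S^c_{T^f})$ through $T^c=T^f+S^c_{T^f}\beta/(\alpha-b)$.

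The one substantive step, and the place where the argument must actually be carried out rather than quoted, is the convexity of the sets $F^-(t,x)$ in \eqref{zbiory_wypukle}. Here $\varphi^c$ is affine (indeed linear) in $q^c$, so $y=-q^c$ is an affine bijection of the control interval, while $f^c$ is strictly concave in $q^c$ (its leading coefficient is $-\beta<0$). Consequently $F^-(t,x)$ is precisely the subgraph, over an interval, of a concave function of $y$, hence convex; concretely, taking $(y_0,y),(z_0,z)\in F^-(t,x)$ with associated controls $q^c_y,q^c_z$ and forming a convex combination reduces, exactly as in \eqref{wklesle_1}--\eqref{wklesle_2}, to the affinity of the state equation together with the concavity of $f^c$ in the control. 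With all hypotheses verified, Theorem \ref{tw_o_istnieniu} yields an optimal control for phase $S$. I expect this convexity verification to be the only nontrivial obstacle; everything else is either inherited from the previous proposition or trivialized by the degeneracy of the fixed-time target set.
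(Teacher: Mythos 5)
Your proposal is correct and follows essentially the same route as the paper: verify $(\mathbb{\hat{H}})$ with the same linear-growth constant, note that the fixed-time target set makes reachability trivial, treat continuity of the terminal payoff as the assumed hypothesis (the paper defers its verification to Corollary \ref{wniosek_o_spelnieniu_zalozen_w_PMP}), and establish convexity of $F^-(t,x)$ from affinity of the dynamics and concavity of $f^c$ in $q^c$, which is exactly the computation the paper carries out in the free-terminal-time case and then declares analogous here. No gaps.
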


\begin{proof}
Consider the cartel's optimization problem with a fixed terminal time $T_1$, described by \eqref{Bolza_funkcjonal}-\eqref{bolza_zmianastanu}. Obviously, the target set $\mathbb{S}$ given by the formula
\[\mathbb{S}(t)=
\begin{cases}
    \mathbb{R} \quad \quad t=T_1& \\
    \emptyset \quad \quad \text{otherwise}.            &
\end{cases}
\]  
is attainable for the trajectory $S^c$, since the codomain of  this function is the real line $\mathbb{R}$. 

Now, we show that $\varphi^c$ satisfies $(\mathbb{\hat{H}})$. We need to find $C>0$ such that 
\begin{equation*}
    |-q^c(t)|\leq C\Big(1+\big|-\int_t^{T_1}q^c(t)\big|\Big).
\end{equation*}
Since the constraint $p(t)=\alpha-\beta q^c\geq 0$ implies $|q^c(t)|\leq \frac{\alpha}{\beta}$, it is sufficient to take $C=\frac{\alpha}{\beta}$.

When it comes to the continuity and differentiability conditions of functions $f^c$ and $\varphi^c$, is sufficient to show that the functions 
\begin{itemize}
        \item $f^c(t,S^c,q^c)=(\alpha-\beta(Q^f+q^c)-k^c)q^c$,
        \item $\varphi^c\big(t,S^c(t)\big)=-q^c(t)$
    \end{itemize}
are continuous with respect to all variables and are of $C^1$ class with regards to $S^c$. Note that neither of the functions $f^c, \varphi^c$ depends explicitly on $S^c$, while $f^c$ and $\varphi^c$ depend, respectively, quadratically and linearly on $q^c$.

The terminal payoff function $g^c$ is given by the formula
\begin{equation}\label{wyplata_konc_strategia2}
        g^c\big(T_1,S^c\big)= \int_{T_1}^{T^c}e^{-rt}\Big(\alpha-\beta\big(\overline{q}^c(t)+Q^f(t)\big)-k^c\Big)\overline{q}^c(t)dt,
    \end{equation}
where $\overline{q}^c$ is the optimal solution in the phase following $T_1$. Regarding the continuity of the function $g^c$ with respect to all variables and its belonging to the class $C^1$ with respect to $S^c$, it should be noted that since $T^c$ depends on $S^c(T_1)$, then $g^c$ does as well. Therefore, the function $g^c$ depends on the optimal control in the phase following $T_1$, assumption is necessary for $g^c$ to be continuous with respect to all its variables. However, in the main part of this work (in Subsection \ref{s_conclusion_arising}, we have demonstrated that $\overline{q}^c(t)$ is a continuous function within the interval $[T_1,T^c]$.

For a fixed end time $T_1$, the proof of the convexity of the sets  $F^-(t,x)$ is analogous.
\end{proof}

\subsubsection{Verification of the Maximum Principle assumptions in the context of the considered model}

In this subsection, we verify the assumptions of Theorems \ref{ZMP_zwiazane} and \ref{zmp_zwykla}. Since in Subsection \ref{podrozdzial_weryf_tw_o_istn} we checked the continuity and regularity of the relevant functions except for the final payment function given by equation \eqref{wyplata_konc_strategia2}, we need to verify the remaining assumptions in both cited theorems. For the application of Theorem \ref{ZMP_zwiazane} in the further part of the work, it is necessary to check that the gradient $\nabla\psi\big(T^*,S^c(T^*)\big)$ is nonzero. This is true because $\nabla\psi\left(T^*,S^c(T^*)\right)=1$. We do not conduct a similar reasoning for the fringe's problem since we optimize their problem statically without using the Maximum Principle.

We show that $\lambda_0^c \neq 0$ in Theorem \ref{ZMP_zwiazane} and the fact that $\gamma^c$ is constant.

\begin{proposition}\label{wniosek_stalosc_gammy}
    If $q^c$ is the optimal control for the cartel in the considered model, then in Theorem \ref{ZMP_zwiazane} corresponding to the final phase (thus, phase $C$ or $L$), we can assume $\lambda_0=1$. Additionally, in the same theorem, the function $\gamma^c$ is constant.
\end{proposition}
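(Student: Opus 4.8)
The plan is to treat the two assertions separately: the constancy of $\gamma^c$ follows directly from the structure of the Hamiltonian, while the normalization $\lambda_0=1$ requires excluding the abnormal case, which I would do by contradiction using the transversality condition \eqref{max_zero}.

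First I would record that in a final phase the fringe no longer extracts, so $Q^f=0$ and $\textup{JSRF}\equiv 0$; by Remark \ref{rem_costate} the cartel Hamiltonian then reduces to $H^c(t,S^c,q^c,\gamma^c,\lambda_0)=\lambda_0 e^{-rt}(\alpha-\beta q^c-k^c)q^c-\gamma^c q^c$, which carries no explicit dependence on $S^c$. The adjoint equation \eqref{zmp_zwiazane_1} therefore gives $\dot{\gamma}^c(t)=-\partial H^c/\partial S^c=0$, so $\gamma^c$ is constant throughout the phase; this disposes of the second claim, and notably it holds irrespective of the value of $\lambda_0$.

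For the normalization I would argue by contradiction. The terminal constraint is depletion of the cartel's stock, so $\psi(t,S^c)=S^c$ and $\partial\psi/\partial t\equiv 0$; condition \eqref{max_zero} thus reads $\max_{q^c}H^c(T^c,S^c_*(T^c),q^c,\gamma^c,\lambda_0)=0$. Assume $\lambda_0=0$. Then $H^c=-\gamma^c q^c$ is affine in $q^c$, and by nontriviality of the multiplier pair together with the constancy already established, $\gamma^c\neq 0$. In each final phase the admissible control ranges over an interval $[\ell,\alpha/\beta]$ with strictly positive left endpoint $\ell$ --- namely $\ell=\frac{\alpha-b}{\beta}>0$ in phase $L$ by Proposition \ref{stw_o_ograniczeniu_wszedzie_brzegowa}, and $\ell=\frac{\alpha-k^f}{\beta}>0$ in phase $C$ by Proposition \ref{prop_k_f_boundary}. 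Maximizing the affine map $-\gamma^c q^c$ over this interval attains $-\gamma^c\ell<0$ at the left endpoint when $\gamma^c>0$, and $-\gamma^c\frac{\alpha}{\beta}>0$ at the right endpoint when $\gamma^c<0$; in neither case does the maximum vanish, contradicting \eqref{max_zero}. Hence $\lambda_0\neq 0$, and after rescaling the multipliers we may take $\lambda_0=1$.

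The one point demanding care --- and the step I expect to be the main obstacle --- is the correct reading of the transversality data: one must verify that $\psi$ indeed encodes depletion so that its partial derivative in $t$ vanishes, thereby forcing the right-hand side of \eqref{max_zero} to be exactly zero, and that the strictly positive lower bound on $q^c$ in each sole-producer phase is genuinely available (as secured by Propositions \ref{stw_o_ograniczeniu_wszedzie_brzegowa} and \ref{prop_k_f_boundary}). Once these are in place, the contradiction is immediate and the proposition follows.
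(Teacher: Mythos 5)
Your proof is correct, and the first half (constancy of $\gamma^c$ from $\partial H^c/\partial S^c=0$ via \eqref{zmp_zwiazane_1}) is exactly the paper's argument. For the normalization $\lambda_0=1$ you take a genuinely different route. The paper also argues by contradiction from $\lambda_0=0$ and splits on the sign of $\gamma^c$, but it never invokes the transversality condition: for $\gamma^c>0$ it says the abnormal Hamiltonian would be maximized at $q^c\equiv 0$, which contradicts optimality because zero extraction yields zero profit; for $\gamma^c<0$ it says no finite maximizer exists, contradicting the previously established existence and boundedness of the optimal control. You instead evaluate $\max_{q^c}H^c$ at $T^c$ and use \eqref{max_zero} with $\psi(t,S^c)=S^c$ (so the right-hand side is zero) to show the affine function $-\gamma^c q^c$ cannot have maximum zero over the admissible interval. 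Your version is arguably tighter in the $\gamma^c<0$ case, where the paper's ``no finite optimal control'' claim sits awkwardly with a bounded control set, whereas your endpoint evaluation gives a clean nonzero value. The one thing to watch is that your $\gamma^c>0$ branch genuinely requires the strictly positive lower bound $\ell$ on the control in the sole-producer phase: over the theorem's nominal set $[0,M]$ the maximum would be $0$ at $q^c=0$ and \eqref{max_zero} would be satisfied, so you would have to fall back on the paper's economic argument there. That lower bound does hold, but it comes from the phase definition and the constraint $b-\alpha+\beta(q^c+Q^f)\geq 0$ in \eqref{eq_kartel_calka} (together with Proposition \ref{prop_JBR}), not from Propositions \ref{stw_o_ograniczeniu_wszedzie_brzegowa} and \ref{prop_k_f_boundary}, which characterize the \emph{optimal} control and whose proofs themselves rely on the constancy of $\gamma^c$; citing them here risks a circularity that you should avoid by pointing at the constraint set directly.
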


\begin{proof}

We start with proving the constancy of the function $\gamma^c$. Based on equation \eqref{zmp_zwiazane_1} in Theorem \ref{ZMP_zwiazane}, we obtain that $\dot{\gamma}^c=-\frac{\partial H^c}{\partial S^c}=0$, which means that $\gamma^c(t)=\gamma^c(T^c)$ is constant.

Now we prove that $\lambda_0^c \neq 0$. Assume the opposite, that is $\lambda_0^c=0$. If: \\
1. $\gamma^c=0$, then we obtain a contradiction, because according to the assumptions of Theorem \ref{ZMP_zwiazane}, the function $\gamma^c$ was supposed to be non-trivial. \\
2. $\gamma^c>0$, then the optimal control based on maximizing the Hamiltonian function $H^c$ would be $q^c(t) \equiv 0$ -- a contradiction, as zero control leads to zero net profits at every moment of the game, which certainly isn't an optimal strategy. \\
3. $\gamma^c<0$, then based on maximizing the cartel's Hamiltonian function, there is no finite optimal control, although earlier we showed its existence and that it is bounded -- a contradiction.

Therefore, $\lambda_0^c \neq 0$ and without loss of generality, assume $\lambda^c_0=1$. This is justified because it is sufficient to scale the maximization problem.
\end{proof}

\begin{proposition}
    For the initial phase (i.e., phase $S$), the function $\gamma^c$ in Theorem \ref{zmp_zwykla} is constant.
\end{proposition}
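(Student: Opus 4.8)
The plan is to read off constancy directly from the costate (adjoint) equation \eqref{zmp_2_wk} of Theorem \ref{zmp_zwykla}, which asserts $\dot\gamma^c(t) = -\,\partial H^c/\partial S^c$ for almost every $t$. Thus it suffices to verify that, throughout phase $S$, the Hamiltonian $H^c$ carries no dependence on the cartel's state $S^c$; the conclusion $\dot\gamma^c \equiv 0$, and hence that $\gamma^c$ is constant on the interval corresponding to this phase, is then immediate.

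First I would observe that in the simplified Hamiltonian \eqref{eq_hamiltonian_cartel} neither the instantaneous profit $f^c(t,S^c,q^c)=(\alpha-\beta(q^c+\textup{JSRF}(q^c,S^f))-k^c)q^c$ nor the state dynamics $\varphi^c=-q^c$ contain $S^c$ explicitly. The only channel through which a state variable could enter $H^c$ is the joint reaction term $\textup{JSRF}(q^c,S^f)$, and this depends on $S^f$, not on $S^c$. Second, I would invoke Proposition \ref{prop_JBR}(c): by the very definition of phase $S$ we are in the branch with $q^c(t)\le \frac{\alpha-\beta M^f-k^f}{\beta}$ and $S^f(t)>0$, so $\textup{JSRF}(q^c(t),S^f(t))=M^f$ is the constant $M^f$ for a.e.\ $t$ in the phase. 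Consequently $H^c$ collapses to a function of $(t,q^c,\gamma^c)$ only.

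Putting these together, $\partial H^c/\partial S^c=0$ almost everywhere on the phase, so \eqref{zmp_2_wk} yields $\dot\gamma^c(t)=0$ and $\gamma^c$ is constant there. This is the exact analogue, for the fixed--terminal--time Theorem \ref{zmp_zwykla}, of the constancy argument already carried out for the final phase in Proposition \ref{wniosek_stalosc_gammy} using Theorem \ref{ZMP_zwiazane}, and it is also the content of Remark \ref{rem_costate}(c) specialized to phase $S$. There is no substantive obstacle: the only point requiring care is the justification that $\textup{JSRF}$ is locally constant (equal to $M^f$) across the whole phase, which is precisely what the definition of phase $S$ together with Proposition \ref{prop_JBR} guarantees, so that the partial derivative with respect to $S^c$ is genuinely zero rather than merely being evaluated at a discontinuity of $\textup{JSRF}$.
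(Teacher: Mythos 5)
Your proof is correct and takes essentially the same route as the paper, which simply reads off $\dot\gamma^c=-\partial H^c/\partial S^c=0$ from the adjoint equation \eqref{zmp_2_wk}, relying on the observation (already recorded in Remark \ref{rem_costate}) that $H^c$ is independent of $S^c$. Your additional care in checking that $\textup{JSRF}$ is constant equal to $M^f$ throughout phase $S$ is a sound elaboration of the same argument rather than a different approach.
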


\begin{proof}
    Based on equation \eqref{zmp_2_wk} in Theorem \ref{zmp_zwykla}, we obtain $\dot{\gamma}^c=-\frac{\partial H^c}{\partial S^c}=0$, thus $\gamma^c(t)=\gamma^c(T_1)$ is constant.
\end{proof}

\section{Proofs}\label{s_proofs_appendix}

\subsection{Proof of Proposition \ref{stw_o_brzegowym_ster_opt}}\label{ss_proofs_stw_o_brzegowym_ster_opt}
First, we determine this interval. In this phase, only the cartel extracts its resource, so it sets the price in the range $p(t)\in [0,b]$. We determine the lower bound of the interval from the constraint $\alpha-\beta q^c\leq b$, which leads to $q^c\geq\frac{\alpha-b}{\beta}$. The upper bound of the interval is determined analogously, because $p(t)=\alpha-\beta q^c\geq 0$. It follows that $q^c(t)\leq \frac{\alpha}{\beta}$. Therefore, we assume that the set of cartel control parameters $q^c$ belongs to the interval
\begin{equation}\label{odcinek_zbior}
    q^c \in \Big[\frac{\alpha-b}{\beta},\frac{\alpha}{\beta}\Big].
\end{equation}

In a situation where
\begin{equation*}\label{dolny_kraniec_przedzialu}
    q^c(t)=\frac{\alpha-b}{\beta},
\end{equation*}
we say that the cartel's control function equals the lower bound of the interval. Analogously, we can define the situation in which the control function is equal to the upper bound of the interval. For it, the following condition is satisfied:
\begin{equation*}\label{gorny_kraniec_przedzialu}
q^c(t)=\frac{\alpha}{\beta}.
\end{equation*}
In the further part of the study, we justify that such a level of control cannot be optimal.

First, we demonstrate that the set of optimal control parameters in this phase does not belong to the interior of the interval $[\frac{\alpha-b}{\beta},\frac{\alpha}{\beta}]$, thus it belongs to the boundary of the interval, i.e., it equals either the lower or upper bound of the interval on a set of positive measure. Then, we show that in this phase, the optimal control of the cartel \textit{almost} always takes values equal to the lower bound of the interval.

\begin{lemma}\label{l_boundary values}
    The optimal control of the cartel in phase $L$ takes boundary values, i.e. the set of optimal control parameters does not lie within the interior of the interval $[\frac{\alpha-b}{\beta},\frac{\alpha}{\beta}]$.
\end{lemma}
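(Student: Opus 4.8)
The plan is to read the optimal control off the shape of the cartel's Hamiltonian on the feasible set \eqref{odcinek_zbior}, exploiting that in this phase the fringe is absent. First I would specialize \eqref{eq_hamiltonian_cartel}: since only the cartel extracts, $\textup{JSRF}\equiv 0$, so $H^c$ collapses to the quadratic $H^c(t,q^c)=e^{-rt}(\alpha-k^c-\beta q^c)q^c-\gamma^c q^c$, where $\gamma^c$ is the constant, strictly positive shadow value of the cartel's stock (constancy is Proposition \ref{wniosek_stalosc_gammy}; positivity is because the remaining stock commands a strictly positive rent). The structural fact I would lean on is $\partial^2_{q^c}H^c=-2\beta e^{-rt}<0$, so $H^c(t,\cdot)$ is strictly concave and, by the maximization condition \eqref{argmax_u}, for a.e.\ $t$ the optimal $q^c_*(t)$ is the unique maximizer of $H^c(t,\cdot)$ over $\left[\frac{\alpha-b}{\beta},\frac{\alpha}{\beta}\right]$, namely the projection onto that interval of the stationary point $\tilde q^c(t)=\frac{\alpha-k^c-\gamma^c e^{rt}}{2\beta}$.

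The substance of the lemma is to show that this stationary point never lies strictly inside the interval, so that the maximizer is pinned to an endpoint. For this I would evaluate the marginal Hamiltonian at the lower endpoint $q^c=\frac{\alpha-b}{\beta}$ (the control at which the price equals $b$), getting $\partial_{q^c}H^c=e^{-rt}(2b-\alpha-k^c)-\gamma^c$. Since $\gamma^c>0$ and $2b-k^c\le\alpha$ gives $2b-\alpha-k^c\le 0$, this derivative is $\le-\gamma^c<0$; by strict concavity it stays negative throughout $\left[\frac{\alpha-b}{\beta},\frac{\alpha}{\beta}\right]$, so $H^c(t,\cdot)$ is strictly decreasing and its maximum sits at the left endpoint $\frac{\alpha-b}{\beta}$ for a.e.\ $t$. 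Thus $q^c_*$ takes the boundary value $\frac{\alpha-b}{\beta}$ and in particular never lies in the interior, which is the claim; the same sign computation rules out the upper endpoint $\frac{\alpha}{\beta}$, where the price, and hence the instantaneous profit, would be negative.

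The hard part is exactly the positioning of $\tilde q^c(t)$ relative to \eqref{odcinek_zbior}, controlled by the endpoint-derivative sign, and it hinges on two facts: that $\gamma^c$ is a positive constant, and that the price cap forces $q^c\ge\frac{\alpha-b}{\beta}$ (i.e.\ $p\le b$). I would flag that the clean ``always at the lower boundary'' conclusion leans on the regime $2b-k^c\le\alpha$ that reappears in Proposition \ref{stw_o_ograniczeniu_wszedzie_brzegowa}: outside that regime the stationary point can re-enter the interval for small $t$, yielding an interior (price below $b$) segment, which is precisely the refinement carried by the threshold $\overline t$ in Proposition \ref{stw_o_brzegowym_ster_opt}. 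The boundary behaviour near $T^c$ is in any event forced by the transversality condition \eqref{max_zero}, which pins the maximized Hamiltonian to zero at $T^c$ and yields $q^c_*(T^c)=\frac{\alpha-b}{\beta}$ and $\gamma^c=(b-k^c)e^{-rT^c}$.
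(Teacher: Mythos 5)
Your argument is not the paper's, and it has a genuine logical gap: it runs on two inputs that are not available at the point where this lemma sits. First, you use $\gamma^c>0$, justifying it only by the informal remark that the stock ``commands a strictly positive rent.'' In the paper the sign of $\gamma^c$ is \emph{derived} from the boundary behaviour at $T^c$: only after one knows that the maximizer at $T^c$ is $\frac{\alpha-b}{\beta}$ does the transversality condition \eqref{max_zero} give $\gamma^c=(b-k^c)e^{-rT^c}>0$. Using $\gamma^c>0$ to prove the lemma is therefore circular within the paper's development (a non-circular proof of $\gamma^c>0$ via an envelope/value-function argument would be possible, but you would have to supply it). Second, you import the hypothesis $2b-k^c\le\alpha$, which the lemma does not carry; it is the extra assumption of Proposition \ref{stw_o_ograniczeniu_wszedzie_brzegowa}. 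Without it the interior stationary point $\frac{\alpha-k^c}{2\beta}-\frac{\gamma^c}{2\beta}e^{rt}$ can lie inside $\left[\frac{\alpha-b}{\beta},\frac{\alpha}{\beta}\right]$ for small $t$ --- exactly the situation Proposition \ref{stw_o_brzegowym_ster_opt} allows before $\overline t$ --- so your ``endpoint derivative is negative for all $t$'' computation does not establish the lemma in the generality in which it is stated and used.

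Your closing sentence, that transversality ``in any event'' yields $q^c_*(T^c)=\frac{\alpha-b}{\beta}$, is precisely the unproved crux: deriving the boundary value at $T^c$ from \eqref{max_zero} \emph{is} the paper's proof. The paper argues by contradiction at $T^c$: if the maximizer there were interior, the first-order condition \eqref{argmax_u} would place it at the vertex $\frac{\alpha-k^c}{2\beta}-\frac{\gamma^c}{2\beta}e^{rT^c}$ of the concave quadratic $H^c$, while $\max_{q^c}H^c(T^c,\cdot)=0$ forces the maximizer to be a root of $H^c$, i.e.\ $0$ or $\frac{\alpha-k^c}{\beta}-\frac{\gamma^c}{\beta}e^{rT^c}$; since the vertex is half the nonzero root, the two requirements are incompatible unless both equal $0$, which lies outside the admissible interval. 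This argument needs neither the sign of $\gamma^c$ nor $2b-k^c\le\alpha$, which is why the lemma can precede both. To repair your write-up, either carry out that vertex-versus-root computation at $T^c$, or first prove $\gamma^c>0$ independently and state explicitly that your global ``always at the lower endpoint'' conclusion holds only under the additional hypothesis $2b-k^c\le\alpha$ (where it reproves Proposition \ref{stw_o_ograniczeniu_wszedzie_brzegowa} rather than the lemma). A minor slip: at $q^c=\frac{\alpha}{\beta}$ the price is zero, not negative; it is the instantaneous profit $-k^c\frac{\alpha}{\beta}$ that is negative.
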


\begin{proof}
We first assume that the optimal control of the cartel does not take boundary values for each $t$ in the described phase, which means that it lies within the interval $q^c \in (\frac{\alpha-b}{\beta},\frac{\alpha}{\beta})$. We check if such a situation is possible. Based on equation \eqref{max_zero} in Theorem \ref{ZMP_zwiazane}: $\max_{q^c}H^c(T^c,S^c_*(T^c),q^c,\gamma^c_*(T^c))=0$, meaning that for the optimal $q^c$
\begin{equation*}\label{ham_Zero}
    (\alpha-k^c-\beta q^c)q^ce^{-rT^c}-q^c\gamma^c=0,
\end{equation*}
which implies $\hat{q}^c(T^c)=0$ or $\hat{q}^c(T^c)=\frac{\alpha-k^c}{\beta}-\frac{\gamma^c}{\beta}e^{rT^c}$. The first case is impossible, because it does not satisfy the constraint $q^c(T^c) \in (\frac{\alpha-b}{\beta},\frac{\alpha}{\beta})$. Hence, $\hat{q}^c(T^c)=\frac{\alpha-k^c}{\beta}-\frac{\gamma^c}{\beta}e^{rT^c}$. 

Notice that by the necessary condition for maximizing the Hamiltonian function, given by formula \eqref{argmax_u} from Theorem \ref{ZMP_zwiazane}, we obtain that at points $q^c$, such that $q^c \in (\frac{\alpha-b}{\beta},\frac{\alpha}{\beta})$:
\begin{equation}\label{argmax_diff_niebrzeg_1}
    \frac{\partial H^c}{\partial q^c}=(\alpha-k^c-2\beta q^c)e^{-rt}-\gamma^c=0.
\end{equation}
We solve the equation \eqref{argmax_diff_niebrzeg_1} with respect to $q^c$ and we obtain
\begin{equation}\label{q^c_dowod_brzegowosci}
   q^c(t)=\frac{\alpha-k^c}{2\beta}-\frac{\gamma^c}{2\beta}e^{rt}  \quad \text{ for } t<T^c.
 \end{equation}

To complete the proof, we want to show that when taking the limit $t \rightarrow T^c$ there is a contradiction between the condition \eqref{argmax_u} from Theorem \ref{ZMP_zwiazane} and the condition \eqref{max_zero}. Notice that the control $q^c$ is almost everywhere given by the formula \eqref{argmax_u}, which can be expressed as
\begin{equation}\label{arg_max_sterow}
    q^c_{\max}(t):=\Argmax_{q^c \in [\frac{\alpha-b}{\beta},\frac{\alpha}{\beta}]}H^c=\min\Big(\max\big(\frac{\alpha-k^c}{2\beta}-\frac{\gamma^c}{2\beta}e^{rt},\frac{\alpha-b}{\beta}\big),\frac{\alpha}{\beta}\Big),
\end{equation}
which, for points within the interior of the considered interval, is identical to $q^c(t)$ given by equation \eqref{q^c_dowod_brzegowosci}. The function $q^c_{\max}(t)$ is continuous in $T^c$.

Define the set $\mathbb{T}$ as follows:
\begin{equation*}
\mathbb{T} \text{ -- the set of points where $q^c(t)$ is given by equation \eqref{arg_max_sterow}}.
\end{equation*}

Notice that $\mathbb{T}$ is a set of full measure. Without loss of generality, assume that $T^c \in \mathbb{T}$. Such an assumption is justified, because any potential discontinuity point of cartel's control (i.e. $T^c$) does not affect the value of the integral expressed by equation \eqref{eq_kartel_calka}, which represents the discounted profit.

Consider $\lim_{t_n \rightarrow T^c}q^c(t)$, where $(t_n)_{n \in \mathbb{N}}$ is a sequence of points that belong to the set $\mathbb{T}$. In other words, $(t_n)_{n \in \mathbb{N}}$ is a sequence of points for which $q^c(t)$ is expressed by formula \eqref{arg_max_sterow}. We obtain
\begin{equation*}
    \lim_{t_n \rightarrow T^c}q^c(t_n)=q_{\max}^c(T^c)=\frac{1}{2}\hat{q}^c(T^c)\neq \hat{q}^c(T^c),
\end{equation*}
This implies a contradiction between conditions \eqref{argmax_u} and \eqref{max_zero}. Therefore, the optimal control does not come from the interior of the interval $[\frac{\alpha-b}{\beta},\frac{\alpha}{\beta}]$. Thus, we have proven that the optimal control denoted as $q_*^c$ takes boundary values. 
\end{proof}

\begin{corollary}\label{wniosek_o_zbiorze_miary}
    The optimal control of the cartel takes boundary values on a set of measure greater than zero.
\end{corollary}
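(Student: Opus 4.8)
The plan is to upgrade the qualitative conclusion of Lemma \ref{l_boundary values} (the optimal control is never interior) to a quantitative one by exploiting the continuity of the argmax map $q^c_{\max}$ constructed in \eqref{arg_max_sterow}. Since the optimal control agrees almost everywhere with the continuous function $q^c_{\max}(t)$, it suffices to locate one point of the phase at which $q^c_{\max}$ attains a boundary value \emph{strictly} (i.e. where the unconstrained maximizer lies strictly outside the admissible interval), and then to spread this behavior to a whole subinterval by continuity.

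First I would pin down the costate $\gamma^c$, which is constant in the phase. Evaluating the max-zero condition \eqref{max_zero} at the lower endpoint $q^c=\frac{\alpha-b}{\beta}$ forces $(b-k^c)e^{-rT^c}-\gamma^c=0$, hence $\gamma^c=(b-k^c)e^{-rT^c}>0$, using $b>k^c$. With this value in hand I would substitute into the interior candidate $\frac{\alpha-k^c}{2\beta}-\frac{\gamma^c}{2\beta}e^{rt}$ and evaluate it at $t=T^c$, obtaining $\frac{\alpha-b}{2\beta}$. Because $\alpha>b$ and $\beta>0$, this quantity is strictly smaller than the lower endpoint $\frac{\alpha-b}{\beta}$ of the interval \eqref{odcinek_zbior}. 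Thus at $t=T^c$ the inner formula lies strictly below the admissible set, so the clamped value satisfies $q^c_{\max}(T^c)=\frac{\alpha-b}{\beta}$, with the clamping in \eqref{arg_max_sterow} strictly active.

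Next I would invoke continuity. The map $t\mapsto \frac{\alpha-k^c}{2\beta}-\frac{\gamma^c}{2\beta}e^{rt}$ is continuous, so the strict inequality $\frac{\alpha-k^c}{2\beta}-\frac{\gamma^c}{2\beta}e^{rt}<\frac{\alpha-b}{\beta}$ persists on a left-neighborhood $(T^c-\delta,T^c]$ for some $\delta>0$. On that neighborhood the outer $\max$ in \eqref{arg_max_sterow} selects the lower bound and the outer $\min$ is inactive, so $q^c_{\max}(t)=\frac{\alpha-b}{\beta}$ throughout. Since $(T^c-\delta,T^c]$ has positive Lebesgue measure and the optimal control coincides with $q^c_{\max}$ almost everywhere, the optimal control equals the boundary value $\frac{\alpha-b}{\beta}$ on a set of positive measure, which is exactly the assertion.

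The only delicate step is the computation of $\gamma^c$ and the resulting strict inequality at $T^c$: everything hinges on the sign and magnitude of $\gamma^c$, which the max-zero condition \eqref{max_zero} fixes exactly through $b>k^c$. Once the strict clamping at $T^c$ is secured, the passage to a positive-measure set is a routine continuity argument and presents no real obstacle.
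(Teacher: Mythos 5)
Your proof is correct, but it takes a more direct route than the paper. The paper's own proof of Corollary \ref{wniosek_o_zbiorze_miary} does not compute $\gamma^c$ at all: it leans on Lemma \ref{l_boundary values} (the control cannot be interior throughout the phase) together with the fact that the unconstrained maximizer $\frac{\alpha-k^c}{2\beta}-\frac{\gamma^c}{2\beta}e^{rt}$ is continuous and monotone (because $\gamma^c$ is constant), so that once the clamped formula \eqref{arg_max_sterow} hits a boundary at some $\overline{t}<T^c$ it stays constant on $[\overline{t},T^c]$, a set of positive measure; the identification of \emph{which} boundary, and the value of $\gamma^c$, are deferred to Proposition \ref{stw_o_brzegowym_ster_opt} and the subsequent computation in the main text. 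You instead front-load the costate computation, evaluate the interior candidate at $T^c$ to get $\frac{\alpha-b}{2\beta}<\frac{\alpha-b}{\beta}$, and spread the strict clamping to a left-neighborhood of $T^c$ by continuity. This buys you more: you bypass Lemma \ref{l_boundary values} entirely and already identify the lower boundary as the active one, which the paper only establishes later. The one delicate point is your claim that the max-zero condition ``evaluated at the lower endpoint'' \emph{forces} $\gamma^c=(b-k^c)e^{-rT^c}$ --- strictly speaking, plugging the lower endpoint into the Hamiltonian only yields $H^c\bigl(\tfrac{\alpha-b}{\beta}\bigr)\le \max H^c=0$, i.e.\ the inequality $\gamma^c\ge (b-k^c)e^{-rT^c}>0$, unless you first know the maximum is attained there (which is essentially what is being proved). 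Fortunately this is harmless: the inequality alone gives $\frac{\alpha-k^c}{2\beta}-\frac{\gamma^c}{2\beta}e^{rT^c}\le\frac{\alpha-b}{2\beta}<\frac{\alpha-b}{\beta}$, so your strict-clamping and continuity argument goes through unchanged. I would recommend stating that step as an inequality to avoid any appearance of circularity.
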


\begin{proof}
 The optimal control of the cartel is at the boundary on a set of measure greater than zero, because the control function $q^c(t)$ given by \eqref{q^c_dowod_brzegowosci} is continuous within the interval of its values and monotonic, since the function $\gamma^c$ is constant (see: Appendix \ref{appendix_tech_proofs}, Proposition \ref{wniosek_stalosc_gammy}). Therefore, at some point $\overline{t}$ such that $T^f\leq\overline{t}<T^c$ control parameter $q^c$ reaches the boundary. Consequently, $q^c_{\max}$ given by \eqref{arg_max_sterow} becomes constant after this moment.
\end{proof}

\begin{proof} \emph{of Proposition \ref{stw_o_brzegowym_ster_opt}}
    Consider the lower bound of the interval $[\frac{\alpha-b}{\beta},\frac{\alpha}{\beta}]$. Since the optimal control of the cartel neither reaches the upper bound nor is interior, by Theorem \ref{tw_o_istnieniu} the optimal control function of the cartel exists and is equal to the lower bound almost everywhere on the set $[\overline{t},T^c]$. This is a consequence of Corollary \ref{wniosek_o_zbiorze_miary}. Therefore, let the optimal control satisfy $\lim_{\tilde{t}_n \rightarrow T^{c-}} q^c(\tilde{t}_n)=q^c_{\max}(T^c)=\frac{\alpha-b}{\beta}$ for a sequence of points $\tilde{t}_1,\dots,\tilde{t}_n, T^c \in \mathbb{T}$ and $q^c_{\max}$ is given by \eqref{arg_max_sterow}. The control $q^c(t)$ is almost everywhere equal to $q^c_{\max}(t)=\frac{\alpha-b}{\beta}$ for $t$ in the set $[\overline{t},T^c]$. Thus, the control $q^c_*(t)=\frac{\alpha-b}{\beta}$ is optimal almost everywhere on the set $[\overline{t},T^c]$. The fact that equality holds on a nonempty but measure-zero set does not affect the value of the discounted net profit expressed through the integral \eqref{eq_kartel_calka}.
\end{proof}

\subsection{Proof of Proposition \ref{stw_o_ograniczeniu_wszedzie_brzegowa}}\label{ss_proofs_stw_o_ogr_wszedzie_brzegowa}
Since $\gamma^c=(b-k^c)e^{-rT^c}>0$ (as $\gamma^c$ is constant), the control computed from equation \eqref{argmax_diff_niebrzeg_1} of the form $q^c(t)=\frac{\alpha-k^c}{2\beta}-\frac{\gamma^c}{2\beta}e^{rt}$ is a decreasing function of $t$. Therefore, at $t=T^f$ this function reaches its maximum. Hence, for the cartel's control function to take boundary values everywhere, the condition that needs to be satisfied is:
\begin{equation*}
    \frac{\alpha-k^c}{2\beta}-\frac{\gamma^c}{2\beta}e^{rt} \leq \frac{\alpha-b}{\beta},
\end{equation*}
where equality holds in the case of $t=T^f$. After simple transformations, we obtain:
$\frac{\gamma^c}{2\beta}e^{rt} \geq \frac{2b-k^c-\alpha}{2\beta}$. Since $\gamma^c=(b-k^c)e^{-rT^c}$, we have $b\big(2-e^{r(t-T^c)}\big)+k^c(e^{r(t-T^c)}-1)-\alpha \leq 0$. Note that $b\big(2-e^{r(t-T^c)}\big)+k^c(e^{r(t-T^c)}-1)-\alpha \leq 2b-k^c-\alpha$, because we have assumed in Section \ref{s_formulation} that $k^c<b$. If we assume the condition $2b-k^c-\alpha\leq 0$, then the cartel's control function in the discussed phase will always take the lower boundary value. Consequently, the price is $b$.  

Now we calculate the optimal extraction time for the cartel (denoted by $\hat{T}^c$). We need to compute the integral:
\begin{equation*}
    \int_{T^f}^{\hat{T}^c}\frac{\alpha-b}{\beta}dt=S^c_0.
\end{equation*}
Hence, $\frac{\alpha-b}{\beta}t\Big|_{T^f}^{\hat{T}^c}=S_0^c$ and after simple transformations, we find $\hat{T}^c=T^f+\frac{S_0^c \cdot \beta}{\alpha-b}$.
\qed

\subsubsection{The conclusion arising from the form of optimal controls in the final phases}\label{s_conclusion_arising}

So far, in Proposition \ref{stw_zal_tw_o_istnieniu}, we have verified the validity of all the assumptions of Theorem \ref{zmp_zwykla} (maximum principle) and Theorem \ref{tw_o_istnieniu} (existence of optimal control) for the problem with a fixed terminal time $T_1$ and a free terminal state (described by equations \eqref{Bolza_funkcjonal}-\eqref{bolza_zmianastanu}). We should verify the continuity condition and belonging to $C^1$ class with respect to $S^c$ of the terminal payoff function $g^c$ given by equation \eqref{wyplata_konc_strategia2}. 

\begin{corollary}\label{wniosek_o_spelnieniu_zalozen_w_PMP}
    The assumptions of Theorem \ref{zmp_zwykla} (maximum principle) and Theorem \ref{tw_o_istnieniu} (existence of optimal control) are satisfied for the problem with a fixed terminal time $T_1$ and a free terminal state, described by \eqref{Bolza_funkcjonal}-\eqref{bolza_zmianastanu}, without switching between phases. 
\end{corollary}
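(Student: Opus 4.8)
The plan is to observe that, by Proposition~\ref{stw_zal_tw_o_istnieniu}, every hypothesis of Theorems~\ref{zmp_zwykla} and~\ref{tw_o_istnieniu} has already been checked for the fixed-horizon problem \eqref{Bolza_funkcjonal}--\eqref{bolza_zmianastanu}, with the single exception of the regularity of the terminal payoff $g^c$ given by \eqref{wyplata_konc_strategia2}: one still must show that $g^c$ is continuous in all its arguments and of class $C^1$ with respect to $S^c$. I would therefore reduce the entire corollary to this one regularity statement and dispatch the remaining assumptions by reference.

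The only delicate feature of $g^c(T_1,S^c)$ is that $S^c$ enters both through the integrand, via the optimal control $\overline{q}^c$ of the succeeding phase, and, more subtly, through the upper limit of integration $T^c$, which is itself pinned down by the exhaustion constraint on the cartel's stock. The key simplification I would exploit is that, by Proposition~\ref{stw_o_ograniczeniu_wszedzie_brzegowa} (phase $L$) and Proposition~\ref{prop_k_f_boundary} (phase $C$), the optimal control in the phase following $T_1$ is \emph{constant} in $t$, equal to $\tfrac{\alpha-b}{\beta}$ or $\tfrac{\alpha-k^f}{\beta}$ respectively. In particular $\overline{q}^c$ is continuous, which settles the integrand and removes the only obstruction flagged in Proposition~\ref{stw_zal_tw_o_istnieniu}.

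With $\overline{q}^c$ constant, the exhaustion constraint $\int_{T_1}^{T^c}\overline{q}^c\,dt=S^c$ can be solved explicitly, giving $T^c=T_1+\tfrac{S^c\beta}{\alpha-b}$ (resp.\ with $\alpha-k^f$ in the denominator), so that $T^c$ is an affine, hence $C^\infty$, function of $(T_1,S^c)$. Substituting this into \eqref{wyplata_konc_strategia2} and performing the elementary integral of $e^{-rt}$, exactly as in the derivation of \eqref{pi^c}, expresses $g^c$ as a fixed combination of exponentials evaluated at the affine quantity $T^c(T_1,S^c)$. Such a composition is smooth in both variables; in particular it is continuous jointly and $C^1$ in $S^c$, with $\partial g^c/\partial S^c$ obtained by the chain rule or, equivalently, by the Leibniz rule for a variable upper endpoint.

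The step I expect to be the genuine obstacle, and the reason this is stated as a separate corollary rather than folded into Proposition~\ref{stw_zal_tw_o_istnieniu}, is precisely the implicit dependence of $T^c$ on $S^c$ through the upper limit of the integral. For a general, time-varying final-phase control one would have to differentiate an integral whose endpoint is an implicitly defined function of the state, and even establishing continuity of $T^c(S^c)$ would require an inverse- or implicit-function argument. The constancy of $\overline{q}^c$ in phases $C$ and $L$, established earlier, collapses this difficulty to a one-line affine formula; I would make a point of invoking Propositions~\ref{stw_o_ograniczeniu_wszedzie_brzegowa} and~\ref{prop_k_f_boundary} explicitly, since they are exactly what render the smoothness of $g^c$ transparent rather than merely plausible.
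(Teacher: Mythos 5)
Your proposal is correct and follows essentially the same route as the paper: both reduce the corollary to the regularity of the terminal payoff $g^c$, both exploit the constancy of the optimal control in the succeeding phase to conclude that $T^c$ depends affinely on $S^c(T^f)$, and both then deduce joint continuity and $C^1$-dependence on $S^c$ (the paper via the Fundamental Theorem of Calculus applied to the integral with a linear upper limit, you via the equivalent explicit evaluation of the integral as in the derivation of \eqref{pi^c}). Your additional remark covering phase $C$ with $\tfrac{\alpha-k^f}{\beta}$ is a harmless slight generalization of the paper's argument, which treats only phase $L$.
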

\begin{proof}
For the sake of brevity, we denote by $g^c$ the payoff function in the final phase described in Section \ref{faza_koncowa_1}.
The function $g^c$ is of class $C^1$ with respect to $S^c$. We calculate the value of $S^c(T^f)$ by \eqref{zmiana_st_c} and the fact that at $S^c(T^c)=0$:
\begin{equation*}
    S^c(T^f)=\int_{T^f}^{T^c}\frac{\alpha-b}{\beta}dt,
\end{equation*}
where $q^c(t)_*=\frac{\alpha-b}{\beta}$ is the optimal control calculated in Section \ref{faza_koncowa_1} for the final phase after depletion of the fringe resource. Hence, $S^c(T^f)=\frac{\alpha-b}{\beta}(T^c-T^f)$, indicating that $T^c$ depends linearly on $S^c(T^f)$. Consequently, by the Fundamental Theorem of Calculus, the function $g^c$ is of class $C^1$ with respect to $S^c(T^f)$ and thus, with respect to $S^c$ over its domain. 

Now, we need to prove that $g^c$ is a continuous function with respect to all variables. We have shown that in the final phases, the optimal control computed in Section \ref{faza_koncowa_1} is almost everywhere continuous on the interval $[T^f,T^c]$. Hence, the terminal payoff function $g^c$ is continuous because it is a continuous integral with respect to $T^c$ and therefore with respect to $S^c(T^f)$ and in general, with respect to $S^c$.

Therefore, we have verified that the assumptions of Theorems \ref{zmp_zwykla} and \ref{tw_o_istnieniu} are satisfied.
\end{proof}

\subsection{Proof of Proposition \ref{stw_o_ograniczeniu_zasobow}}\label{ss_proofs_stw_o_ograniczeniu_zasobow}

We can assume that the cartel will never deplete its resources first, as long as it allows the fringe to exploit theirs. In other words, such a scenario will never occur as long as $T^f<T^c$, where $T^f$ is the depletion time of the competitors' resources. Now we want to impose certain constraints on the model parameters. First, we calculate the time $T^f$. It suffices to compute the integral:
\begin{equation*}
    \int_0^{T^f}M^f ds=S_0^f,
\end{equation*}
which yields
\begin{equation}\label{teef}
    T^f=\frac{S_0^f}{M^f}.
\end{equation}
We can estimate the depletion time of the cartel's resources from below using the auxiliary constant $T^c_{aux}$, where $T^c_{aux}$ is the depletion time for the maximum control from the constraint \eqref{scen3_przedzial_par_ster} up to time $T^f$ and then according to the optimal control after the depletion of resources of the fringe. Assume that $T^f<T^c_{aux}$ (we want to impose the strongest possible constraint on the model parameters). We calculate this constant. Based on the constraint \eqref{scen3_przedzial_par_ster}, we notice that $q^c(t) < \frac{\alpha-k^f-M^f\beta}{\beta}$ in the discussed phase. Moreover, based on the result obtained in Subsection \ref{faza_koncowa_1}, the optimal control $q^c_*(t)=\frac{\alpha-b}{\beta}$ when $t\geq T^f$. Therefore,
\begin{equation}\label{t_pom}
    \int_0^{T^c}q^c(s)ds<\int_0^{T^f}\frac{\alpha-k^f-M^f\beta}{\beta}ds+\int_{T^f}^{T_{aux}^c}\frac{\alpha-b}{\beta}ds=S_0^c,
\end{equation}
which implies the inequality $T^f < T^c_{aux} \leq T^c$. By calculating the integrals on the right-hand side of the inequality \eqref{t_pom} using substitution \eqref{teef}, we obtain 
\begin{equation*}
  T^c_{aux}= \frac{\beta}{\alpha-b}\Big(S_0^c+\frac{k^f+M^f\beta-b}{\beta}\cdot\frac{S_0^f}{M^f}\Big).  
\end{equation*}
Based on the inequality $\frac{S_0^f}{M^f}=T^f<T^c_{aux}$ , we obtain:
\begin{equation}\label{nier_na_tpom}
  \frac{S_0^f}{M^f} < \frac{S_0^c \beta}{\alpha-\beta M^f - k^f}.  
\end{equation}
By assuming the condition \eqref{nier_na_tpom} restricting $S^c_0$ and $S^f_0$, we ensure that the cartel will never deplete its resources before competitors.
\qed

\subsection{Proof of Proposition \ref{stw_o_gamma^c}}\label{ss_proofs_stw_o_gamma_c}
To find the expression for $\gamma^c$, we calculate the partial derivative with respect to $S^c_{T^f}$ of the final payment function given by \eqref{pi^c}. Therefore, we compute $\frac{\partial \pi^c}{\partial S^c_{T^f}}$, which corresponds to the terminal condition \eqref{war_kon_gamma} in Theorem \ref{zmp_zwykla}. As a result, we obtain
\begin{equation*}
    \frac{\partial \pi^c}{\partial S^c_{T^f}}=\frac{b-k^c}{r}\cdot\frac{\alpha-b}{\beta}\cdot\Big(\frac{-r\beta}{\alpha-b}\Big)\cdot\big(-e^{-rT^c}\big),
\end{equation*}
    which implies $\frac{\partial \pi^c}{\partial S^c_{T^f}}=(b-k^c)e^{-rT^c}$. Therefore, based on Theorem \ref{zmp_zwykla}, we have
    \begin{equation}\label{gamma_c}
\gamma^c(T^f) = (b - k^c)e^{-r(T^f + T^c)}.
\end{equation}\qed

\subsection{Proof of Proposition \ref{stw_o_nierownosci_z_dyskontem}}\label{ss_proofs_stw_o_nierownosci_z_dyskontem}
We want the optimal control in this phase, given by \eqref{opt_ster_trojka} to be greater than the lower bound given in \eqref{scen3_przedzial_par_ster}, i.e., 
    \begin{equation*}
    \frac{\alpha-k^c-M^f\beta}{2\beta}-\frac{\gamma^c}{2\beta}e^{rt}>\frac{\alpha-b-M^f\beta}{\beta}.
    \end{equation*}
We take the limit of the left side of the inequality as $t \rightarrow T^f$. As a result of straightforward transformations and after substituting $\gamma^c$ from \eqref{gamma_c}, we obtain
    \begin{equation*}
    -\frac{b-k^c}{2\beta}e^{-rT^c}>\frac{\alpha+k^c-2b-M^f\beta}{2\beta}.
\end{equation*}
Hence,
\begin{equation*}
    b\big(2-e^{-rT^c}\big)>\alpha-M^f\beta+k^c\big(1-e^{-rT^c}\big)
\end{equation*}
and in this case, the optimal control $q^c_*(t)$ for this phase is greater than $\frac{\alpha-b-M^f\beta}{\beta}$ for every $t<T^f$.
\qed

\subsection{Proof of Proposition \ref{stw_o_skonczonsci_optymalnego_momentu_wyczerpania}}\label{ss_proofs_stw_o_skonczonosci_optymalnego_momentu}
After expressing equation \eqref{tece} explicitly, by substituting $s$ from \eqref{es} and transferring the right-hand side of the equation to the left, we obtain:
    \begin{equation}\label{rownanie_z_tece}
T^c-\frac{S_0^c\cdot\beta}{\alpha-b}-\frac{\beta}{\alpha-b}\cdot\Big(\frac{\alpha+k^c+M^f\beta-2b}{2\beta}\cdot T^f+\frac{(b-k^c)}{2\beta r}\cdot\Big(e^{rT^f}-1\Big)e^{-r(T^c+T^f)}\Big)=0.
\end{equation}
Define the function on the left-hand side of equation \eqref{rownanie_z_tece} as $\tau^c(t)$. It is continuous with respect to $t$ for $t>T^f$. By separating the terms $\tau^c$ that contain $t$ from the rest, we get:
\begin{equation*}
\begin{aligned}
\tau^c(t) =
& \; t 
- \frac{\beta}{\alpha - b} \cdot \frac{b - k^c}{2 \beta r} 
  \cdot \Bigl(e^{-r T^f} - 1 \Bigr) e^{-r (t + T^f)} \\[1mm]
& - \frac{S_0^c \cdot \beta}{\alpha - b}
- \frac{\beta}{\alpha - b} \cdot \Biggl( \frac{\alpha + k^c + M^f \beta - 2b}{2 \beta} \cdot T^f \Biggr).
\end{aligned}
\end{equation*}

We show that $\lim_{t\rightarrow T^{f+}} \tau^c(t)<0$. We approach with $t$ towards $T^f$ from the right, because we have assumed $t>T^f$. After straightforward calculations, we obtain:

\begin{equation}\label{limes_dowod_o_istnieniu}
\lim_{{t\rightarrow T^{f+}}} \tau^c(t)=-\frac{S_0^c\cdot\beta}{\alpha-b}-\frac{\beta}{\alpha-b}\cdot\frac{(b-k^c)}{2\beta r}\cdot\Big(e^{-rT^f}-1\Big)\cdot e^{-2rT^f}+\frac{\alpha-k^c-M^f\beta}{2(\alpha-b)}\cdot T^f.
\end{equation}
After simplifying the first and third terms of the right-hand side of equation \eqref{limes_dowod_o_istnieniu} and substituting $T^f$ with the exhaustion time of the reserves by the fringe given by \eqref{teef}, we obtain 
\begin{equation*}
\begin{aligned}
\lim_{t \rightarrow T^{f+}} \tau^c(t) =
& \; \frac{1}{2(\alpha - b)} 
    \cdot \Biggl( \frac{\alpha - k^c - M^f \beta}{2(\alpha - b)} \cdot \frac{S_0^f}{M^f} - 2 S_0^c \beta \Biggr) \\[1mm]
& - \frac{\beta}{\alpha - b} \cdot \frac{b - k^c}{2 \beta r} 
    \cdot \Bigl(e^{-r T^f} - 1 \Bigr) \cdot e^{-2 r T^f}.
\end{aligned}
\end{equation*}

Notice that the second term in the above sum, denoted as $\tau^c_2$ and given by $\tau^c_2=-\frac{\beta}{\alpha-b}\cdot\frac{(b-k^c)}{2\beta r}\cdot(e^{-rT^f}-1)\cdot e^{-2rT^f}$ is positive, but very small. It is a product of exponential functions with the base $\exp$ and the argument of the form $\text{const}_1 \cdot T^f$, where $\text{const}_1$ is a negative constant. 

On the other hand, the first term, denoted as $\tau^c_1$ and given by $\tau^c_1=\frac{1}{2(\alpha-b)}\cdot(\frac{\alpha-k^c-M^f\beta}{2(\alpha-b)}\cdot\frac{S_0^f}{M^f}-2S^c_0\beta)$ is significantly less than zero. Notice that
\begin{equation*}
\frac{\alpha-k^c-M^f\beta}{2(\alpha-b)}\cdot\frac{S_0^f}{M^f}-2 S^c_0 \beta<0,
\end{equation*}
which implies
\begin{equation}\label{nierownosc_frac_S_0^f}
\frac{S_0^f}{M^f}<\frac{4(\alpha-b)\cdot S_0^c \beta}{\alpha-k^c-M^f\beta}.
\end{equation}
In Proposition \ref{stw_o_ograniczeniu_zasobow}, we have assumed that $\frac{S_0^f}{M} < \frac{S_0^c \beta}{\alpha-\beta M^f - k^f}$. If
\begin{equation}\label{nierownosc_znacznie_mniejsza}
\frac{S_0^c \beta}{\alpha- k^f-M^f\beta}\ll\frac{4(\alpha-b)\cdot S_0^c \beta}{\alpha-k^c-M^f\beta},
\end{equation}
meaning the left side of the inequality is much smaller than the right side, then the inequality \eqref{nierownosc_frac_S_0^f} will be satisfied. Indeed, transforming inequality \eqref{nierownosc_znacznie_mniejsza} yields
\begin{equation*}
\frac{4(\alpha-b)}{\alpha-k^c-M^f\beta}>\frac{1}{\alpha-k^f-M^f\beta}.
\end{equation*}
Further simplifications result in
\begin{equation*}
4(\alpha-b)(\alpha-k^f-M^f\beta)>\alpha-k^c-M^f\beta.
\end{equation*}
The term $\alpha^2$ on the left side of the above inequality is significantly larger than the remaining terms. Indeed, in Subsection \ref{faza_koncowa_1}, we assumed that it is much greater than $k^f$ and $k^c$ and at least twice as large as $b$. Therefore, inequality \eqref{nierownosc_znacznie_mniejsza} holds, and its left side is much smaller than its right side. Consequently, the term $\tau^c_1$ is significantly below zero. Thus, we conclude that
\begin{equation*}
\lim_{{t\rightarrow T^{f+}}} \tau^c(t)=\tau^c_1(T^f)+\tau^c_2(T^f)<0.
\end{equation*}
By the Darboux theorem for a continuous function $\tau^c$ defined on the closed interval $[T^f, \tilde{t}^c]$, where $\tilde{t}^c$ is any time such that $\tau^c(\tilde{t}^c)>0$ as $\tau^c$ an increasing function of $t$, there exists a point such that $\pi^c(T^c)=0$. This point is the optimal exhaustion moment for the cartel's resources because it satisfies the equation \eqref{rownanie_z_tece}.
\qed

\section{Numerical calculations for the graphical analysis section}\label{s_graphical_appendix}

We define the half-planes that divide the regions in which cartel's net profit $\pi^c_i$ is the highest from choosing a strategy $i=1,2,3$. 

\subsection{Situation where net profit obtained from ''deter'' strategy is the highest}\label{s_graphical_appendix_h}

As a result of solving the inequality $\pi^c_1(k^f,k^c)>\pi^c_2(k^f,k^c)$ with respect to $(k^f,k^c)$, we get that if:

1. $k^f \in (87.3,96.2)$, then
\begin{equation*}
\begin{aligned}
k^c \in \Biggl(
0, \; 
& \frac{
-3.35055 \cdot 10^{16} + 9.60294 \cdot 10^{14} k^f 
- 9.60294 \cdot 10^{14} e^{-\frac{145.925}{225.5 - k^f}} k^f
}{A}
\Biggr)
\end{aligned}
\end{equation*}

 \begin{equation*}
 \frac{-4.25851*10^{12} (k^f)^2 + 
 4.25851*10^{12} e^{-\frac{145.925}{225.5 - k^f}} (k^f)^2}{
A}\Big),
\end{equation*}
where
\begin{equation*}
    A=6.26389 \cdot 10^{14} - 9.60294 \cdot 10^{14} e^{-\frac{145.925}{225.5 - k^f}} 
- 4.25851 \cdot 10^{12} k^f 
+ 4.25851 \cdot 10^{12} e^{-\frac{145.925}{225.5 - k^f}} k^f,
\end{equation*}
and the upper limit of the above interval will be denoted as $h_1(k^f)$,

2. $k^f \in (96.2,100)$, then $k^c \in (0,60)$.

\subsection{Situation where net profit obtained from: the ''share'' strategy is the highest}

As a result of solving the inequality $\pi^c_2(k^f,k^c)>\pi^c_1(k^f,k^c)>\pi^c_3(k^f,k^c)$ with respect to $(k^f,k^c)$, we get that if:

1. $k^f \in (0,87.3)$, then $k^c \in (0,60)$,

2. $k^f \in (87.3,96.2)$, then 
    $k^c \in (h_1(k^f),60)$.


\bibliography{cartel_fringe_bibliography}


\end{appendices}

\end{document}